\definecolor{mygreen}{rgb}{0,.4,0}
\definecolor{myblue}{rgb}{0,0,.5}
\definecolor{mymagenta}{cmyk}{0,.6,0,0}
\renewcommand{\@asparaenum@}{%
  \expandafter\list\csname label\@enumctr\endcsname{%
    \usecounter{\@enumctr}%
    \labelwidth\z@
    \labelsep.5em
    \leftmargin\z@
    \parsep\parskip
    \itemsep\z@
    \topsep\z@
    \partopsep\parskip
    \itemindent\parindent
    \advance\itemindent\labelsep
    \def\makelabel##1{\upshape ##1}}}
\theoremstyle{plain}
\newtheorem{theorem}{Theorem}[section]
\newtheorem{proposition}[theorem]{Proposition}
\newtheorem{lemma}[theorem]{Lemma}
\newtheorem{corollary}[theorem]{Corollary}
\newtheorem{conjecture}[theorem]{Conjecture}
\newtheorem*{theorem*}{Theorem}
\theoremstyle{definition}
\newtheorem{definition}[theorem]{Definition}
\newtheorem{notation}[theorem]{Notation}
\newtheorem{example}[theorem]{Example}
\newtheorem{remark}[theorem]{Remark}
\numberwithin{equation}{section}
\newcommand{\id}{\mathrm{id}}
\newcommand{\xyinc}{\ar@{^{(}->}}
\newcommand{\xyrinc}{\ar@{_{(}->}}
\newcommand{\xyonto}{\ar@{->>}}
\newcommand{\xytwo}{\ar@{<->}}
\newcommand{\B}[1]{\mathtt{#1}} % bases elements
\DeclareMathAlphabet{\mathpzc}{OT1}{pzc}{m}{it}
\newcommand{\calA}{\mathcal{A}}
\newcommand{\beq}{\begin{equation}}
\newcommand{\eeq}{\end{equation}}
\newcommandx{\unsure}[2][1=]{\todo[linecolor=red,backgroundcolor=red!25,bordercolor=red,#1]{#2}}
\newcommandx{\change}[2][1=]{\todo[linecolor=blue,backgroundcolor=blue!25,bordercolor=blue,#1]{#2}}
\newcommandx{\info}[2][1=]{\todo[linecolor=teal,backgroundcolor=teal!25,bordercolor=teal,#1]{#2}}
\newcommandx{\improvement}[2][1=]{\todo[caption={Short note},linecolor=violet,backgroundcolor=violet!25,bordercolor=violet,size=\tiny,#1]{#2}}
\newcommandx{\thiswillnotshow}[2][1=]{\todo[disable,#1]{#2}}
\newcommand{\sk}{\operatorname{sk}}
\newcommand{\ST}{\mathsf{Sch}}
\newcommand{\T}{\mathcal{T}}
\newcommand{\A}{\mathcal{A}}
\newcommand{\NC}{\mathsf{NC}}
\newcommand\teli[1]{{\color{cyan}#1}}
\newcommand{\uno}{{\bf{1}}}
\newcommand{\E}{\mathcal{E}}
\newcommand{\PST}{\mathsf{PSch}}
\newcommand{\sS}{\mathsf{S}}
\newcommand{\tT}{\mathsf{T}}
\newcommand{\internal}{i}
\newcommand{\Abs}{\operatorname{Abs}}
\newenvironment{arb}{\begin{tikzpicture}[baseline,scale=0.5,level distance=7mm,level 1/.style={sibling distance=10mm},level 2/.style={sibling distance=5mm},level 3/.style={sibling distance=3mm},grow=down, font=\scriptsize]
\tikzstyle{ve}=[draw,circle,inner sep=1pt,fill] 
\tikzstyle{vv}=[draw,circle,inner sep=1pt] 
\tikzstyle{vee}=[minimum size=0pt ,inner sep=0pt]}{\end{tikzpicture}}
\newenvironment{arbb}{\begin{tikzpicture}[baseline,scale=0.5,level distance=7mm,level 1/.style={sibling distance=25mm},level 2/.style={sibling distance=5mm},level 3/.style={sibling distance=3mm},grow=down, font=\scriptsize]
\tikzstyle{ve}=[draw,circle,inner sep=1pt,fill] 
\tikzstyle{vv}=[draw,circle,inner sep=1pt] 
\tikzstyle{vee}=[minimum size=0pt ,inner sep=0pt]}{\end{tikzpicture}}
\newcommand{\rd}[1]{\node[ve,label=above:$#1$] {}}
\newcommand{\vb}[1]{node[ve,label=below:$#1$] {}}
\newenvironment{arbolito}{\begin{tikzpicture}[baseline={([yshift=-1.5ex]current bounding box.center)},scale=0.6,level distance=7mm,level 1/.style={sibling distance=10mm},level 2/.style={sibling distance=5mm},level 3/.style={sibling distance=3mm},grow=down, font=\scriptsize]
\tikzstyle{ve}=[draw,circle,inner sep=1pt,fill] 
\tikzstyle{vv}=[draw,circle,inner sep=1pt] 
\tikzstyle{vee}=[draw,circle,minimum size=0pt ,inner sep=0pt]}{\end{tikzpicture}}
\def\corollaa{\begin{arb}
\rd{(i;i_0)}
child{\vb{i_1}} child[dotted] child[dotted] child{\vb{i_k}};
\end{arb}}
\tikzset{
% Two node styles for game trees: solid and hollow
solid node/.style={circle,draw,inner sep=1.5,fill=black, minimum size=1pt},
hollow node/.style={circle,draw,inner sep=1.5,fill=white,  minimum size=1pt}
}
\begin{document}
% TITLE, AUTHOR'S NAME AND ADDRESS
\title[Schröder trees, antipode formulas and non-commutative probability]{Schr$\ddot{\text{O}}$der trees, antipode formulas and non-commutative probability}

\author[Adrián Celestino]{Adri\'an Celestino}
\address[A. Celestino, Y. Vargas]{Institut für Diskrete Mathematik, Technische Universität Graz, Steyrergasse 30, 8010 Graz,
Austria.}
\email{celestino@math.tugraz.at}
\urladdr{https://sites.google.com/view/adriancelestino/}

\author[Yannic Vargas]{Yannic Vargas}
%\address{Institut für Diskrete Mathematik, Technische Universität Graz, Steyrergasse 30, 8010 Graz, Austria.}
\email{yvargaslozada@tugraz.at}
\urladdr{https://yannicmath.github.io/}
\subjclass[2020]{05E99, 16T05, 16T30, 17A30, 46L53}
\keywords{combinatorial Hopf algebras, antipode, Schröder trees, non-commutative probability; non-crossing partitions, free cumulants, Boolean cumulants, monotone cumulants, free Wick polynomials}
\date{} % Date

\maketitle

% ABSTRACT
\begin{abstract}
We obtain a cancellation-free formula, represented in terms of Schröder trees, for the antipode in the double tensor Hopf algebra introduced by Ebrahimi-Fard and Patras. We apply the antipode formula in the context of non-commutative probability and recover cumulant-moment formulas as well as a new expression for Anshelevich's free Wick polynomials in terms of Schröder trees.
\end{abstract}

% TABLE OF CONTENTS

\setcounter{tocdepth}{3}

\begin{comment}
\section*{Goals}
\begin{enumerate}
    \item Proof of eq. 109 from \cite{JVNT} \teli{Solved}
    \item Proof of 
    \[\Phi^{-1}(w) = \sum_{\pi \in NC(n)} \mu_{NC(n+1)}(\pi, 1)\varphi_\pi(a)\]
    \teli{Solved}
    \item Proof of cumulant-to-moment for free, boolean, monotone using character formulas. \teli{Solved, but not for Boolean (Y: I have a draft of the proof)}
    \item Wick formula from \cite{EFPTZ} (Solved)
    \item Relation with ``The antipode of the noncrossing partition lattice'', by Ehrenborg and Happ.
\end{enumerate}
\end{comment}

\section{Introduction}
\label{sec:introduction}

In recent years, there has been a growing interest in the exploration of non-commutative analogues to probability theory, now recognized as the field of \emph{non-commutative probability} (also known as \emph{quantum probability}). Dan Voiculescu, in the 1980s, introduced the notion of \emph{free independence}, aiming to solve the problem of isomorphism between von Neumann algebras generated by free groups. Since then, \textit{free probability theory} \cite{NSp, voiculescu1992free} has encountered many applications and links with random matrix theory, operator algebras, combinatorics, entropy and quantum information theory. Later, different notions of non-commutative independence have been considered, such as \textit{Boolean independence} \cite{SpW} and \textit{monotone independence} \cite{muraki2000monotonic}, each being rich enough to define a corresponding theory of non-commutative probability.

\

Roughly speaking, the idea in non-commutative probability is to look at random variables as abstract elements inside a unital algebra $\A$ with unit $1_\A$ and to replace the usual expectation with a linear functional $\varphi:\A\to\mathbb{C}$, with $\varphi(1_\A) = 1$, called the \textit{non-commutative expectation}. In the classical case, the independence of random variables provides a recipe to compute mixed moments: if two random variables $X$ and $Y$ are independent, then $\mathbb{E}(X^mY^n) = \mathbb{E}(X^m)\mathbb{E}(Y^n)$, for any $m,n\geq0$. In the non-commutative setting, the independence of non-commutative random variables $a, b\in\A$ can also be defined as a recipe to compute $\varphi(a^{m_1}b^{n_1}\cdots a^{m_k}b^{n_k})$ in terms of the sequences $\{\varphi(a^n)\}_{n\geq0}$ and $\{\varphi(b^n)\}_{n\geq0}$. 

\

One of the most important concepts in non-commutative probability is the notion of cumulants, which have proven to be a significant tool in the combinatorial study of non-commutative probability. Roland Speicher introduced in \cite{Spe94} the notion of \emph{free cumulants} and proved that, from a
combinatorial point of view, the transition from classical probability to free probability consists in replacing the lattice of all set partitions with the lattice of \textit{non-crossing partitions}. Moreover, the analogous of \textit{Boolean cumulants} \cite{SpW} and \emph{monotone cumulants} \cite{HS11} have been defined, using the lattices of interval and monotone partitions, respectively. On the other hand, Anshelevich in \cite{Ansh04} introduced the notion of \textit{free Wick polynomials}, which is the counterpart in free probability of the classical Wick polynomials \cite{avram1987noncentral}. More latterly, the works of \cite{AC22, biane2023combinatorics, JVNT} have shown connections between the combinatorics of non-commutative probability and \textit{Schröder trees}. In particular, in \cite{AC22,JVNT}, the authors provide an alternative description of the cumulant-moment formulas by using Schröder trees instead of non-crossing partitions.

\

The notions of \textit{bialgebras} and \textit{Hopf algebras} (see \cite[Chap. 3]{CaP22}) have been proven to be helpful to understand in an alternative way the algebraic and combinatorial aspects of non-commutative probability theory, see for instance \cite{ASvW1987, SchurmannBook, NicaMastnak}. More generally, the work of Saj-Nicole A. Joni and Gian-Carlo Rota in \cite{JoniRota} was one of the first to emphasize the role of Hopf algebras as a language and a box tool to address specific problems in combinatorics: the notions of ``assembling'' and ``disassembling'' families of combinatorial objects, such as permutations, non-crossing partitions and trees, can be understood in an algebraic setting under products and coproducts.

\

More recently, Ebrahimi-Fard and Patras, in a series of papers \cite{EFP:2015, EFP18, EFP20}, have developed a Hopf-algebraic and pre-Lie-theoretical framework for cumulants in non-commutative probability. In this framework, the free, Boolean and monotone cumulants are interpreted as linear functionals on a word Hopf algebras, and the relations between moments and the different types of cumulants can be explained analogously to the link between a Lie algebra and its corresponding Lie group. Ebrahimi-Fard and Patras' point of view has proven to be a powerful framework for understanding cumulants and their combinatorial relations in a unified way, as well as the different notions of additive convolutions originated by each type of independence. 

\ 

In more precise words, the Hopf algebra considered by Ebrahimi-Fard and Patras in \cite{EFP:2015} is based on the \emph{double tensor module on a vector space $V$}, that is the vector space $\tT(\tT_+(V))$ defined by
$$ \tT(\tT_+(V)) := \bigoplus_{n\geq0} \tT_+(V)^{\otimes n},\quad\mbox{ where }\quad \tT_+(V):= \bigoplus_{n>0} V^{\otimes n}.$$
The authors of \cite{EFP:2015} introduced a product and a coproduct which provide to $\tT(\tT_+(V))$ a structure of connected graded bialgebra. By the general theory of Hopf algebras, an antipode can be recursively defined on each graded component of $\tT(\tT_+(V))$ providing it automatically with a Hopf algebra structure, called the \textit{double tensor Hopf algebra on $V$}. The link with non-commutative probability arises by identifying cumulants as the solutions of certain linear fixed-point equations in the dual of $\tT(\tT_+(V))$ (\cref{thm:link}).

\ 

The main objective of the present manuscript is to address the problem of computing a cancellation-free combinatorial formula for the antipode in $\tT(\tT_+(V))$, expressed in terms of Schröder trees. Our motivation comes from a general method developed by Menous and Patras in \cite{Menous2018} to compute a forest-type formula for the iterated coproduct and the antipode of a specific class of commutative polynomial Hopf algebras. We apply this method to first compute the antipode in $\sS(\tT_+(V))$, the \textit{symmetric algebra on $\tT_+(V)$}, that is, the Hopf algebra of symmetric tensors over $\tT_+(V)$ which can be thought as the commutative analogue of $\tT(\tT_+(V))$. The method in \cite{Menous2018} provides a combinatorial formula for the iterations of the reduced coproduct in $\sS(\tT_+(V))$ based on certain decorated non-planar trees, where the non-planarity of the trees reflects the fact that the symmetric algebra is commutative. Our combinatorial analysis shows that the decorated trees of Menous and Patras can be replaced by Schröder trees whose internal vertices are labelled increasingly, respecting the poset structure. It turns out that Schröder trees effectively describe the actions of the iterated coproduct and the antipode in $\sS(\tT_+(V))$ (\cref{thm:SymAntipode}).

\ 

%The formulas obtained in the symmetric case give us an insight into the formulas for the non-commutative case. The corresponding proofs for the non-commutative case do not follow as simply as considering planar trees in the forest formulas in \cite{Menous2018}. Nevertheless, since the poset structure of a tree contains the same information whether the tree is planar or non-planar, one should expect that the formulas for the non-commute 
In the present manuscript, we provide a combinatorial analysis that shows how cancellations are performed in the non-commutative case in order to arrive at the combinatorial formula for the antipode in $\tT(\tT_+(V))$ expressed terms of Schröder trees (\cref{thm:antipodetT}). With the antipode formula, we study its implications in non-commutative probability via the Ebrahimi-Fard and Patras' framework. In particular, we are able to recover cumulant-moments formulas in terms of Schröder trees, which have recently appeared in the works \cite{JVNT,AC22}, as well as a new combinatorial formula that writes free Wick polynomials in terms of Schröder trees (\cref{thm:WickSch}). 

\subsection*{Organization of the paper} Besides the present section that serves as an introduction, the paper is organized in the following way. In \cref{sec:preliminaries}, we present a description of the main combinatorial objects used in this manuscript: trees, Schröder trees, and non-crossing partitions. We also explain a natural map associating a canonical non-crossing partition to any Schröder tree. In \cref{sec:doubletensor}, we precisely describe the Hopf algebra structure on the double tensor module $\tT(\tT_+(V))$ defined by Ebrahimi-Fard and Patras. We also explain how the coproduct in this Hopf algebra can be split, producing two non-coassociative coproducts, which will allow us to define three exponential-type maps on the dual of the double tensor Hopf algebra. \cref{sec:ncp} contains the definition of cumulants in non-commutative probability and describes the link between the double tensor Hopf algebra and non-commutative probability. \cref{sec:antipodeSS} aims to describe the general method in \cite{Menous2018} to compute a forest-type formula for the antipode in a particular class of Hopf algebras. We then apply this method to obtain a combinatorial formula for the antipode in $\sS(\tT_+(V))$ in terms of Schröder trees. In \cref{sec:antipodeTT}, we present in \cref{thm:antipodetT} the main result of our manuscript: a cancellation-free formula for the antipode in the double tensor algebra in terms of Schröder trees. Finally, in \cref{sec:applications}, we apply \cref{thm:antipodetT} in the Ebrahimi-Fard and Patras' framework for non-commutative probability and recover recently known cumulant-moment formulas in terms of Schröder trees, as well as a new expression for Anshelevich's free Wick polynomials also in terms of Schröder trees.

\section{Preliminaries on Schröder trees and non-crossing partitions}
\label{sec:preliminaries}

The aim of this section is to describe the combinatorial objects that are fundamental for the present manuscript: Schröder trees, non-crossing partitions, and their relations. To this end, we set $\mathbb{N}=\{0,1,2, \ldots \}$ as the set of non-negative integers. For every $n \in \mathbb{N}$, we denote by $[n]$ the set $[n]:=\{1,2, \hdots, n\}$, with $[0]:=\emptyset$. Given a finite set $I$, its cardinality is denoted by $|I|$. For every integer $n$, then $\Abs(n) \in \mathbb{N}$ denotes its absolute value.

\subsection{Trees}

A \emph{tree} is a connected graph that has no cycles. A \emph{rooted tree} is a tree with a distinguished vertex called the \emph{root}. All trees in this work are rooted, so we will not distinguish between trees and rooted trees. The set of edges of a rooted tree possesses a natural orientation, following the opposite direction to the root. Given a vertex $v$ of a tree $t$, the vertex connected to $v$ in the direction to the root is called the \emph{parent} of $v$, and any vertex connected to $v$ by an edge oriented towards the root are called \emph{children} of $v$. We say that a tree $t$ is \emph{planar} if, for every vertex $v$ of $t$, the set of children of $t$ is endowed with a total order. \emph{Non-planar trees} refer to trees that are not planar.

\

A \emph{leaf} of a tree is a vertex with no children, and an \textit{internal vertex} of a tree is a vertex that is not a leaf. We denote $\operatorname{Vert}(t)$ and $\operatorname{Int}(t)$ the set of vertices and internal vertices of $t$, respectively. Also, for $v\in \operatorname{Vert}(t)$, we denote $\operatorname{succ}(v)$ the set of children of $v$. A \emph{non-planar forest} is a set of non-planar rooted trees. Similarly, an \emph{ordered planar forest} is an ordered sequence of planar trees.

\

Any (planar or non-planar) rooted tree $t$  can be regarded as a poset in the following way. The elements of the poset are given by $\operatorname{Vert}(t)$. Moreover, for $v,w\in\operatorname{Vert}(t)$, we define the partial order $v\leq w$ if and only if the unique path from the root of $t$ towards $w$ passes through $v$.  In the case that $v\leq w$, we say that $w$ is a \textit{descendant} of $v$. Notice that in this poset structure, the root of $t$ is the unique minimal element, and the leaves of $t$ are the maximal elements.

\

Given a forest $f$, the \emph{branching} of $f$ is the new tree $B_+(f)$ obtained by joining all roots of every tree in $f$ to a new vertex $r$, so the root of $B_+(f)$ is $r$. Every tree $t$ can be obtained as the branching $B_+(f)$ of a (possibly empty) forest $f$. For planar trees, we will also denote $B_+(f)$ to be the planar tree obtained by grafting from left to right the elements of the sequence $f=(t_1,\ldots,t_s)$ to a new common root $r$.

\

The \emph{empty tree} $\mathsf{I}$ is the unique tree with no internal vertex and just one leaf. Given $m \geq 1$, the \emph{$m$-th corolla} $C_{(m)}$ is the unique tree with one vertex and $m$ leaves:
\[C_{(m)}:= B_+(\underbrace{\mathsf{I}, \mathsf{I}, \hdots, \mathsf{I}}_{m \text{ times }}) = \underbrace{\begin{tikzpicture}[baseline={([yshift=-1.5ex]current bounding box.center)},scale=0.6,
level 1/.style={level distance=7mm,sibling distance=7mm},solid node/.style={circle,draw,inner sep=1,fill=black, minimum size=1pt}]
\node(0)[solid node,label=above:{}]{} 
child{node(1)[solid node]{}
}
child{node(2)[solid node]{}
}
child{[dotted] node(11){}
}
child{[black] node(111)[solid node]{}
};
\end{tikzpicture}}_{m \text{ leaves }}.\]

\
For any rooted tree $t$, the \emph{size} $|t|$ of $t$ is its number of vertices, i.e.~$|t|:=|\operatorname{Vert}(t)|$. In the same way, we denote by $\internal(t):=|\operatorname{Int}(t)|$ the number of internal vertices of $t$. Therefore, $|t|-\internal(t)$ is the number of leaves of the tree $t$. If $t=B_+(t_1, t_2, \ldots, t_k)$, then
\begin{equation}
\label{eq:sizet}
|t|= 1 + |t_1|  + \cdots +|t_k|\quad \mbox{and}\quad \internal(t) =1 + \internal(t_1) +\cdots + \internal(t_k).
\end{equation}
The \emph{tree factorial} $t!$ is recursively defined by setting 
\[
t!:=\begin{cases}
    1, & \text{ if } |t| = 1;\\
    |t|\,t_1! t_2!\cdots t_k!, & \text{ if } t=B_+(t_1, t_2, \hdots, t_k).
\end{cases}\]
Finally, if $f$ is a forest formed by the rooted trees $t_1,\ldots,t_m$, we set $f! := t_1!\cdots t_m!$. 

\subsection{Schröder trees}

A \textit{Schröder tree} is a planar rooted tree for which each internal vertex has at least two children. For every $n,k \in \mathbb{N}$ such that $1 \leq k \leq n$, the set of Schröder trees with $k$ internal vertex and $n+1$ leaves is denoted by $\mathsf{Sch}_k(n)$. Also, we denote
$\mathsf{Sch}(n) = \bigcup_{k\geq 1}\mathsf{Sch}_k(n)$ the set of Schröder trees with $n+1$ leaves, with $\mathsf{Sch}(0)$ to be the set only containing the single-vertex tree $\circ$, whose unique vertex is considered as  a leaf. The \textit{degree} of a Schröder tree $t$ is the integer $n$ such that $t\in\ST(n)$, and it is denoted by $\deg(t)$. In this work, every Schröder tree is represented with the root at the top, and the leaves at the bottom. Internal vertices will be depicted as black nodes, while leaves will be depicted as white.

%\improvement{Adapt figure}

\begin{figure}[H]
\centering
%\begin{center}
\begin{tabular}{c c c c c c }
\begin{tikzpicture}[scale=0.7,
level 1/.style={level distance=7mm,sibling distance=7mm}]
\node(0)[solid node,label=above:{}]{} 
child{node(1)[hollow node]{}
}
child{node(2)[hollow node]{}
}
child{[black] node(11)[hollow node]{}
}
child{[black] node(111)[hollow node]{}
};
\end{tikzpicture} &  \begin{tikzpicture}[scale=0.7,
level 1/.style={level distance=7mm,sibling distance=7mm}]
\node(0)[solid node,label=above:{}]{} 
child{node(1)[hollow node]{}
}
child{node(2)[solid node]{}
	child{[black] node(11)[hollow node]{}}
	child{[black] node(111)[hollow node]{}}
	child{[black] node(1111)[hollow node]{}}
};
\end{tikzpicture} &
\begin{tikzpicture}[scale=0.7,
level 1/.style={level distance=7mm,sibling distance=7mm}]
\node(0)[solid node,label=above:{}]{} 
child{node(1)[solid node]{}
	child{[black] node(11)[hollow node]{}}
	child{[black] node(111)[hollow node]{}}
}
child{node(2)[hollow node]{}
}
child{node(3)[hollow node]{}};
\end{tikzpicture}  & \begin{tikzpicture}[scale=0.7,
level 1/.style={level distance=7mm,sibling distance=7mm}]
\node(0)[solid node,label=above:{}]{} 
child{node(1)[solid node]{}
	child{[black] node(11)[hollow node]{}}
	child{[black] node(111)[hollow node]{}}
	child{[black] node(1111)[hollow node]{}}
}
child{node(2)[hollow node]{}
};
\end{tikzpicture} & \begin{tikzpicture}[scale=0.7,
level 1/.style={level distance=7mm,sibling distance=7mm}]
\node(0)[solid node,label=above:{}]{} 
child{node(1)[hollow node]{}
}
child{node(2)[hollow node]{}
}
child{node(3)[solid node]{}
	child{[black] node(11)[hollow node]{}}
	child{[black] node(111)[hollow node]{}}};
\end{tikzpicture} &\begin{tikzpicture}[scale=0.7,
level 1/.style={level distance=7mm,sibling distance=7mm}]
\node(0)[solid node,label=above:{}]{} 
child{node(1)[hollow node]{}
	}
child{node(2)[solid node]{}
	child{[black] node(11)[hollow node]{}}
	child{[black] node(111)[hollow node]{}}
}
child{node(3)[hollow node]{}
};
\end{tikzpicture}  \\[0.2cm]
\begin{tikzpicture}[scale=0.7,
level 1/.style={level distance=7mm,sibling distance=7mm}]
\node(0)[solid node,label=above:{}]{} 
child{node(1)[solid node]{}
 	child{[black] node(11)[solid node]{}
		child{[black] node(112)[hollow node]{}}
		child{[black] node(1112)[hollow node]{}}
	}
	child{[black] node(111)[hollow node]{}}
	}
child{node(2)[hollow node]{}
};
\end{tikzpicture}  & \begin{tikzpicture}[scale=0.7,
level 1/.style={level distance=7mm,sibling distance=7mm}]
\node(0)[solid node,label=above:{}]{} 
child{node(1)[solid node]{}
 	child{[black] node(11)[hollow node]{}
	}
	child{[black] node(111)[solid node]{}
		child{[black] node(112)[hollow node]{}}
		child{[black] node(1112)[hollow node]{}}
	}	
	}
child{node(2)[hollow node]{}
};
\end{tikzpicture} & \begin{tikzpicture}[scale=0.7,
level 1/.style={level distance=7mm,sibling distance=7mm}]
\node(0)[solid node,label=above:{}]{} 
child{node(1)[hollow node]{}	
	}
child{node(2)[solid node]{}
	child{[black] node(11)[solid node]{}
		child{[black] node(112)[hollow node]{}}
		child{[black] node(1112)[hollow node]{}}
	}
	child{[black] node(111)[hollow node]{}	}
};
\end{tikzpicture}  & \begin{tikzpicture}[scale=0.7,
level 1/.style={level distance=7mm,sibling distance=7mm}]
\node(0)[solid node,label=above:{}]{} 
child{node(1)[hollow node]{}
	}
child{node(2)[solid node]{}
	child{[black] node(11)[hollow node]{}
	}
	child{[black] node(111)[solid node]{}
		child{[black] node(112)[hollow node]{}}
		child{[black] node(1112)[hollow node]{}}
	}
};
\end{tikzpicture} & \begin{tikzpicture}[scale=0.7,
level 1/.style={level distance=8mm,sibling distance=12mm},
level 2/.style={level distance=8mm,sibling distance=6mm},
]
\node(0)[solid node,label=above:{}]{} 
child{node(1)[solid node]{}
	child{[black] node(112)[hollow node]{}}
		child{[black] node(1112)[hollow node]{}}
	}
child{node(2)[solid node]{}
	child{[black] node(11)[hollow node]{}
	}
	child{[black] node(111)[hollow node]{}
	}
};
\end{tikzpicture}  
\end{tabular}
%\end{center}
\vspace{0.5cm}
\caption{Schröder trees in $\mathsf{Sch}(3)$.}
\label{fig:st3}
\end{figure}

Analogously, a \textit{Schröder forest} is an ordered sequence of Schröder trees $F=(t_1,\ldots,t_m)$. In addition, the \textit{degree of $F$} is given by the sum of the degrees of its underlying trees, i.e.~$$\operatorname{deg}(F) = \deg(t_1)+\cdots +\deg(t_m).$$The set of Schröder forest of degree $n$ is then denoted by $\mathsf{FSch}(n)$, and the set of Schröder forest of the form $F= (t_1,\ldots,t_m)$, where $\deg(t_i)=n_i$ for $1\leq i\leq m$, is denoted by $\mathsf{FSch}(n_1,\ldots,n_m)$.

\ 

Schröder trees are known in the literature under several names, such as \emph{reduced planar trees} (see \cite{EFM2014}). For every $n\geq 0$, the number of elements in $\mathsf{Sch}_k(n)$ is given by
\begin{equation}\label{numberSch}
    |\mathsf{Sch}_k(n)|=\frac{1}{n+k+3}\binom{n+k+3}{k+1}\binom{n}{k} \quad , \quad 0 \leq k \leq n.
\end{equation}

\

This number corresponds to \cite[A033282]{oeis}. When $k=n$, a tree in $\mathsf{Sch}_{n}(n)$ corresponds to a \emph{binary tree}, a planar tree for which every internal vertex has exactly two children. The above formula leads to  $|\mathsf{Sch}_{n}(n)|=\mathsf{Cat}_n:=\frac{1}{n+1}\binom{2n}{n}$, the \emph{$n$-th Catalan number}. In general, Schröder trees are equinumerous to several objects in many contexts in mathematics. For instance, the set $\mathsf{Sch}_k(n)$ is in bijection with $k$-dimensional faces of the $n$-th dimensional associahedron, with $k$ non-crossing diagonals in a convex $(n+2)$-gon, or standard Young tableaux of shape $(k+1, k+1, 1^{n-k-1})$, just to name a few (see \cite{Simion2000} for an extensive list).%We will show in section ? a new relation between Schröder trees and a special family of ordered partitions.

\

The \textit{skeleton} of a Schröder tree $t$ is the planar rooted subtree of $t$ generated by the internal vertices of $t$. We denote the skeleton of $t$ by $\sk(t)$. The skeleton of $t$ can also be regarded as a poset whose elements are given by $\operatorname{Int}(t)$. In the same way, if $F = (t_1,\ldots,t_m)$ is a Schröder forest, we define the \textit{skeleton of $F$}, denoted by $\sk(F)$, as the ordered forest $(\sk(t_1),\ldots,\sk(t_m))$. Moreover, $\sk(F)$ can also be regarded as the poset formed by the union of posets $\cup_{i=1}^m \sk(t_i).$ %In particular, it makes sense to consider $k$-linearizations of $\sk(F)$. %For the particular case of $2$-linearizations, notice that a Schröder forest $F=(t_1,\ldots,t_m)$ that has 2-linearization $f$ is either a forest formed by two corollas, or a forest formed by corollas and at least a Schröder tree of height 2. Observe that the elements in $f^{-1}(1)$ form a subset of the roots of the Schröder trees in $F$.

\ 

For our purposes, it is convenient to introduce two special subsets of $\mathsf{Sch}(n)$. 

\begin{definition}
    Let $t$ be a Schröder tree.
    \begin{enumerate}
        \item We say that $t$ is a \textit{prime Schröder tree} if its leftmost subtree is a leaf. In other words, if $t = B_+(t_1,\ldots,t_k)$, then $t_1 = \circ$ (a single-vertex tree). We denote $\PST(n)$ the set of prime Schröder trees with $n+1$ leaves.
        \item We say that $t$ is a \textit{Boolean Schröder tree} if $t$ is a Schröder tree such that for every $v\in\operatorname{Int}(t)$, the children of $v$ are all leaves except possibly its leftmost child. For each $n\geq1$, we denote $\mathsf{BSch}(n)$ the set of Boolean Schröder trees with $n+1$ leaves.
    \end{enumerate}
\end{definition}

It is known that prime Schröder trees are counted by the \emph{large Schröder numbers} \cite[A006318]{oeis}. In particular, for any $n \geq 1$, there exists a two-to-one surjective function \linebreak $P:\PST(n)\to\mathsf{Sch}(n-1)$, so that $|\PST(n)| = 2|\mathsf{Sch}(n-1)|$. For the case of Boolean Schröder trees, it can be shown that $|\mathsf{BSch}(n)| = 2^{n-1}$, for any $n\geq1$.

\begin{figure}[H]
\centering
%\begin{center}
\begin{tabular}{c c c c  }
\begin{tikzpicture}[scale=0.7,
level 1/.style={level distance=7mm,sibling distance=7mm}]
\node(0)[solid node,label=above:{}]{} 
child{node(1)[hollow node]{}
}
child{node(2)[hollow node]{}
}
child{[black] node(11)[hollow node]{}
}
child{[black] node(111)[hollow node]{}
};
\end{tikzpicture}  &
\begin{tikzpicture}[scale=0.7,
level 1/.style={level distance=7mm,sibling distance=7mm}]
\node(0)[solid node,label=above:{}]{} 
child{node(1)[solid node]{}
	child{[black] node(11)[hollow node]{}}
	child{[black] node(111)[hollow node]{}}
}
child{node(2)[hollow node]{}
}
child{node(3)[hollow node]{}};
\end{tikzpicture}  & \begin{tikzpicture}[scale=0.7,
level 1/.style={level distance=7mm,sibling distance=7mm}]
\node(0)[solid node,label=above:{}]{} 
child{node(1)[solid node]{}
	child{[black] node(11)[hollow node]{}}
	child{[black] node(111)[hollow node]{}}
	child{[black] node(1111)[hollow node]{}}
}
child{node(2)[hollow node]{}
};
\end{tikzpicture} &  $\quad$\begin{tikzpicture}[scale=0.7,
level 1/.style={level distance=7mm,sibling distance=7mm}]
\node(0)[solid node,label=above:{}]{} 
child{node(1)[solid node]{}
 	child{[black] node(11)[solid node]{}
		child{[black] node(112)[hollow node]{}}
		child{[black] node(1112)[hollow node]{}}
	}
	child{[black] node(111)[hollow node]{}}
	}
child{node(2)[hollow node]{}
};
\end{tikzpicture}
  
\end{tabular}
%\end{center}
\vspace{0.5cm}
\caption{Boolean Schröder trees in $\mathsf{BSch}(3)$.}
\label{fig:bst3}
\end{figure}

Let $t \in \mathsf{Sch}(n)$ be a Schröder tree with $n+1$ leaves. Since $t$ is planar, we can naturally label the set of leaves of $t$ with the numbers $1,2,\ldots, n+1$, reading the leaves from left to right. Hence, two leaves of $t$ are \emph{consecutive} if they are labelled with $i$ and $i+1$, for some $1 \leq i \geq n$. A \emph{sector}\footnote{Our definition comes from \cite{PatrasSchocker}, where sectors are called ``branchings''. } of $t$ is a minimal subtree of $t$ containing two consecutive leaves. Roughly speaking, a sector of $t$ is the area encompassed by two consecutive leaves of $t$. Also, notice that the root of a sector is the intersection of the unique paths from the consecutive leaves to the root of $t$. In addition, if $v$ is an internal vertex of $t$ and $s$ is a sector of $t$ its root is $v$, we say that $s$ is \emph{adjacent to the vertex $v$}. 

\

We denote by $\operatorname{Sect}(t)$ the set of all sectors of $t$. It is clear that $|\operatorname{Sect}(t)| = n$. The set $\operatorname{Sect}(t)$ is naturally endowed with a total order, consisting of listing each sector of $t$ from left to right, bottom to top. Formally:
\begin{itemize}
    \item[$\bullet$] If $t=\circ$, we have an empty set of sectors. If $t=C_{(m+1)}$ is a corolla with exactly $m+1$ leaves, for some $m \geq 0$, then $t$ possess $m$ sectors. We denote by $s_1, s_2, \hdots, s_m$ the sectors of $t$, by order of appearance from left to right. Then, we define the following total order between the sectors:
    \[s_1 < s_2 < \cdots < s_m.\]
    \item[$\bullet$] More generally, assume that  $t=B_+(t_0, t_1, \hdots, t_m)$, where $t_0, t_1, \hdots, t_m$ are Schröder trees. %Let $t_{i_1}, \hdots, t_{i_k}$ be the set of non-single-vertex trees of the list $t_0, t_1, \hdots, t_m$. 
    If $s$ is a sector of $t$ adjacent to the root of $t$ and located between the trees $t_{{i}}$ and $t_{{i+1}}$, we extend inductively the orders of $\operatorname{Sect}(t_0), \operatorname{Sect}(t_1), \ldots, \operatorname{Sect}(t_m)$ to an order on $\operatorname{Sect}(t)$, via
    \[s' < s < s'', \quad \text{ for all }  s' \in \operatorname{Sect}(t_i), \, s'' \in \operatorname{Sect}(t_{i+1}). \]
\end{itemize}

Let $t \in \mathsf{Sch}(n)$, with $n \geq 1$. There exists a unique increasing bijection 
\begin{equation}
\iota_t: ([n], \leq_{\mathbb{N}}) \to (\operatorname{Sect}(t), <),
\end{equation}
where $\leq_{\mathbb{N}}$ is the usual order on $\mathbb{N}$. We call $\iota_t$ the \emph{natural labelling} of $t$.

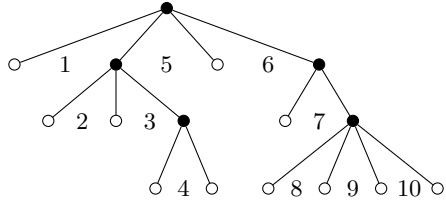
\begin{figure}[H]
\centering
\begin{subfigure}{.4\textwidth}
  \centering
  \begin{tikzpicture}[scale=1.5,font=\footnotesize,
 level 1/.style={level distance=5mm,sibling distance=9mm},
 level 2/.style={level distance=5mm,sibling distance=6mm},
 level 3/.style={level distance=6mm,sibling distance=5mm},
 level 4/.style={level distance=10mm,sibling distance=5mm},
]
% The Tree
\node(0)[solid node,label=above:{}]{} 
child{node(1)[hollow node]{}
}
child{node(2)[solid node]{}
child{[black] node(11)[hollow node]{}}
child{[black] node(12)[hollow node]{}}
child{[black] node(13)[solid node]{}
	child{[black] node(131)[hollow node]{}}
	child{[black] node(132)[hollow node]{}}
}
edge from parent node[left]{}
}
child{node(3)[hollow node]{}
}
child{node(4)[solid node]{}
child{[black] node(41)[hollow node, ]{}}
child{[black] node(42)[solid node,]{}
	child{[black] node(421)[hollow node, ]{}}
	child{[black] node(422)[hollow node,]{}}
	child{[black] node(423)[hollow node, ]{}}
	child{[black] node(424)[hollow node,]{}}
}
edge from parent node[right]{}
};
% information set
%\draw[dashed,bend right](11)to(12);
%\draw[dashed,bend right](41)to(42);

\path (1) -- node {$1$} (2);
\path (2) -- node {$5$} (3);
\path (3) -- node {$6$} (4);

\path (11) -- node (H) {${2}$} (12);
\path (12) -- node {$3$} (13);
\path (131) -- node {$4$} (132);
\path (41) -- node {$7$} (42);
\path (421) -- node {$8$} (422);
\path (422) -- node {$9$} (423);
\path (423) -- node {$10$} (424);

\end{tikzpicture}
\end{subfigure}
\caption{A Schröder tree $t\in \ST(11)$ and its natural labelling.}
\label{fig:TotalOrderingSch} %{\color{red}  ponemos la particion asociada de una vez?}}
\end{figure}

\subsection{Set partitions}

Let $I$ be a finite set. A \emph{partition} of $I$ (or \emph{set partition}) is a finite set formed by non-empty sets $\pi=\{B_1, B_2, \hdots, B_k\}$, such that the union of all elements in $\pi$ is $I$, and every pair of distinct elements in $\pi$ are disjoint. Every element $B \in \pi$ is called a \emph{block} of $\pi$. If $\Pi(I)$ denotes the set of all partitions of $I$ and $\pi \in \Pi(I)$, we write $\pi \vdash I$. There is a unique partition of $\Pi(\emptyset)$, with no blocks.

\

Let $\pi = \{B_1, B_2, \hdots, B_k\}$ be a partition of $I$. Given a subset $S$ of $I$, the \emph{restriction} of $\pi$ to $S$ is the partition given by $\pi_S:= \{B_1 \cap S, B_2 \cap S, \hdots, B_k \cap S\}^{\text{\faEraser}}$, where the symbol \faEraser \, indicates that empty intersections have been omitted. The \emph{length} of $\pi$, $|\pi|:=k$, is defined as its number of blocks. The \emph{factorial} of the partition $\pi$ is given by
\[\pi!:=\prod_{B \in \pi} (\# B)!.\]

For every finite set $I$, the set $\Pi(I)$ is partially ordered under \emph{reverse refinement}: we set $\pi \leq \sigma$ if every block of $\pi$ is contained in a block of $\sigma$. In other words, $\pi \leq \sigma$ if every block of $\sigma$ can be expressed as union of blocks of $\pi$. The partition $0_I$ of $I$ into singletons is the unique minimum, and $1_I:=\{I\}$ is the unique maximum under the refinement order. Therefore, $(\Pi_I, \leq)$ is a lattice.

\

Let $\pi, \sigma \vdash I$ such that $\pi \leq \sigma$. If $B \in \sigma$, let $n_B:=|\pi_B|$. That is, $n_B$ is the number of blocks of $\pi$ that refine the block $B$ of $\sigma$. The Möbius function of $\Pi(I)$ is then given by (\cite[Chap. 3]{stanley2011enumerative})
\begin{equation}
    \operatorname{M\ddot{o}b}(\pi, \sigma) = (-1)^{|\pi|-|\sigma|}\prod_{B  \, \in \,  \sigma}(n_B-1)!,
\end{equation}
for every $\pi, \sigma \vdash I$ such that $\pi \leq \sigma$.

\subsection{Non-crossing partitions} Let $I$ be a finite totally ordered set.  A \emph{non-crossing partition} of $I$ is a partition $\pi \in \Pi(I)$ for which whenever four elements $a < b < c <d$ in $I$ are such that if $a,c$ are in a same block of $\pi$ and $b, d$ are in another block of $\pi$, then both blocks coincide. The set of non-crossing partitions of $I$ is denoted by $\mathsf{NC}(I)$. When $I= [n]$, we simply denote $\mathsf{NC}(n)$ the set of non-crossing partitions of $[n]$.

\

The reverse refinement order $\leq$ on $\Pi(n)$ induces a poset structure on $\mathsf{NC}(n)$. Since the finest partition $0_n$ and the coarsest partition $1_n$ in $\Pi(n)$ are non-crossing partitions, then $(\mathsf{NC}(n), \leq)$ is a lattice. This poset is closed under meet operation, but not under join. Hence, it is not a sublattice of $(\Pi(n), \leq)$, but rather a meet-sublattice.

\

Non-crossing partitions belong to a vast family of combinatorial objects, namely the \emph{Catalan family}. The number of non-crossing partitions in $\mathsf{NC}(n)$ is the $n$-th Catalan number $\mathsf{Cat_n}$. %\improvement{Repetitive; show bijection NC(n) and SCh(n)}
%\begin{equation}
 %   |\mathsf{NC}(n)|= \mathsf{Cat_n}=\frac{1}{n+1}\binom{2n}{n}.
%\end{equation}
 Several objects are related to non-crossing partitions in many contexts (see \cite{Simion2000} for a compendium of some related objects, and \cite{Stanley2015Catalan} for a more extensive list).

\

One of the first studies on non-crossing partitions goes back to 1952 in the work of Becker (see \cite{Becker1952}), where non-crossing partitions are called ``\emph{planar rhyme schemes}''. In 1972, Kreweras \cite{Kreweras1972} and then Poupard \cite{Poupard1972} continued with a combinatorial analysis of these objects. For instance, Kreweras obtained a formula for the Möbius function of $\mathsf{NC}(n)$ between its minimal and maximal elements: 
\begin{equation}
    \operatorname{M\ddot{o}b}_{\mathsf{NC}(n)}(0_n, 1_n) = (-1)^{n-1}\mathsf{Cat_{n-1}}.
\end{equation}

Several types of non-crossing partitions are considered in this work, arising from non-commutative probability theory. An \emph{interval partition of} $[n]$ is a non-crossing partition $\pi \in \mathsf{NC}(n)$ such that all its blocks are of the form $\{k, k + 1,\ldots, k + l\}$ for some $1 \leq k \leq n$ and $0 \leq l \leq n - k$.  %We say that a non-crossing partition $\pi\in\mathsf{NC}(n)$ is \textit{irreducible} if $1$ and $n$ belong to the same block of $\pi$. 
We denote by $\mathsf{NCInt}(n)$ the set of interval partitions of $[n]$. It is straightforward to verify that the reverse refinement order also provides $\mathsf{NCInt}(n)$ with a poset structure, which is isomorphic to the poset of subsets of a set of cardinality $n-1$.

\allowdisplaybreaks

\begin{figure}[H]
$$
\begin{array}{c c c}
   \begin{tikzpicture}[thick,font=\small]
     \path 	(0,0) 		node (a) {1}
           	(0.5,0) 	node (b) {2}
           	(1,0) 		node (c) {3}
           	(1.5,0) 	node (d) {4}
           	(2,0) 		node (e) {5}
           	(2.5,0) 	node (f)  {6};
     \draw (a) -- +(0,0.75) -| (d);
     \draw (c) -- +(0,0.75) -| (c);
     \draw (b) -- +(0,0.60) -| (b);
     \draw (e) -- +(0,0.75) -| (f);
   \end{tikzpicture}
    & \quad &
     \begin{tikzpicture}[thick,font=\small]
     \path 	(0,0) 		node (a) {1}
           	(0.5,0) 	node (b) {2}
           	(1,0) 		node (c) {3}
           	(1.5,0) 	node (d) {4}
           	(2,0) 		node (e) {5}
           	(2.5,0) 	node (f)  {6};
     \draw (a) -- +(0,0.75) -| (b);
     \draw (c) -- +(0,0.75) -| (e);
     \draw (d) -- +(0,0.75) -| (d);
      \draw (f) -- +(0,0.75) -| (f);  
       \end{tikzpicture} \\
   \mbox{(a) Non-crossing partition}& \quad & \mbox{(b) Interval partition}
\end{array}
$$
\caption{Different types of set partitions of the set $[6]=\{1,\ldots,6\}$. The blocks of the partitions are represented by arcs. The non-crossing condition of the blocks means that there are no intersections of the arcs.}
\end{figure}
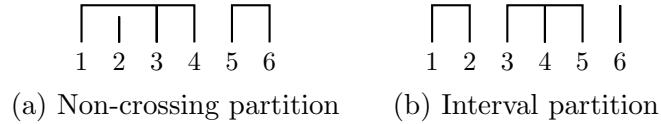

\tikzset{
% Two node styles for game trees: solid and hollow
solid node/.style={circle,draw,inner sep=1.5,fill=black, minimum size=1pt},
hollow node/.style={circle,draw,inner sep=1.5,fill=white,  minimum size=1pt}
}

\subsection{Nesting forests from non-crossing partitions}

To every non-crossing partition $\pi \in \mathsf{NC}(n)$ with $k$ blocks, we associate to it a (non-planar) forest $f_\pi$ with $k$ vertices in total, constructed from a hierarchy law of the blocks of $\pi$. More precisely, given a non-crossing partition $\pi$ and two blocks $B, B' \in \pi$, we say that $B$ is \emph{nested} on $B'$ if $\min B' \leq x \leq \max B'$, for every $x \in B$. This relation gives a poset structure on the set of blocks of $\pi$. The minimal elements are called \emph{outer blocks} of $\pi$. By construction, the poset has no cycles and may have several connected components. Hence, the nesting poset associated to $\pi$ is a (non-planar) forest, denoted by $f_{\pi}$, and called the \emph{forest of nestings} of $\pi$. In particular, $f_{\pi}$ is a single tree if and only if $\pi$ is an \emph{irreducible non-crossing partition}, i.e.~$1$ and $n$ belong to the same block in $\pi$.

\ 

\begin{comment}
For an irreducible non-crossing partition $\pi\in \mathsf{NCIrr}_n$, we construct a rooted tree $t(\pi)$ with $\ell(\pi) = k$ vertices that encodes the hierarchy of the blocks of $\pi$ in the following sense. First, we decorate the vertices of $t(\pi)$ by the blocks of $\pi$. Then we draw a directed edge from a vertex $v$ to a vertex $w$ if and only if the corresponding block $B$ associated to $v$ \textit{covers} the block $C$ associated to $w$, i.e.~$C$ is nested in $B$ in the sense that $c\in [\min B,\max B]$ for any $c\in C$, and there is no other block $B'\in \pi$ such that $B'$ is nested in $B$ and $C$ is nested in $B'$. The tree $t(\pi)$ is called the \textit{tree of nestings} of $\pi$. 
\end{comment}

If $\{B_1,\ldots,B_s\}$ is the set of outer blocks of a non-crossing partition $\pi$, we can construct $\pi_1,\ldots,\pi_s$ irreducible non-crossing partitions as
\[\pi_i := \{B\in \pi\,:\,B\mbox{ is nested in }B_i\},\] 
for each $1\leq i\leq s$. The non-crossing partitions $\pi_1,\ldots,\pi_s$ are called the \textit{irreducible components of $\pi$}. Hence, the nesting forest associated to $\pi$ is given by $f_{\pi}=\{f_{\pi_1}, \hdots, f_{\pi_s}\}$.

\

In the following picture we draw two non-crossing partitions $\pi$ and $\sigma$ together with their respective forest of nesting $f_{\pi}$ and $f_{\sigma}$. For clarity, each vertex of the forest is decorated with the minimal element of their associated block in the partition.

\begin{center}
\begin{figure}[H]
$$
\begin{array}{c c c c c c}
\pi &=& \begin{tikzpicture}[baseline={([yshift=0.3ex]current bounding box.center)},thick,font=\small]
     \path 	(0,0) 		node (a) {1}
           	(0.5,0) 	node (b) {2}
           	(1,0) 		node (c) {\phantom{1}}
           	(1.5,0) 	node (d) {4}
           	(2,0) 		node (e) {\phantom{1}}
           	(2.5,0) 	node (f)  {6}
           	(3,0) 		node (g) {\phantom{1}}
           	(3.5,0) 	node (h) {\phantom{1}}
		(4,0) 		node (i) {\phantom{1}}
		(4.5,0) 	node (j) {\phantom{1}};
     \draw (a) -- +(0,0.75) -| (j);
     \draw (b) -- +(0,0.60) -| (b);
     \draw (c) -- +(0,0.75) -| (c);
     \draw (d) -- +(0,0.60) -| (e);
     \draw (e) -- +(0,0.60) -| (h);
     \draw (f) -- +(0,0.50) -| (g);
     \draw (i) -- +(0,0.75) -| (j);
   \end{tikzpicture}, 	& \quad f_{\pi} &=&
				  \begin{tikzpicture}[baseline={([yshift=-1ex]current bounding box.center)},scale=0.6,
level 1/.style={level distance=7mm,sibling distance=7mm}]
\node(0)[solid node,label=right:{\tiny 1}]{} 
child{node(1)[solid node,label=left:{\tiny 2}]{}
}
child{node(2)[solid node,label=right:{\tiny 4}]{}
	child{[black] node(11)[solid node,label=right:{\tiny 6}]{}}
};
\end{tikzpicture}
\\[1cm]
\sigma &= & \begin{tikzpicture}[baseline={([yshift=0.3ex]current bounding box.center)},thick,font=\small]
     \path 	(0,0) 		node (a) {1}
           	(0.5,0) 	node (b) {2}
           	(1,0) 		node (c) {\phantom{1}}
           	(1.5,0) 	node (d) {4}
           	(2,0) 		node (e) {\phantom{1}}
           	(2.5,0) 	node (f)  {6}
           	(3,0) 		node (g) {7}
           	(3.5,0) 	node (h) {\phantom{1}};
     \draw (a) -- +(0,0.75) -| (c);
     \draw (b) -- +(0,0.60) -| (b);
     \draw (c) -- +(0,0.75) -| (c);
     \draw (d) -- +(0,0.75) -| (e);
     \draw (e) -- +(0,0.75) -| (h);
     \draw (f)  -- +(0,0.60) -|(f);
     \draw (g) -- +(0,0.60)-|(g); 
   \end{tikzpicture}, 	& \quad f_{\sigma} &=&\;\;\begin{tikzpicture}[baseline={([yshift=-1ex]current bounding box.center)},scale=0.6,
level 1/.style={level distance=7mm,sibling distance=7mm}]
\node(0)[solid node,label=right:{\tiny 1}]{} 
child{node(1)[solid node,label=right:{\tiny 2}]{}
};
\end{tikzpicture}
\;\; \begin{tikzpicture}[baseline={([yshift=-1ex]current bounding box.center)},scale=0.6,
level 1/.style={level distance=7mm,sibling distance=7mm}]
\node(0)[solid node,label=right:{\tiny 4}]{} 
child{node(1)[solid node,label=left:{\tiny 6}]{}
}
child{node(2)[solid node,label=right:{\tiny 7}]{}
};
\end{tikzpicture}
\end{array}  
$$
%\caption{Non-crossing partitions $\pi$ and $\sigma$ together with their respective forest of nesting $t(\pi)$ and $t(\sigma)$. For the reader's convenience, each vertex of the forest is decorated with the minimal element of their associated block in the partition.}
\end{figure}
\end{center}
It is convenient to consider the case in which the partial order described by the forest of nestings of a non-crossing partition can be extended to a total order. More precisely, we say that $(\pi,\lambda)$ is a \emph{monotone partition on $[n]$} if $\pi\in\NC(n)$ and $\lambda:\pi\to[|\pi|]$ is a order-preserving bijection, i.e.~if $B$ and $B'$ are distinct blocks of $\pi$ such that $B'$ is nested in $B$, then $\lambda(B)< \lambda(B')$. The set of monotone partitions on $[n]$ is denoted by $\mathcal{M}(n)$.

\subsection{From Schröder trees to non-crossing partitions.}

Let $t \in \mathsf{Sch}(n)$. Recall that the set $\operatorname{Sect}(t)$ of all sectors of $t$ is naturally endowed with a total order. Moreover, the Schröder tree $t$ has a natural labelling $\iota_t: [n] \to \operatorname{Sect}(t)$, which is a bijection between totally ordered sets. The next definition presents a natural way to associate a non-crossing partition to any Schr\"oder tree. 

\begin{definition}
\label{def:pit}
Let $t\in\ST(n)$, with set of internal vertices $\operatorname{Int}(t) = \{v_1, v_2, \hdots, v_k\}$. The \emph{non-crossing partition associated to $t$} is the set partition $\pi(t):=\{B_1, \hdots, B_k\} \in \mathsf{NC}(n)$ formed by $k$ blocks, given by
\begin{equation}
    \label{eq:defipit}
    B_i:=\{\iota^{-1}_t(s): s \text{ is a sector adjacent to } v_i\},
\end{equation}
for each $1 \leq i \leq k$. In the context of \eqref{eq:defipit}, we also say that $B_i$ is \textit{associated to $v_i$}.
\end{definition}

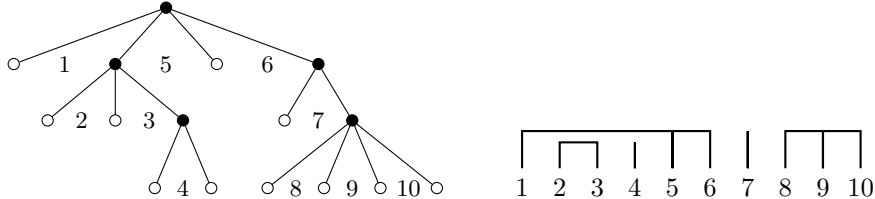
\begin{figure}[H]
\centering
\begin{subfigure}{.4\textwidth}
  \centering
  \begin{tikzpicture}[scale=1.5,font=\footnotesize,
 level 1/.style={level distance=5mm,sibling distance=9mm},
 level 2/.style={level distance=5mm,sibling distance=6mm},
 level 3/.style={level distance=6mm,sibling distance=5mm},
 level 4/.style={level distance=10mm,sibling distance=5mm},
]
% The Tree
\node(0)[solid node,label=above:{}]{} 
child{node(1)[hollow node]{}
}
child{node(2)[solid node]{}
child{[black] node(11)[hollow node]{}}
child{[black] node(12)[hollow node]{}}
child{[black] node(13)[solid node]{}
	child{[black] node(131)[hollow node]{}}
	child{[black] node(132)[hollow node]{}}
}
edge from parent node[left]{}
}
child{node(3)[hollow node]{}
}
child{node(4)[solid node]{}
child{[black] node(41)[hollow node, ]{}}
child{[black] node(42)[solid node,]{}
	child{[black] node(421)[hollow node, ]{}}
	child{[black] node(422)[hollow node,]{}}
	child{[black] node(423)[hollow node, ]{}}
	child{[black] node(424)[hollow node,]{}}
}
edge from parent node[right]{}
};
% information set
%\draw[dashed,bend right](11)to(12);
%\draw[dashed,bend right](41)to(42);

\path (1) -- node {$1$} (2);
\path (2) -- node {$5$} (3);
\path (3) -- node {$6$} (4);

\path (11) -- node (H) {${2}$} (12);
\path (12) -- node {$3$} (13);
\path (131) -- node {$4$} (132);
\path (41) -- node {$7$} (42);
\path (421) -- node {$8$} (422);
\path (422) -- node {$9$} (423);
\path (423) -- node {$10$} (424);

\end{tikzpicture}
  %\caption{A subfigure}
  %\label{fig:sub1}
\end{subfigure}%
\begin{subfigure}{.4\textwidth}
  \centering
  \begin{tikzpicture}[thick,font=\small]
     \path 	(0,0) 		node (a) {1}
           	(0.5,0) 	node (b) {2}
           	(1,0) 		node (c) {3}
           	(1.5,0) 	node (d) {4}
           	(2,0) 		node (e) {5}
           	(2.5,0) 	node (f)  {6}
           	(3,0) 		node (g) {7}
           	(3.5,0) 	node (h) {8}
           	(4,0) 		node (i) {9}
           	(4.5,0) 	node (j)  {10};
     \draw (a) -- +(0,0.75) -| (f);
     \draw (e) -- +(0,0.75) -| (e);
     \draw (b) -- +(0,0.60) -| (c);
     \draw (d) -- +(0,0.60) -| (d);
     \draw (g) -- +(0,0.75) -| (g);
     \draw (h) -- +(0,0.75) -| (j);
     \draw (i) -- +(0,0.75) -| (i);
   \end{tikzpicture}
  %\caption{A subfigure}
  %\label{fig:sub2}
\end{subfigure}
\caption{Schröder tree $t$ with $10$ sectors, and its associated non-crossing partition $\pi(t) = \big\{\{ 1,5,6\},\{2,3\},\{4\}, \{7\}, \{8,9,10\}\big\}\in \NC(10)$.}
\label{fig:DecSchTree} %{\color{red}  ponemos la particion asociada de una vez?}}
\end{figure}

It is straightforward to verify that the map $\mathsf{Sch}(n)\ni t\mapsto \pi(t) \in \mathsf{NC}(n)$ is a surjection. In fact, if we restrict this map to the subset of prime Schröder trees $\mathsf{PSch}(n)$, we still get a surjection that is related to the Möbius function on $\mathsf{NC}(n)$.

\begin{proposition}[{\cite[Eq. (111)]{JVNT}}]
    \label{prop:NumberPST}
    Let $n\geq1$. For any $\pi\in\NC(n)$, we have $$|\{t\in\mathsf{PSch}(n)\,:\, \pi(t) = \pi \}| = \Abs\big(\!\operatorname{M\ddot{o}b}_{\mathsf{NC}(n)}(\pi,1_n)\big).$$
\end{proposition}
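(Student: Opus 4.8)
The plan is to prove the identity by strong induction on $n$, converting both sides into products of Catalan numbers. Throughout, for a non-crossing partition $\sigma$ on a totally ordered set I write $N(\sigma)$ for the number of \emph{all} Schröder trees $t$ with $\pi(t)=\sigma$ and $N^{pr}(\sigma)$ for the number of prime ones; the goal is $N^{pr}(\pi)=\Abs\big(\operatorname{M\ddot{o}b}_{\mathsf{NC}(n)}(\pi,1_n)\big)$.

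First I would characterise the fibre. A Schröder tree is prime exactly when the leftmost child of its root is a leaf, which happens iff the sector labelled $1$ is adjacent to the root; hence a tree with $\pi(t)=\pi$ is prime iff the root realises the block $B_\ast$ of $\pi$ containing the element $1$. Since $1$ is the global minimum, $B_\ast$ is an outer block, and the non-crossing condition forbids any other block from straddling a gap of $B_\ast$, so $B_\ast$ is always admissible as a root block. Writing $B_\ast=\{1=b_1<\dots<b_m\}$, these sectors split the leaves into the forced leaf $t_0$ and maximal subtrees $t_1,\dots,t_m$, where $t_j$ is an \emph{arbitrary} Schröder tree realising the restriction $\pi_j$ of $\pi$ to the sectors strictly between $b_j$ and $b_{j+1}$. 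As primality constrains only $t_0$, this gives
\[ N^{pr}(\pi)=\prod_{j=1}^{m} N(\pi_j). \]

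Next I would compute the unrestricted count. The root of any tree realising $\sigma$ may realise \emph{any} outer block $O$, and once $O$ is fixed the irreducible components of $\sigma$ lying to the left of, inside, and to the right of $O$ are forced into the corresponding subtree slots. Letting $\sigma_1,\dots,\sigma_s$ be the irreducible components (so $s$ is the number of outer blocks) and inducting on $s$, taking $O$ to be the outer block of $\sigma_i$ contributes $\mathsf{Cat}_{i-1}\,\mathsf{Cat}_{s-i}\prod_k N(\sigma_k)$; summing over $i$ and using $\sum_{i=1}^s\mathsf{Cat}_{i-1}\mathsf{Cat}_{s-i}=\mathsf{Cat}_s$ yields
\[ N(\sigma)=\mathsf{Cat}_s\prod_{i=1}^s N(\sigma_i). \]
Feeding this into the fibre formula collapses it to $N^{pr}(\pi)=\mathsf{Cat}_{s-1}\prod_{i=1}^s N(\sigma_i)$, and unrolling the recursion writes $N^{pr}(\pi)$ as a product of Catalan numbers indexed by the ``regions'' of the recursive decomposition of $\pi$ (one inner region per consecutive pair of gaps inside each block, plus one outer region); the outer region contributes $\mathsf{Cat}_{s-1}$ and each inner region $R$ contributes $\mathsf{Cat}_{s(R)}$, where $s(R)$ counts the blocks sitting directly in $R$. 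Note there are $1+\sum_B(\abs{B}-1)=n+1-\abs{\pi}$ such regions.

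For the right-hand side I would invoke the Kreweras complement $K$, an order-reversing involution of $\mathsf{NC}(n)$ giving $[\pi,1_n]\cong\prod_{B\in K(\pi)}\mathsf{NC}(\abs{B})$; with Kreweras' evaluation $\operatorname{M\ddot{o}b}_{\mathsf{NC}(k)}(0_k,1_k)=(-1)^{k-1}\mathsf{Cat}_{k-1}$ this gives
\[ \Abs\big(\operatorname{M\ddot{o}b}_{\mathsf{NC}(n)}(\pi,1_n)\big)=\prod_{B\in K(\pi)}\mathsf{Cat}_{\abs{B}-1}, \]
again a product of $n+1-\abs{\pi}$ Catalan numbers. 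The hard part will be matching the two products: I would establish a size-respecting bijection between the regions of the decomposition of $\pi$ and the blocks of $K(\pi)$, under which the outer region (index $s-1$) meets a $K$-block of size $s$ and each inner region $R$ meets a $K$-block of size $s(R)+1$. Equivalently, one proves by induction the Möbius recursions mirroring those for $N^{pr}$ and $N$, namely $\Abs(\operatorname{M\ddot{o}b}(\pi,1_n))=\mathsf{Cat}_{s-1}\prod_i\Abs(\operatorname{M\ddot{o}b}(\sigma_i,1))$ for reducible $\pi$ together with the region factorisation for irreducible $\pi$. This identification of Kreweras-complement block sizes with the region statistics of $\pi$ is the combinatorial heart of the argument, with the base cases $\pi=1_n$ (the corolla, $\mu=1$) and $\pi=0_n$ (prime binary trees, $\mathsf{Cat}_{n-1}$) anchoring the induction.
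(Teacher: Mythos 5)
The paper itself offers no proof of this proposition: it is imported from \cite[Eq.~(111)]{JVNT}, where it is obtained via noncommutative symmetric functions and a Hopf algebra of decorated Schröder trees. Your elementary route --- computing the fibre of $t\mapsto\pi(t)$ recursively and comparing with the Kreweras-complement factorisation of the Möbius function --- is therefore genuinely different from the cited source, and every formula you state checks out: a tree realising $\pi$ is prime iff its root realises the block containing $1$ (which is automatically outer); the root block of \emph{any} realising tree is an outer block and every outer block occurs, giving $N(\sigma)=\mathsf{Cat}_s\prod_i N(\sigma_i)$ via the Catalan convolution; hence $N^{pr}(\pi)=\mathsf{Cat}_{s-1}\prod_i N(\sigma_i)$, which unrolls to $\prod_R\mathsf{Cat}_{s(R)}$ with the outer exponent lowered by one. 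The identity $\Abs\big(\operatorname{M\ddot{o}b}_{\mathsf{NC}(n)}(\pi,1_n)\big)=\prod_{B\in K(\pi)}\mathsf{Cat}_{|B|-1}$ is standard. The base cases ($1_n$: corolla; $0_n$: prime binary trees, counted by $\mathsf{Cat}_{n-1}$) are also correct.

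What you have written is, however, a plan rather than a finished proof: the step you yourself call the ``combinatorial heart'' --- that the multiset of block sizes of $K(\pi)$ equals $\{s\}\cup\{s(R)+1 : R\ \text{an inner region}\}$ --- is asserted, not established, and without it the two Catalan products are not matched. The claim is true and closes quickly: the blocks of $K(\pi)$ are the faces of the planar arc diagram of $\pi$, a face collecting exactly the gap-positions $\bar\imath$ (between $i$ and $i+1$, with $\bar n$ at the far right) not covered by any arc inside that face. Since the spans of the $s$ outer blocks are consecutive intervals covering $[n]$, the outer face contains precisely $\overline{\max O_1},\dots,\overline{\max O_s}$, so has size $s$; and the face in the gap $(c_j,c_{j+1})$ of a block contains $\overline{c_j}$ together with $\overline{\max O'}$ for each of the $s(R)$ blocks sitting directly in that gap, so has size $s(R)+1$. (Your alternative --- proving the mirrored Möbius recursion $\Abs(\operatorname{M\ddot{o}b}(\pi,1_n))=\mathsf{Cat}_{s-1}\prod_i\Abs(\operatorname{M\ddot{o}b}(\sigma_i,1))$ by induction --- also works, but equally needs to be written out.) Supply one of these two verifications and the argument is complete.
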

To finish this section, we remark that the map $t \mapsto \pi(t)$ actually provides a bijection between $\mathsf{BSch}(n)$ and $\mathsf{NCInt}(n)$, for any $n\geq 1$. As a consequence, we can conclude that $| \mathsf{BSch}(n)| = |\mathsf{NCInt}(n)| = 2^{n-1}$.

\begin{comment}
\
\unsure{maybe for latter}
Let $n \geq 0$. Each element of $\mathsf{Sch}(n)$ corresponds to a \emph{face of the associahedron of dimension $n$}. The face lattice of the associahedron has a natural poset structure, given by reverse inclusion.

{\color{red}
\begin{conjecture}
    There is a Galois connection between the face poset of the $n$-associahedron (with underlying set being $\mathsf{Sch}(n)$), and $\mathsf{NC}(n)$ with the reverse refinement.
\end{conjecture}
}
\end{comment}

\section{Hopf algebra structure on the double tensor module}
\label{sec:doubletensor}

In this section, we present the definition of the double tensor Hopf algebra introduced by Ebrahimi-Fard and Patras in \cite{EFP:2015}. We also describe how the coproduct can be split into two parts, generating a shuffle (or dendriform) structure on the dual of the double tensor Hopf algebra. Finally, we explain how the shuffle structure allows us to construct three different bijections which resemble the relation between a Lie group and its Lie algebra.

\

We denote by $\mathbb{K}$ a base field of characteristic $0$ and by $\mathbb{C}$ the field of complex numbers. The vector spaces considered in this work are $\mathbb{C}$-vector spaces. We mention that the constructions presented in this section can also be done considering a base field $\mathbb{K}$.

\subsection{Definition of the double tensor Hopf algebra} Let $V$ be a vector space. If $k \geq 0$, we write elementary tensors
from $V^{\otimes k}$ as words, $u_1u_2\cdots u_k$, with $u_i \in V$ for $1\leq i\leq k$. Also, we identify $V^{\otimes 0}$ with $\mathbb{C}$. Then, we call the vector spaces of linear combinations of words and non-empty words \[\mathsf{T}(V):=\bigoplus_{k \, \geq \, 0}V^{\otimes k} \qquad \text{ and } \qquad \mathsf{T}_+(V):=\bigoplus_{k \, \geq \, 1}V^{\otimes k}\]
as the \emph{tensor module} and the {\it augmented tensor module} generated by $V$, respectively.

\

It also makes sense to consider the tensor module of $\tT_+(V)$, denoted by $\tT(\tT_+(V))$. To avoid confusion, the second tensor product appearing in the construction of the tensor module of $\tT_+(V)$ is denoted by a bar, i.e.~if $w_1,w_2,\ldots,w_n\in \tT_+(V)$, then we write
$$w_1|w_2|\cdots| w_n \in \tT_+(V)^{\otimes n}.$$

As a vector space, $\tT(\tT_+(V))$ is graded, where for any $n\geq0$, the $n$-th graded component is given by

\begin{equation}
    \label{eq:grading}
    \mathsf{T}(\mathsf{T}_+(V))_n:= \bigoplus_{\substack{n_1 + \cdots + n_k = n\\n_i\geq0}}V^{\otimes n_1} \oplus \cdots \oplus V^{\otimes n_k}.
\end{equation}

It turns out that the tensor module $\tT(\tT_+(V))$ can be endowed with a product and a coproduct respecting the grading \eqref{eq:grading} as follows:

\

%In particular, by taking the vector space $\tT_+(V)$, we define {\it double tensor Hopf algebra} as a non-commutative and non-cocommutative Hopf algebra structure on the vector space $\mathsf{T}(\mathsf{T}_+(V))$, introduced in \cite{EFP:2015}. For each $n \geq 0$, we consider the following graduation for 
%Elements in $\mathsf{T}(\mathsf{T}_+(V))_n$ are written as (linear combinations of) words with bars $w_1| \cdots |w_k$, where $w_i \in V^{\otimes  n_i}$ for some $n_1 + \cdots + n_k = n$. Each component $w_i$ is called an {\bf ordered block} of $w_1| \cdots |w_k$, for each $1 \leq i \leq k$. We will identify a bar symbol such as $w_1|1|w_2$ with %$w_1|w_2$.

$\bullet$ {\textit{Product rule}}: if $v= v_1|\cdots |v_r \in \mathsf{T}_+(V)^{\otimes r}$ and $w= w_1|\cdots |w_s \in \mathsf{T}_+(V)^{\otimes s}$, then the product of $v$ and $w$ is the element
\[v|w:= v_1|\cdots |v_r|w_1|\cdots| w_s \in  \mathsf{T}_+(V)^{\otimes(r+s)}.\]
In addition, if $\uno$ stands for the empty word, then we identify $w|\uno = w = \uno|w$ for any $w\in \tT_+(V)$ so that $\uno$ works as a unit.

\

$\bullet$ {\textit{Coproduct rule}}: the definition of the coproduct involves extra technical notions. Given a word $w=u_1 \cdots u_n \in V^{\otimes n}$ and $I=\{i_1, \hdots, i_k\}\subset \mathbb{N}$ where $i_1<\cdots <i_k$, we write the \textit{restriction of $w$ to $I$} by 
\begin{equation}
    w_I:=u_{i_1}\cdots u_{i_k}.
\end{equation} 
If $I \cap [n]=\emptyset$, we set $w_I:=\uno$. 

\

A set $I \subseteq \mathbb{N}$ is an {\it interval} if $I$ is a set of consecutive positive integers. For a pair of nested intervals  $I \subseteq J \subseteq [n]$, the set $J\setminus I$ can be decomposed (uniquely) into the disjoint union 
\[J \setminus I = K_1 \sqcup K_2 \sqcup \cdots \sqcup K_r,\] where $K_1, \hdots, K_r$ are intervals of $[n]$, such that no union of two such blocks is an interval. Also, the above list is ordered increasingly according to the minimal elements. Therefore, $\min K_1 < \cdots < \min K_r$. We call the sequence $K(I,J):=\{K_1, \hdots, K_r\}$ the \emph{the decomposition of $J \setminus I$ into its connected components}. With this, we set
\begin{equation}
    w^{(I,J)}:=w_{K_1}| \cdots | w_{K_r}\in \tT(\tT_+(V)).
\end{equation}

If $J=[n]$, we set $K(I):= K(I,[n])$ and $w^{(I)}:=w^{(I,J)}$. 

\begin{example}
Let $V$ be a vector space. For $n=9$ and $w=u_1 \cdots u_9\in V^{\otimes 9}$, let $J=\{2,3,4,6,7,8,9\}$ and $I=\{4,7,9\}$, so that $J\setminus I=\{2,3,6,8\}$. Then, the elements $w^{(I,J)}$, $w^{(I)}$ and $w^{(J)}$ are, respectively,
\[w^{(I,J)}=u_2u_3 |u_6|u_8, \qquad  w^{(I)}=u_1u_2u_3 | u_5u_6 | u_8 \quad \text{ and } \quad w^{(J)} = u_1|u_5.\]
%The last two elements were calculated considering $I \subseteq [9]$ and $J \subseteq [9]$. 
\end{example}
With this notation, define now the map $\Delta: \mathsf{T}_+(V) \to \mathsf{T}(V)\otimes\mathsf{T}(\mathsf{T}_+(V))$ by

\begin{eqnarray}
 \label{eq:defcoproduct}   \Delta(w):&=&\sum_{I \, \subseteq \, [n]}w_I \otimes w^{(I)}\\
&=&\sum_{I \, \subseteq \, [n]}w_I \otimes w_{K_1}|\cdots | w_{K_r}, \nonumber
\end{eqnarray}
where $K(I) = \{K_1, \hdots, K_r\}$.

\

To complete the definition, we extend the map $\Delta$ multiplicatively to all of $\mathsf{T}(\mathsf{T}_+(V))$, by setting $\Delta(\uno) = \uno\otimes \uno$ and
\[\Delta(w_1|\cdots |w_k):=\Delta(w_1)| \cdots | \Delta(w_k),\]
for any $w_1,\ldots,w_k\in \tT_+(V).$ Finally, we define the counit map $\epsilon:\tT(\tT_+(V))\to\mathbb{C}$ as the algebra morphism given by $\epsilon(\uno)=1$ and 
$\epsilon(w) = 0$ for any $w\in \tT_+(\tT_+(V))$.

\begin{example}
Let $V$ be a vector space and $a_1,a_2,a_3\in V$. Then, we have the following computations for the coproduct in \eqref{eq:defcoproduct}:
\begin{eqnarray*}
    \Delta(a_1) &=& a_1\otimes \uno + \uno\otimes a_1,
    \\ \Delta(a_1a_2) &=& a_1a_2\otimes \uno + a_1\otimes a_2 + a_2\otimes a_1 + \uno\otimes a_1a_2,
    \\ \Delta(a_1a_2a_3) &=& a_1a_2a_3\otimes \uno + a_1a_2\otimes a_3 + a_1a_3\otimes a_2 + a_2a_3\otimes a_1 \\ & & + a_1\otimes a_2a_3 + a_2\otimes a_1|a_3 + a_3\otimes a_1a_2 + \uno\otimes a_1a_2a_3.
\end{eqnarray*}
\end{example}

\begin{comment}
For example, we have
\[\Delta(ab)=1\otimes ab + a \otimes b+ b\otimes a + ab \otimes 1;\]
\[\Delta(a|b)=1\otimes a|b + a \otimes b+ b\otimes a + a|b \otimes 1;\]
\[\Delta(abc)= 1 \otimes abc + a \otimes bc + b \otimes a|c + c \otimes ab + ab \otimes c + ac \otimes b + bc \otimes a + 1 \otimes abc;\]
\[\Delta(a|bc)= 1 \otimes a|bc + a \otimes bc + b \otimes a|c + c \otimes a|b + a|b \otimes c + a|c \otimes b + bc \otimes a + 1 \otimes a|bc;\]
\[\Delta(ab|c)= 1 \otimes ab|c + a \otimes b|c + b \otimes a|c + c \otimes ab + ab \otimes c + a|c \otimes b + b|c \otimes a + 1 \otimes ab|c;\]
\[\Delta(a|b|c)= 1 \otimes a|b|c + a \otimes b|c + b \otimes a|c + c \otimes a|b + a|b \otimes c + a|c \otimes b + b|c \otimes a + 1 \otimes a|b|c.\]
\end{comment}

The iterated coproduct is recursively defined by $\Delta^{[m+1]} := (\id^{\otimes (m-1)}\otimes \Delta)\circ \Delta^{[m]}$ for $m\geq1$, where $\id$ stands for the identity map on $\tT(\tT_+(V))$, $\Delta^{[1]} = \id$ and $\Delta^{[2]} = \Delta$. Then, for $m\geq2$ and a non-empty word $w = u_1\cdots u_n \in \tT_+(V)$, we have:

\begin{equation}\label{iterated}
\Delta^{[m]}(w)= \sum_{I=I_1 \, \subseteq \, I_2 \, \subseteq \cdots \, \subseteq I_{m-1} \, \subseteq [n]}w_I \otimes w^{(I_1, I_2)}\otimes w^{(I_2, I_3)}\otimes \cdots \otimes w^{(I_{m-1})}.
\end{equation} 

It turns out that the tensor module $\tT(\tT_+(V))$ endowed with the bar (concatenation) product and the coproduct described in \eqref{eq:defcoproduct} is a graded bialgebra which in addition is \textit{connected}, i.e.~$\tT(\tT_+(V))_0 \cong \mathbb{K}$. Since any connected bialgebra is a Hopf algebra (see, for instance, \cite[Lem. 7.6.2]{Radford}), we have the following result.

\begin{theorem}[\cite{EFP:2015}]
    The tensor module $\tT(\tT_+(V))$ endowed with the bar product and the coproduct in \eqref{eq:defcoproduct} is a graded connected non-commutative and non-cocommutative Hopf algebra.
\end{theorem}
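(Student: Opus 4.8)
The plan is to verify the bialgebra axioms for the bar product together with the coproduct $\Delta$ of \eqref{eq:defcoproduct}, then to record connectedness and invoke the cited fact that a connected graded bialgebra carries a unique antipode; the two non-(co)commutativity assertions will follow from explicit witnesses. The organizing remark is that $\tT(\tT_+(V))$ with the bar product is the free unital associative algebra on the vector space $\tT_+(V)$, so any linear map out of $\tT_+(V)$ extends uniquely to an algebra morphism. Since $\Delta$ and $\epsilon$ are by definition the multiplicative extensions of their restrictions to $\tT_+(V)$, they are algebra morphisms, and the compatibility axioms $\Delta(x|y)=\Delta(x)\Delta(y)$ and $\epsilon(x|y)=\epsilon(x)\epsilon(y)$ hold by construction. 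Hence the counit identities and coassociativity are equalities between algebra morphisms and need only be checked on the generators $w=u_1\cdots u_n\in V^{\otimes n}$. The counit axiom is then immediate: in $(\epsilon\otimes\id)\Delta(w)=\sum_{I\subseteq[n]}\epsilon(w_I)\,w^{(I)}$ only $I=\emptyset$ survives (with $w^{(\emptyset)}=w$), while in $(\id\otimes\epsilon)\Delta(w)=\sum_{I}w_I\,\epsilon(w^{(I)})$ only $I=[n]$ survives (with $w_{[n]}=w$), so both equal $w$.

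The substance is coassociativity, and the plan is to show that $(\Delta\otimes\id)\Delta$ and $(\id\otimes\Delta)\Delta$ both coincide with the explicit triple coproduct of \eqref{iterated},
\[
\Delta^{[3]}(w)=\sum_{I_1\subseteq I_2\subseteq[n]}w_{I_1}\otimes w^{(I_1,I_2)}\otimes w^{(I_2)},
\]
where $w^{(I_1,I_2)}$ records the decomposition of $I_2\setminus I_1$ into its maximal blocks of elements consecutive within $I_2$. This reduces to two transitivity identities for the connected-components construction. For fixed $I_2$ one needs $\Delta(w_{I_2})=\sum_{I_1\subseteq I_2}w_{I_1}\otimes w^{(I_1,I_2)}$, which is little more than the definition: applying $\Delta$ to the length-$|I_2|$ word $w_{I_2}$ sums over its position-subsets, and decomposing the complementary positions into runs inside $w_{I_2}$ is exactly the decomposition defining $w^{(I_1,I_2)}$. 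For fixed $I_1$ one needs $\Delta(w^{(I_1)})=\sum_{I_2\supseteq I_1}w^{(I_1,I_2)}\otimes w^{(I_2)}$; combined with the outer sums, the first identity yields $(\Delta\otimes\id)\Delta=\Delta^{[3]}$ and the second yields $(\id\otimes\Delta)\Delta=\Delta^{[3]}$.

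The real content, and the step I expect to be the main obstacle, is the second identity and its bar-bookkeeping. Writing $w^{(I_1)}=w_{K_1}|\cdots|w_{K_r}$ for the maximal runs $K_j$ of $[n]\setminus I_1$ and expanding $\Delta$ factorwise, a choice of $T_j\subseteq K_j$ assembles into $I_2=I_1\sqcup\bigsqcup_j T_j$. One must then match, on the nose, the product of the inner left legs with $w^{(I_1,I_2)}$ and the product of the inner right legs with $w^{(I_2)}$. The delicate point is that the left leg of $\Delta$ is always a single bar-free word whereas the right leg may carry bars: two positions non-adjacent in $[n]$ can still land in one bar-free factor once every position between them (precisely the elements of $K_j\setminus T_j$) has been absorbed into an earlier leg, while positions separated by a surviving element of $I_1\subseteq I_2$ stay in different factors. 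The key verification is that this merging is governed exactly by adjacency inside $I_2$, so that the nonempty $T_j$ are precisely the maximal $I_2$-blocks of $I_2\setminus I_1$ (giving $w^{(I_1,I_2)}$), while $[n]\setminus I_2=\bigsqcup_j(K_j\setminus T_j)$ reorganizes into the runs defining $w^{(I_2)}$.

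It remains to assemble the conclusion. Connectedness follows from the grading \eqref{eq:grading}: a bar-word of degree $0$ forces every tensor factor to be empty, so $\tT(\tT_+(V))_0=\mathbb{C}\,\uno\iso\mathbb{C}$. A connected graded bialgebra admits a unique antipode built recursively on graded components, upgrading the bialgebra to a Hopf algebra. Non-commutativity is clear since $a_1|a_2\neq a_2|a_1$ for linearly independent $a_1,a_2\in V$. Non-cocommutativity is witnessed by the summand $a_2\otimes a_1|a_3$ of $\Delta(a_1a_2a_3)$: its image $a_1|a_3\otimes a_2$ under the flip is not a summand of $\Delta(a_1a_2a_3)$, because the left tensor leg of $\Delta$ is always bar-free; thus $\Delta$ differs from the opposite coproduct.
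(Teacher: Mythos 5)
Your proposal is correct, and it is worth noting at the outset that the paper itself offers no proof of this statement: it is quoted from Ebrahimi-Fard--Patras with a citation, the only in-text justification being the remark that a connected graded bialgebra is automatically a Hopf algebra. So your write-up supplies a verification the paper omits, and it does so along the standard lines one would expect: freeness of the bar product reduces everything to generators, the counit axiom is a two-line computation, coassociativity is identified with the two ``transitivity'' identities behind the triple-coproduct formula \eqref{iterated}, and connectedness plus the recursive antipode close the argument. The witnesses for non-commutativity and non-cocommutativity are both valid; in particular the observation that the left tensor leg of $\Delta$ on a single word is always bar-free, so the flip of $a_2\otimes a_1|a_3$ cannot occur, is exactly the right way to see $\Delta\neq\Delta^{\mathrm{op}}$.

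One point deserves emphasis because you got it right where a literal reading of the paper would lead you astray. The paper defines $K(I,J)$ as a decomposition of $J\setminus I$ into ``intervals of $[n]$,'' i.e.\ maximal runs of consecutive integers in the ambient set. Under that reading, coassociativity fails and so does \eqref{iterated}: for $w=u_1u_2u_3$, $I_1=\emptyset$, $I_2=\{1,3\}$, the ambient reading gives $w^{(I_1,I_2)}=u_1|u_3$, whereas $(\id\otimes\Delta)\Delta(w)$ actually produces the bar-free middle leg $u_1u_3$ (the position $2$ having been absorbed into the third leg). Your formulation --- maximal blocks of elements consecutive \emph{within} $I_2$ --- is the correct one, and your third paragraph isolates precisely the verification that makes the second transitivity identity work: nonempty $T_j\subseteq K_j$ assemble into the $I_2$-relative components of $I_2\setminus I_1$ because distinct $K_j$ are separated by elements of $I_1\subseteq I_2$, while gaps internal to a single $K_j$ lie outside $I_2$ and therefore merge. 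Two trivial loose ends you may as well record: the coproduct respects the grading since $|w_I|+|w^{(I)}|=n$ in \eqref{eq:defcoproduct}, and your non-commutativity witness $a_1|a_2\neq a_2|a_1$ needs $\dim V\geq 2$ (for $\dim V=1$ use $a|aa\neq aa|a$ instead).
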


\begin{remark}
    The Hopf algebra $\tT(\tT_+(V))$ in the previous theorem is called the \emph{double tensor Hopf algebra on $V$}. The grading and the connectedness of the Hopf algebra allow us to prove the existence of its antipode $S$ from the so-called \textit{Bogoliubov formula}\footnote{This terminology is in correspondence with Bogoliubov-Shirkov’s recursive formula for renormalization, as in \cite{Bogoliubov1959}.}
    \begin{equation}
        0 = S(w) + w + m_{\tT(\tT_+(V))}\circ (S\otimes\id)\circ \overline{\Delta}(w),
    \end{equation}
    for any $w\in \tT_+(V)$,  where $m_{\tT(\tT_+(V))}$ stands for the multiplication on $\tT(\tT_+(V))$ and $\overline{\Delta}$ stands for the \textit{reduced coproduct}, i.e.~$\overline{\Delta}(w) := \Delta(w) - \uno\otimes w - w\otimes \uno$, for any $w\in \tT_+(\tT_+(V))$.
\end{remark}

\subsection{Shuffle algebra structure on $\operatorname{Lin}(\tT(\tT_+(V)),\mathbb{C})$} One fundamental property of the coproduct on the double tensor Hopf algebra $\tT(\tT_+(V))$ is that it can be split into two half-coproducts. More precisely, given a word $w=a_1\cdots a_n\in V^{\otimes n}$, we consider the maps
\begin{eqnarray*}
    \Delta_\prec(w) &=& \sum_{1\in A\subseteq [[n]} w_A\otimes w^{(A)},
    \\ 
    \Delta_\succ(w) &=& \sum_{1\not\in A\subseteq [[n]} w_A\otimes w^{(A)}.
\end{eqnarray*}
Clearly, we have that $\Delta(w) = \Delta_\prec + \Delta_\succ$. The previous two maps are extended to $\tT_+(\tT_+(V))$ by declaring
\begin{eqnarray*}
    \Delta_\prec(w_1|w_2|\cdots | w_n) &=& \Delta_\prec(w_1)\Delta(w_2)\cdots \Delta(w_n),\\
    \Delta_\succ(w_1|w_2|\cdots | w_n) &=& \Delta_\succ(w_1)\Delta(w_2)\cdots \Delta(w_n),
\end{eqnarray*}
for any non-empty words $w_1,w_2\ldots,w_n\in \tT_+(V)$. 

\

It is not difficult to see that both maps are not coassociative. Instead, the authors of \cite{EFP:2015} proved that $\tT(\tT_+(V))$ together with $\Delta_\prec$ and $\Delta_\succ$ satisfy the definition of a \textit{counital unshuffle bialgebra} (also known as \textit{codendriform coalgebra}, see \cite[Thm. 5]{EFP:2015} and \cite{Foissy07} for a precise definition). This property translates to the dual space $\operatorname{Lin}(\tT(\tT_+(V)),\mathbb{C})$ as follows: for the coassociative coproduct $\Delta$ as well as $\Delta_\prec$ and $\Delta_\succ$, which from now we will call \textit{left and right half-unshuffle coproducts}, respectively, we consider the dual maps
\begin{eqnarray*}
    f*g &=& m_\mathbb{C}\circ(f\otimes g)\circ \Delta,\\
    f\prec g &=& m_\mathbb{C}\circ(f\otimes g)\circ \Delta_\prec,\\
    f\succ g &=& m_\mathbb{C}\circ(f\otimes g)\circ \Delta_\succ,
\end{eqnarray*}
for any $f,g\in \operatorname{Lin}(\tT_+(\tT_+(V)),\mathbb{C})$, where $m_\mathbb{C}$ stands for the associative product on $\mathbb{C}$. We call $\prec$ and $\succ$ the \textit{left and right half-shuffle product}, respectively. Neither of them is associative; nevertheless, they satisfy the \textit{shuffle identities}:
\begin{eqnarray}
\nonumber  (f\prec g)\prec h &=& f\prec(g*h),
    \\ (f\succ g)\prec h &=& f\succ (g\prec h), \label{eq:shuffle2}   
    \\ (f*g)\succ h &=& f\succ (g\succ h), \nonumber   
\end{eqnarray}
for any $f,g,h\in \operatorname{Lin}(\tT_+(\tT_+(V)),\mathbb{C})$. In general, any vector space $D$ together with two non-associative products $\prec$ and $\succ$ satisfying the shuffle identities with $* = \prec+\succ$ is called a \textit{shuffle algebra} (also known as \textit{dendriform algebra}, see \cite{Loday2001}). By setting $f\prec \epsilon = f = \epsilon\succ f$ and $\epsilon\prec f = 0 = f\succ f$, where $\epsilon$ is the counit on $\tT(\tT_+(V))$, we have:

\begin{theorem}[\cite{EFP:2015}]
    $\big(\!\operatorname{Lin}(\tT(\tT_+(V)),\mathbb{C}\big)$ is a unital shuffle algebra.
\end{theorem}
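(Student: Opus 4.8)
The plan is to obtain the statement by dualizing the codendriform (counital unshuffle) coalgebra structure carried by $\tT(\tT_+(V))$, established in \cite[Thm.~5]{EFP:2015}. That structure amounts to the three \emph{co-shuffle identities}
\begin{align*}
(\Delta_\prec\otimes\id)\circ\Delta_\prec &= (\id\otimes\Delta)\circ\Delta_\prec,\\
(\Delta_\succ\otimes\id)\circ\Delta_\prec &= (\id\otimes\Delta_\prec)\circ\Delta_\succ,\\
(\Delta\otimes\id)\circ\Delta_\succ &= (\id\otimes\Delta_\succ)\circ\Delta_\succ,
\end{align*}
together with the counit relations, and the claim is that these dualize one-for-one into the shuffle identities \eqref{eq:shuffle2} and the unital axioms.

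First I would record that the convolution products are well defined on all of $\operatorname{Lin}(\tT(\tT_+(V)),\mathbb{C})$: since the Hopf algebra is graded and $\Delta$ is locally finite (each element has a finite coproduct by the grading), every evaluation $(f*g)(w)$, $(f\prec g)(w)$, $(f\succ g)(w)$ is a finite sum. I would also note that $\Delta_\prec,\Delta_\succ$ land in $\tT(V)\otimes\tT(\tT_+(V))$, and that $\tT(V)$ embeds in $\tT(\tT_+(V))$ as the bar-free words together with $\uno$, so restricting functionals to the left factor causes no trouble.

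Next I would carry out the dualization for one identity to fix the pattern. Unfolding the definitions for $f,g,h\in\operatorname{Lin}(\tT(\tT_+(V)),\mathbb{C})$ gives
\[
(f\prec g)\prec h = m_\mathbb{C}\circ(m_\mathbb{C}\otimes\id)\circ(f\otimes g\otimes h)\circ(\Delta_\prec\otimes\id)\circ\Delta_\prec
\]
and
\[
f\prec(g* h) = m_\mathbb{C}\circ(\id\otimes m_\mathbb{C})\circ(f\otimes g\otimes h)\circ(\id\otimes\Delta)\circ\Delta_\prec.
\]
Because $m_\mathbb{C}$ is associative, the two outer maps $m_\mathbb{C}\circ(m_\mathbb{C}\otimes\id)$ and $m_\mathbb{C}\circ(\id\otimes m_\mathbb{C})$ coincide, so the two expressions agree precisely by the first co-shuffle identity. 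The second and third identities of \eqref{eq:shuffle2} follow identically from the second and third co-shuffle identities. For the unit, the counit axiom $(\epsilon\otimes\id)\circ\Delta=\id=(\id\otimes\epsilon)\circ\Delta$, split along the two halves, dualizes to $f\prec\epsilon=f=\epsilon\succ f$ and $\epsilon\prec f=0=f\succ\epsilon$, which are exactly the unital shuffle axioms.

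The only real content, and hence the main obstacle, lies in the co-shuffle identities themselves. If one does not simply cite \cite{EFP:2015}, they must be checked on a word $w=a_1\cdots a_n$ directly from the subset-sum formula for $\Delta$, splitting according to whether the position of the first letter lies in the chosen subset; iterating $\Delta$ organizes the choices into nested chains $A\subseteq B\subseteq[n]$, and one must verify that the half singled out on each side matches, and moreover that extending $\Delta_\prec,\Delta_\succ$ multiplicatively across the bars does not disturb the coidentities. Granting those identities, the passage to the dual is the purely formal computation above.
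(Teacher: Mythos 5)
Your proposal is correct and follows essentially the same route as the paper, which likewise obtains the shuffle algebra structure by dualizing the counital unshuffle (codendriform) bialgebra structure of $\tT(\tT_+(V))$ established in \cite[Thm.~5]{EFP:2015}, with the three co-shuffle identities you list being exactly the ones that dualize to \eqref{eq:shuffle2}. Your formal unfolding of the dual products and your treatment of the unit via the split counit axiom match the paper's (largely implicit) argument, so there is nothing further to add.
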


From the general theory of Hopf algebras, the exponential convolution defines a bijection between two special subsets in $\operatorname{Lin}(\tT(\tT_+(V)),\mathbb{C})$. More precisely, recall that a linear functional $\Phi$ on $\tT(\tT_+(V))$ is a \textit{character} if it is a unital algebra morphism. On the other hand, a linear functional $\alpha$ on $\tT(\tT_+(V))$ is an \textit{infinitesimal character} if $\alpha(\uno)=0$ and $\alpha(w_1|w_2)=0$ for any $w_1,w_2\in \tT_+(V)$. We denote $G$ and $\mathfrak{g}$ as the sets of characters and infinitesimal characters, respectively. The coassociativity of $\Delta$ implies that $G$ is a group with respect to the convolution product $*$. Besides, one can also show that $\mathfrak{g}$ is a Lie algebra with respect to the Lie bracket $[\alpha,\gamma]:= \alpha*\gamma-\gamma*\alpha$. With these notions, one has that the exponential convolution
\begin{equation}
    \mathfrak{g}\ni \alpha\mapsto \exp^*(\alpha) = \sum_{n\geq 0} \frac{\alpha^{* n}}{n!}\in G
\end{equation}
defines a bijection between $\mathfrak{g}$ and $G$. Furthermore, the half-shuffle products motivate the exponential-type maps
\begin{equation}
    \mathcal{E}_\prec(\alpha) = \sum_{n\geq0} \alpha^{\prec n}\quad\mbox{ and }\quad \E_\succ(\alpha) = \sum_{n\geq0} \alpha^{\succ n}, 
\end{equation}
for any $\alpha\in \mathfrak{g}$, where $\alpha^{\prec 0} = \alpha^{\succ 0}= \epsilon$, $\alpha^{\prec (n+1)} = \alpha\prec \alpha^{\prec n}$ for any $n\geq0$, and analogously, $\alpha^{\succ (n+1)} = \alpha^{\succ n}\succ \alpha$. Following the nomenclature, we call $\E_\prec(\alpha)$ and $\E_\succ(\alpha)$ the \textit{left and right half-shuffle exponentials of $\alpha$}, respectively. It turns out that the half-shuffle exponential maps also provide bijections between $\mathfrak{g}$ and $G$.

\begin{theorem}[\cite{EFP:2015,EFP18}]
    \label{thm:mainEFP}
    For a character $\Phi\in G$, there exists a unique triple of infinitesimal characters $(\kappa,\beta,\rho)\in \mathfrak{g}^3$ such that 
    \begin{equation}
    \label{eq:Exponentials}
        \Phi = \E_\prec(\kappa) = \E_\succ(\beta) = \exp^*(\rho).
    \end{equation}
    The infinitesimal characters $\kappa$ and $\beta$ are the unique solutions of the fixed point equations
    \begin{equation}
    \label{eq:fixedpointeqs}
        \Phi = \epsilon + \kappa\prec\Phi\quad\mbox{ and }\quad \Phi = \epsilon+\Phi\succ \beta,
    \end{equation}
    respectively. Conversely, $\E_\prec(\alpha), \E_\succ(\beta)$ and $\exp^*(\alpha)$ are characters, for any $\alpha\in\mathfrak{g}$.
\end{theorem}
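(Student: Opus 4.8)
The plan is to prove that each of the three maps $\E_\prec,\E_\succ,\exp^*\colon\mathfrak{g}\to G$ is a well-defined bijection; once this is known, the uniqueness of the triple $(\kappa,\beta,\rho)$ attached to a given $\Phi\in G$ is automatic, since necessarily $\kappa=\E_\prec^{-1}(\Phi)$, $\beta=\E_\succ^{-1}(\Phi)$ and $\rho=\log^*(\Phi)$. The first thing I would record is a completeness/convergence statement: because $\tT(\tT_+(V))$ is graded and connected, the convolution algebra $\operatorname{Lin}(\tT(\tT_+(V)),\mathbb{C})$ is complete for the filtration induced by the grading \eqref{eq:grading}, and an infinitesimal character $\alpha$ vanishes on $\uno$ and on bar-products. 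Since each convolution or half-shuffle factor must then pair with a homogeneous piece of degree at least $1$, the forms $\alpha^{*n}$, $\alpha^{\prec n}$ and $\alpha^{\succ n}$ annihilate every homogeneous element of degree $d<n$. Hence all three series in \eqref{eq:Exponentials} terminate degreewise and define genuine linear forms, and moreover take the value $1$ on $\uno$.

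For the convolution exponential I would invoke the standard theory of graded connected bialgebras. One first notes that $\alpha\in\mathfrak{g}$ is equivalent to $\alpha$ being an $\epsilon$-derivation, $\alpha(x|y)=\alpha(x)\epsilon(y)+\epsilon(x)\alpha(y)$, which follows from $\alpha(\uno)=0$, the vanishing of $\alpha$ on bar-products, and the fact that $\epsilon$ annihilates $\tT_+(\tT_+(V))$. Because $\Delta$ is an algebra morphism for the bar product, the iterated coproduct satisfies $\Delta^{[n]}(v|w)=\Delta^{[n]}(v)\,\Delta^{[n]}(w)$; pairing with $\alpha^{\otimes n}$ and using the derivation property yields the binomial identity $\alpha^{*n}(v|w)=\sum_{k}\binom{n}{k}\alpha^{*k}(v)\,\alpha^{*(n-k)}(w)$, and summing over $n$ shows that $\exp^*(\alpha)$ factorizes, i.e.\ is a character. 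Conversely, $\log^*=\sum_{n\geq1}\tfrac{(-1)^{n-1}}{n}(\Phi-\epsilon)^{*n}$ applied to a character lands in $\mathfrak{g}$ by running the same computation backwards, and $\exp^*,\log^*$ are mutually inverse degreewise. This gives the bijection $\exp^*\colon\mathfrak{g}\to G$.

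The half-shuffle maps are where the real work lies, and I would treat $\E_\prec$ in full and obtain $\E_\succ$ by the mirror argument. By bilinearity of $\prec$ and the definition of $\alpha^{\prec n}$ one gets the algebraic identity $\E_\prec(\alpha)=\epsilon+\alpha\prec\E_\prec(\alpha)$, so the first equation of \eqref{eq:fixedpointeqs} holds for $\Phi=\E_\prec(\kappa)$. Conversely, given $\Phi\in G$ I would solve $\Phi=\epsilon+\kappa\prec\Phi$ for $\kappa$ recursively: on a word $w$ of degree $n$, the top term of $\Delta_\prec(w)$ contributes exactly $\kappa(w)$ while every other term pairs $\kappa$ with a word of strictly smaller degree, so $\kappa(w)$ is determined by $\Phi$ and by $\kappa$ in lower degrees; this produces a unique linear form $\kappa$. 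The same recursion shows that the solution of $X=\epsilon+\kappa\prec X$ is unique for fixed $\kappa$, whence $\Phi=\E_\prec(\kappa)$. The two steps carrying the genuine content are (i) that the recursively defined $\kappa$ is actually an infinitesimal character, and dually (ii) that $\E_\prec(\kappa)\in G$ for every $\kappa\in\mathfrak{g}$. Both are established by induction on the grading, feeding in the counital unshuffle (codendriform) compatibility of $\Delta_\prec,\Delta_\succ$ with the bar product, in particular the relation $\Delta_\prec(w_1|\cdots|w_n)=\Delta_\prec(w_1)\Delta(w_2)\cdots\Delta(w_n)$, together with the shuffle identities \eqref{eq:shuffle2}, in order to transfer bar-multiplicativity of $\Phi$ to the vanishing of $\kappa$ on products and back. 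This interplay between the half-shuffle products and the bar product is the main obstacle; everything else is degreewise bookkeeping. Finally, the symmetric computation with $\Delta_\succ$ and the second equation of \eqref{eq:fixedpointeqs} produces the bijection $\E_\succ\colon\mathfrak{g}\to G$, which completes the proof.
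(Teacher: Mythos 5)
The paper does not prove this theorem: it is imported verbatim from the cited works of Ebrahimi-Fard and Patras, so there is no in-paper proof to compare against. Judged on its own, your proposal is a sound reconstruction of the standard argument from those references. The convergence observation (degreewise termination of the three series because an infinitesimal character kills $\uno$), the identification of $\mathfrak{g}$ with $\epsilon$-derivations, the binomial computation showing $\exp^*(\alpha)\in G$, the identity $\E_\prec(\alpha)=\epsilon+\alpha\prec\E_\prec(\alpha)$, and the degreewise solution of the fixed-point equations are all correct and are exactly how the result is established.

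Two remarks on the part you rightly flag as carrying the real content. First, the uniqueness of $\kappa$ as a solution of $\Phi=\epsilon+\kappa\prec\Phi$: your recursion pins $\kappa$ down only on single words $w\in\tT_+(V)$; the equation evaluated on bar-products $w_1|\cdots|w_k$ with $k\geq 2$ also determines $\kappa$ there (the term where the whole element lands in the left tensor leg again isolates $\kappa(w_1|\cdots|w_k)$ against $\Phi(\uno)=1$), so either run the same recursion on products or state uniqueness within $\mathfrak{g}$; this should be said explicitly. Second, for the steps (i) and (ii) you leave as a plan, the execution is shorter than you suggest and does not actually need the shuffle identities \eqref{eq:shuffle2}: writing $\Delta_\prec(x)=\sum x^{(1)}_\prec\otimes x^{(2)}_\prec$ and using $\Delta_\prec(x|y)=\Delta_\prec(x)\Delta(y)$ together with the facts that $x^{(1)}_\prec$ never has degree $0$ and that $\kappa$ is an $\epsilon$-derivation gives
\begin{equation*}
(\kappa\prec\Phi)(x|y)=\sum\kappa\bigl(x^{(1)}_\prec\bigr)\,\Phi\bigl(x^{(2)}_\prec|y\bigr),
\end{equation*}
and a single induction on degree (or the hypothesis that $\Phi$ is a character, in the converse direction) collapses this to $(\kappa\prec\Phi)(x)\Phi(y)$, which is precisely the transfer of multiplicativity in both directions. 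The mirror argument for $\Delta_\succ$ goes through with the roles of the empty and full subsets exchanged. So the proposal has no wrong step; it is merely a sketch at its crux, and the sketch points at the correct lemma.
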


The previous theorem says, in particular, that $\E_\prec(\alpha)$ is invertible with respect to the convolution product $*$, for any $\alpha\in\mathfrak{g}$. Even more, the following lemma exhibits a relation between both half-shuffle exponentials through the inverse with respect to $*$.

\begin{lemma}[{\cite[Lem. 2]{EFP:2015}}]
\label{lem:EFP15}
    For any $\alpha\in\mathfrak{g}$, we have that $\E_\prec(\alpha)^{*-1} = \E_\succ(-\alpha)$.
\end{lemma}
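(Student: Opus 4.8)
The plan is to show that $\E_\succ(-\alpha)$ is a right $*$-inverse of $\E_\prec(\alpha)$; since by \cref{thm:mainEFP} both $\E_\prec(\alpha)$ and $\E_\succ(-\alpha)$ are characters and $G$ is a group under $*$, a one-sided inverse is automatically two-sided, so this suffices. Write $X := \E_\prec(\alpha)$ and $Y := \E_\succ(-\alpha)$, and set $\bar X := X - \epsilon$, $\bar Y := Y - \epsilon$, which are reduced (they vanish on $\uno$, as $X,Y$ are characters). Reorganizing the defining sums $\E_\prec(\alpha) = \sum_{n\geq 0}\alpha^{\prec n}$ and $\E_\succ(-\alpha) = \sum_{n\geq 0}(-\alpha)^{\succ n}$, and using $\alpha \prec \epsilon = \alpha$ and $\epsilon \succ \alpha = \alpha$, one obtains the reduced fixed-point equations
\[
\bar X = \alpha + \alpha \prec \bar X, \qquad \bar Y = -\alpha - \bar Y \succ \alpha,
\]
equivalently $\alpha \prec \bar X = \bar X - \alpha$ and $\bar Y \succ \alpha = -\bar Y - \alpha$. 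Since $\epsilon$ is the $*$-unit, $X * Y = \epsilon + \bar X + \bar Y + \bar X * \bar Y$, so the claim $X * Y = \epsilon$ is equivalent to $\bar X + \bar Y + \bar X * \bar Y = 0$.

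First I would compute $W := \bar X * \bar Y = \bar X \prec \bar Y + \bar X \succ \bar Y$ by feeding in the fixed-point equations and collapsing with the shuffle identities \eqref{eq:shuffle2}. Substituting $\bar X = \alpha + \alpha \prec \bar X$ into the first summand and applying $(\alpha \prec \bar X)\prec \bar Y = \alpha \prec (\bar X * \bar Y)$ gives $\bar X \prec \bar Y = \alpha \prec \bar Y + \alpha \prec W$; substituting $\bar Y = -\alpha - \bar Y \succ \alpha$ into the second summand and applying $\bar X \succ (\bar Y \succ \alpha) = (\bar X * \bar Y) \succ \alpha$ gives $\bar X \succ \bar Y = -\bar X \succ \alpha - W \succ \alpha$. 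Adding these, $W$ satisfies the linear equation
\[
W - \alpha \prec W + W \succ \alpha = \alpha \prec \bar Y - \bar X \succ \alpha.
\]
Next I would check that the candidate $W_0 := -(\bar X + \bar Y)$ satisfies the same equation: plugging $W_0$ into the left-hand side and using $\alpha \prec \bar X = \bar X - \alpha$ together with $\bar Y \succ \alpha = -\bar Y - \alpha$, all the $\bar X$, $\bar Y$ and $\alpha$ contributions cancel, leaving exactly $\alpha \prec \bar Y - \bar X \succ \alpha$.

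The hard part is the uniqueness step that lets me conclude $W = W_0$, whence $\bar X + \bar Y + \bar X * \bar Y = \bar X + \bar Y + W_0 = 0$. I would argue that the operator $L(U) := U - \alpha \prec U + U \succ \alpha$ on reduced functionals is invertible. This rests on the connected grading of $\tT(\tT_+(V))$ together with $\alpha(\uno)=0$: evaluating $\alpha \prec U$ or $U \succ \alpha$ on a homogeneous element of degree $n$ pairs $\alpha$ against a tensor factor of degree $\geq 1$ and $U$ against the complementary factor of degree $\leq n-1$, so the degree-$n$ component of $\alpha \prec U$ and of $U \succ \alpha$ depends only on $U$ in strictly lower degrees. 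Hence $N(U) := -\alpha \prec U + U \succ \alpha$ is locally nilpotent, $L = \id + N$ is invertible with inverse $\sum_{k\geq 0}(-N)^k$ (a finite sum in each degree), and the equation for $W$ has a unique reduced solution. Therefore $W = W_0$, giving $X * Y = \epsilon$ and thus $\E_\prec(\alpha)^{*-1} = \E_\succ(-\alpha)$; the group structure of $G$ (or a symmetric computation of $Y * X$) confirms the inverse is two-sided.
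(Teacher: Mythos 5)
Your argument is correct, but note that the paper itself offers no proof of \cref{lem:EFP15}: it is imported verbatim from \cite{EFP:2015} as a citation, so there is no internal argument to compare against. What you supply is a complete, self-contained derivation, and every step checks out: the reduced fixed-point equations $\bar X=\alpha+\alpha\prec\bar X$ and $\bar Y=-\alpha-\bar Y\succ\alpha$ follow from \eqref{eq:fixedpointeqs} (or by reindexing the defining series) together with the unit conventions $\alpha\prec\epsilon=\alpha$, $\epsilon\succ\alpha=\alpha$; the linear equation $W-\alpha\prec W+W\succ\alpha=\alpha\prec\bar Y-\bar X\succ\alpha$ for $W=\bar X*\bar Y$ uses only the first and third identities of \eqref{eq:shuffle2}, applied to reduced functionals where they are valid; the candidate $W_0=-(\bar X+\bar Y)$ does satisfy the same equation; and the uniqueness step is sound, since $\alpha(\uno)=0$ forces both $\alpha\prec U$ and $U\succ\alpha$ in degree $n$ to depend only on $U$ in degrees $\le n-1$, so $\id+N$ is invertible degree by degree on reduced functionals. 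Working with $\bar X,\bar Y$ rather than $X,Y$ is also the right move, since the splitting $*=\prec+\succ$ is only valid on the augmentation ideal. Two cosmetic points: you only need the outer two shuffle identities, so it is worth saying explicitly that \eqref{eq:shuffle2}'s middle identity is unused; and the passage from the right inverse to the two-sided inverse is immediate from associativity of $*$ once one knows $X\in G$ has some two-sided inverse, which the paper guarantees via connectedness, so the appeal to \cref{thm:mainEFP} for $Y$ being a character is not strictly needed there.
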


\section{Algebraic approach to non-commutative probability}
\label{sec:ncp}

The objective of this section is to explain how the double tensor Hopf algebra construction provides a useful framework for non-commutative probability, in particular for non-commutative cumulants. We begin by giving the combinatorial definition of cumulants for the free, Boolean and monotone case. Then, we exhibit how cumulants can be considered as linear functionals on a double tensor Hopf algebra, and indicate how shuffle-algebraic relations effectively describe relations between moments and cumulants.

\subsection{Cumulants in non-commutative probability} Recall that a pair $(\A,\varphi)$ is a \textit{non-commutative probability space} if $\A$ is a unital associative algebra over $\mathbb{C}$ and $\varphi:\A\to\mathbb{C}$ is a linear functional such that $\varphi(1_\A) = 1$, where $1_\A$ is the unit of $\A$. Elements $a\in \A$ are called random variables, and $\varphi(a)$ is called the moment of $a$ with respect to $\varphi$.

\

A non-commutative analogue of independence may be considered algebraically as a rule to compute the joint distribution of independent random variables from their marginal distributions. Muraki in \cite{Muraki} proved that there are only five such notions of independence that satisfy certain natural conditions. For each notion of independence, a non-commutative probability theory can be constructed --  one can define analogous notions of convolutions,  central limit theorems and cumulants.

\

The notions of cumulants for \textit{free, Boolean}, and \textit{monotone independence} can be introduced by the so-called moment-cumulant formulas. Before stating the next definition, we fix the following notation: given a family $\{f_n:\calA^n\to\mathbb{C}\}_{n\geq1}$ of multilinear functionals, elements $a_1,\ldots,a_n\in \A$ and $\pi\in \NC(n)$, we write 
\begin{equation}
    f_\pi(a_1,\ldots,a_n) := \prod_{B\in \pi} f_{|B|}(a_1,\ldots,a_n|B),
\end{equation}
where $f_{|B|}(a_1,\ldots,a_n|B) := f_{|B|}(a_{i_1},\ldots,a_{i_\ell})$ if $B = \{i_1<\cdots< i_\ell\}$. The linear functional $\varphi$ provides an example of a family of multilinear functionals $\{\varphi_n:\A^{n}\to\mathbb{C}\}_{n\geq1}$ by writing $\varphi_n(a_1,\ldots,a_n) := \varphi(a_1\cdot_\A\ldots\cdot_\A a_n),$ where $\cdot_\A$ stands for the associative product of the algebra $\A$.
\begin{definition}
The \textit{free} (\cite{Spe94}), \textit{Boolean} (\cite{SpW}), \textit{and monotone} (\cite{HS11}) \textit{functionals cumulants} form, respectively, the families of multilinear functionals $\{k_n:\calA^n\to \mathbb{C}\}_{n\geq1}$, $\{b_n:\calA^n\to \mathbb{C}\}_{n\geq1}$, and $\{h_n:\calA^n\to \mathbb{C}\}_{n\geq1}$, implicitly defined by the equations
\begin{eqnarray}
\varphi_n(a_1,\ldots, a_n) &=& \sum_{\pi\in\NC(n)} k_{\pi}(a_1,\ldots,a_n), \label{eq:FreeMC}
\\\varphi_n(a_1,\ldots, a_n) &=& \sum_{\pi\in\mathsf{NCInt}(n)}  b_{\pi}(a_1,\ldots,a_n),\label{eq:BoolMC}
\\\varphi_n(a_1,\ldots, a_n) &=& \sum_{\pi\in\NC(n)} \frac{1}{f_\pi!} h_{\pi}(a_1,\ldots,a_n),\label{eq:MonMC}
\end{eqnarray}
for any $n\geq1$ and $a_1,\ldots,a_n\in \calA$, where $f_\pi$ stands for the forest of nestings of $\pi$. 
\end{definition}

\begin{remark}
It is not difficult to see that the previous combinatorial formulas indeed recursively define the free, Boolean and monotone cumulants. Actually, for the free and Boolean cases, we can take advantage of the Möbius inversion formulas in the posets $\NC(n)$ and $\mathsf{NCInt}(n)$, respectively, in order to obtain the corresponding cumulant-moment relations. However, this approach cannot be directly followed in the monotone case since the coefficients $\frac{1}{f_\pi!}$ in \eqref{eq:MonMC} do not satisfy the multiplicativity condition to apply Möbius inversion. 
\end{remark}

\begin{comment}
\begin{eqnarray}
	\label{eq:freeCM}
	k_n(a_1,\ldots,a_n) 
	&=& \sum_{\sigma \in \NC(n)} \operatorname{M\ddot{o}b}_{\mathsf{NC}(n)}(\sigma,1_n)\varphi_\sigma(a_1,\ldots,a_n),
	\\ \label{eq:BoolCM}
	b_n(a_1,\ldots,a_n) 
	&=& \sum_{\sigma \in \mathcal{I}(n)}(-1)^{|\sigma|-1}\varphi_\sigma(a_1,\ldots,a_n),
\end{eqnarray}
\end{comment}

\subsection{Shuffle-algebraic framework for non-commutative cumulants} 
In \cite{EFP:2015,EFP18}, the authors established a group-theoretical framework for the moment-cumulants relations based on a double tensor Hopf algebra described in \cref{sec:doubletensor}. More precisely, given a non-commutative probability space $(\A,\varphi)$, consider $\tT(\tT_+(\A))$ the double tensor Hopf algebra on $\A$. Then, we extend the linear functional $\varphi:\A\to\mathbb{C}$ to a character $\Phi:\tT(\tT_+(\A))\to\mathbb{C}$ by setting
\begin{equation}
    \label{eq:character}
    \Phi(w) = \varphi_n(a_1,\ldots, a_n),
\end{equation}
for any word $w=a_1\cdots a_n\in \A^{\otimes n},$ and extending multiplicatively and linearly for all the elements of $\tT(\tT_+(\A)).$ With these notions, the connection with non-commutative probability is stated in the following theorem.

\begin{theorem}[\cite{EFP:2015,EFP18}]
    \label{thm:link}
    Let $(\A,\varphi)$ be a non-commutative probability space and let $\Phi : \tT(\tT_+(\A))\to \mathbb{C}$ the extension of $\varphi$ to a character as described above. Let $(\kappa,\beta,\rho)$ be the unique triple of infinitesimal characters given in \cref{thm:mainEFP}. Then $\kappa,\beta$ and $\rho$ correspond with the free, Boolean and monotone cumulants, respectively:
\begin{equation}
    \kappa(w) = k_n(a_1,\ldots,a_n),\qquad \beta(w) = b_n(a_1,\ldots,a_n),\qquad \rho(w) = h_n(a_1,\ldots,a_n),
\end{equation}
for any $w=a_1\cdots a_n\in \A^{\otimes n}$.
\end{theorem}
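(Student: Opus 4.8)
The plan is to identify $\kappa,\beta,\rho$ one at a time, by exhibiting the relevant combinatorial cumulant as an infinitesimal character that solves the very equation characterising it in \cref{thm:mainEFP}, and then appealing to uniqueness. Define $\widetilde\kappa,\widetilde\beta,\widetilde\rho\in\mathfrak{g}$ on a single word $w=a_1\cdots a_n$ by $\widetilde\kappa(w)=k_n(a_1,\ldots,a_n)$, $\widetilde\beta(w)=b_n(a_1,\ldots,a_n)$, $\widetilde\rho(w)=h_n(a_1,\ldots,a_n)$, and by $0$ on $\uno$ and on every bar-product $w_1|\cdots|w_m$ with $m\ge 2$; each is manifestly an infinitesimal character. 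Since $\E_\prec$, $\E_\succ$ and $\exp^*$ are injective on $\mathfrak{g}$ by \cref{thm:mainEFP}, it suffices to prove $\Phi=\E_\prec(\widetilde\kappa)$, $\Phi=\E_\succ(\widetilde\beta)$ and $\Phi=\exp^*(\widetilde\rho)$, as these force $\widetilde\kappa=\kappa$, $\widetilde\beta=\beta$, $\widetilde\rho=\rho$.

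For the free cumulants I would check the fixed-point equation $\Phi=\epsilon+\widetilde\kappa\prec\Phi$ of \eqref{eq:fixedpointeqs}. Unfolding $\Delta_\prec$ on a word and using that $\Phi$ is a character yields $\varphi_n(a_1,\ldots,a_n)=\sum_{1\in I\subseteq[n]}\widetilde\kappa(w_I)\prod_{j}\varphi_{|C_j|}(w_{C_j})$, where $C_1,\ldots,C_p$ are the connected components of $[n]\setminus I$. This is exactly the free moment-cumulant formula \eqref{eq:FreeMC} reorganised according to the block $I$ of $\pi\in\NC(n)$ that contains $1$: the non-crossing condition forces every other block to lie in a single gap $C_j$ of $I$, so summing over $I$ and then over the induced non-crossing partitions of the gaps recovers $\sum_{\pi\in\NC(n)}k_\pi$. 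The Boolean case is parallel, via $\Phi=\epsilon+\Phi\succ\widetilde\beta$: now $\widetilde\beta(w^{(I)})$ vanishes unless $[n]\setminus I$ is connected, and since $1\notin I$ this forces $[n]\setminus I=\{1,\ldots,m\}$, so the equation becomes $\varphi_n=\sum_{m=1}^{n}\varphi_{n-m}(a_{m+1},\ldots,a_n)\,b_m(a_1,\ldots,a_m)$, which is \eqref{eq:BoolMC} split along the first interval block $\{1,\ldots,m\}$.

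The monotone case is where the content lies. I would prove, for every $k\ge 1$, the identity
\[
\widetilde\rho^{\,*k}(a_1\cdots a_n)=\sum_{\substack{(\pi,\lambda)\in\mathcal{M}(n)\\ |\pi|=k}} h_\pi(a_1,\ldots,a_n),
\]
summing over monotone partitions with exactly $k$ blocks. Granting this, $\exp^*(\widetilde\rho)(w)=\sum_{k}\tfrac1{k!}\widetilde\rho^{\,*k}(w)=\sum_{(\pi,\lambda)\in\mathcal{M}(n)}\tfrac1{|\pi|!}h_\pi=\sum_{\pi\in\NC(n)}\tfrac{\#\{\lambda:(\pi,\lambda)\in\mathcal{M}(n)\}}{|\pi|!}\,h_\pi$; as the monotone labellings of $\pi$ are precisely the linear extensions of the nesting forest $f_\pi$, of which there are $|\pi|!/f_\pi!$ (the tree factorial being the product of the subtree sizes), this collapses to $\sum_\pi \tfrac1{f_\pi!}h_\pi=\varphi_n$ by \eqref{eq:MonMC}. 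To obtain the displayed identity I would expand $\widetilde\rho^{\,*k}$ through the iterated coproduct \eqref{iterated}: a term survives only when all $k$ tensor slots are single nonempty words, whose supports $B_1,\ldots,B_k$ then partition $[n]$, with the slot index recording an order on the blocks. The bijection with $\mathcal{M}(n)$ comes from the recursive restriction structure of $\Delta^{[k]}$: peeling off a block as a restriction splits the word exactly at its positions, and one shows inductively that at stage $j$ the block $B_j$ reappears as a single word iff no already-peeled position lies strictly between $\min B_j$ and $\max B_j$. By the non-crossing property the blocks meeting $(\min B_j,\max B_j)$ are precisely those nested inside $B_j$, so this is the requirement that each block nested in $B_j$ be peeled later, i.e.\ that the order be a linear extension of $f_\pi$; the same computation shows the supports automatically form a non-crossing partition.

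The main obstacle is this last bijection. The subtle point is that in $\Delta^{[k]}$ the interior tensor factors are produced by restriction \emph{within the subword surviving the earlier stages}, not by taking connected components in $[n]$: a nested block such as $\{2,4\}$ inside $\{1,5\}$ legitimately appears as the single word $a_2a_4$ once the enclosing block has been removed, although it would split if connectedness were read in $[n]$. Keeping this bookkeeping straight---and matching it to the nesting poset so that the multiplicity of each $\pi$ becomes the number of linear extensions of $f_\pi$---is the crux; the free and Boolean arguments sidestep it entirely since they use only a single half-coproduct.
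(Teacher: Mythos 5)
Your proposal is correct and follows essentially the route the paper itself sketches (and attributes to \cite{EFP:2015,EFP18}): identify the combinatorial cumulants as infinitesimal characters, show that $\E_\prec$, $\E_\succ$ and $\exp^*$ applied to them reproduce the moment--cumulant sums \eqref{eq:FreeMC}--\eqref{eq:MonMC} on words, and invoke the uniqueness in \cref{thm:mainEFP}; your use of the fixed-point equations \eqref{eq:fixedpointeqs} for the free and Boolean cases is just the recursive form of the paper's first two displayed identities. You also correctly isolate and resolve the one genuinely delicate point --- that in $\Delta^{[k]}$ the connected components of $I_{j+1}\setminus I_j$ must be read relative to the surviving positions $[n]\setminus I_j$ rather than in $[n]$, which is what makes the surviving chains biject with monotone partitions and yields $\omega$-free coefficients $e(f_\pi)/|\pi|! = 1/f_\pi!$.
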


The strategy to prove the previous theorem is to observe that the three equations in \eqref{eq:Exponentials} recover the moment-cumulant relations \eqref{eq:FreeMC}, \eqref{eq:BoolMC} and \eqref{eq:MonMC} when evaluating on a word $w\in \tT_+(\A).$ More precisely, we have for any $w=a_1\cdots a_n \in \tT_+(\calA) $:

\begin{eqnarray*}
    \E_\prec(\kappa)(w) &=& \sum_{\pi\in\NC(n)} \prod_{B\in\pi}\kappa(w_B), 
\\\E_\succ(\beta)(w) &=& \sum_{\pi\in\mathsf{NCInt}(n)} \prod_{B\in\pi}\beta(w_B), 
\\\exp^*(\rho)(w) &=& \sum_{\pi\in\NC(n)} \frac{1}{f_\pi!}\prod_{B\in\pi}\rho(w_B). 
\end{eqnarray*}

\cref{thm:link} provides a shuffle-algebraic point of view for moments and non-commutative cumulants, which have been recently studied to understand in a unified way several notions in non-commutative probability theory such as additive convolutions (\cite{EFP18}), conditional free cumulants (\cite{EFP20}), and recently, solving the open problems of finding combinatorial relations to write monotone cumulants in terms of moments (\cite{AC22}), and free and Boolean cumulants (\cite{CEFPP22}).

\

Let us recall one context where the shuffle-algebraic framework for non-commutative probability is applied. Recall that in the classical case, if $X$ is a classical random variable with finite moments of all orders, there exists a particular family $\{W_n(x)\}_{n\geq0}$ of polynomials associated to $X$ closely related with the classical cumulants of $X$, called \textit{Wick polynomials}, which are determined by the properties $W_0(x)=1$ and
$$\mathbb{E}(W_n(X)) =0,\qquad \frac{\mathrm{d}}{\mathrm{d}x}W_n(x) = nW_{n-1}(x),\quad\mbox{ for any }\,n>0.$$
In \cite{Ansh04}, the author introduced the free counterpart of the Wick polynomials which satisfy analogous properties that of classical Wick polynomials. Roughly speaking, the free Wick polynomials appear by considering the family of free cumulants of a random variable instead of its classical cumulants. This situation is described in Hopf-algebraic terms as follows. Let $(\A,\varphi)$ be a non-commutative probability space, and consider the double tensor Hopf algebra $\tT(\tT_+(\A))$ as well as the character $\Phi$ on $\tT(\tT_+(\A))$ extending $\varphi$. Since $\Phi$ is a character, then $\Phi^{*-1}$ exists and we can define the \emph{free Wick map} $W:\tT(\tT_+(\A))\to \tT(\tT_+(\A))$ by the recipe
\begin{equation}
\label{eq:Wick}
    W := (\id\otimes \Phi^{*-1})\circ \Delta.
\end{equation}
The map $W$ was introduced in \cite{EFPTZ}, where the authors proved that the evaluation $W(a_1\cdots a_n)$ coincides with the free Wick polynomials introduced in \cite{Ansh04}. This can be stated in the following result.

\begin{proposition}[{\cite[Prop. 3.12]{Ansh04}}]
    Let $(\A,\varphi)$ be a non-commutative probability space with family of free cumulants $\{k_n\}_{n\geq1}$. Consider the double tensor Hopf algebra $\tT(\tT_+(\A))$ and the character $\Phi$ on $\tT(\tT_+(\A))$ extending $\varphi$. Also, consider the free Wick map $W$ defined in \eqref{eq:Wick}. Then for a word $w = a_1\cdots a_n\in \tT_+(\A)$, we have that
    \begin{equation}
    \label{eq:FreeWick}
     W(a_1\cdots a_n) = \sum_{\substack{B\subseteq [n]\\ B = \{i_1<\cdots < i_s\}}}  a_{i_1}\cdots a_{i_s}\sum_{\substack{\pi\in \mathsf{NCInt}([n]\backslash B)\\ \{B\}\cup \pi\in \NC(n)}} (-1)^{|\pi|} k_\pi\big(a_1,\ldots,a_n\,\big|\,[n]\backslash B\big),
    \end{equation}
    for any $n\geq1$ and  $a_1,\ldots,a_n\in \A$.
\end{proposition}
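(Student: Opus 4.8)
The plan is to unfold the definition $W = (\id \otimes \Phi^{*-1}) \circ \Delta$ and reduce everything to evaluating the convolution inverse $\Phi^{*-1}$ on the second tensor factor of the coproduct. Writing $w = a_1 \cdots a_n$ and applying the explicit coproduct \eqref{eq:defcoproduct}, one gets $W(w) = \sum_{I \subseteq [n]} \Phi^{*-1}(w^{(I)})\, w_I$, since $\Phi^{*-1}(w^{(I)})$ is a scalar. For $I = \{i_1 < \cdots < i_s\}$ the surviving factor is exactly $w_I = a_{i_1}\cdots a_{i_s}$, so after the identification $B := I$ the outer word of the claimed formula is already in place, and it remains only to show that $\Phi^{*-1}(w^{(I)})$ equals the inner sum $\sum_{\pi}(-1)^{|\pi|}k_\pi(a_1,\ldots,a_n\,|\,[n]\setminus B)$.

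To evaluate $\Phi^{*-1}$, I would invoke \cref{thm:mainEFP} and \cref{thm:link}, which give $\Phi = \E_\prec(\kappa)$ with $\kappa$ the infinitesimal character carrying the free cumulants, $\kappa(a_{j_1}\cdots a_{j_\ell}) = k_\ell(a_{j_1},\ldots,a_{j_\ell})$. By \cref{lem:EFP15} the convolution inverse is then $\Phi^{*-1} = \E_\prec(\kappa)^{*-1} = \E_\succ(-\kappa)$. Since $\Phi^{*-1}$ again lies in the group $G$ of characters (the convolution inverse of a character is a character, the target field being commutative), it is multiplicative for the bar product; hence, writing $K(I) = \{K_1,\ldots,K_r\}$ for the connected components of $[n]\setminus I$ so that $w^{(I)} = w_{K_1}|\cdots|w_{K_r}$, I obtain $\Phi^{*-1}(w^{(I)}) = \prod_{j=1}^{r} \E_\succ(-\kappa)(w_{K_j})$. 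Plugging in the explicit expansion of $\E_\succ$ recorded just after \cref{thm:link}, each factor reads $\E_\succ(-\kappa)(w_{K_j}) = \sum_{\sigma_j}(-1)^{|\sigma_j|} k_{\sigma_j}(\ldots)$, the sum running over interval partitions of the word $w_{K_j}$.

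Distributing the product over $j$ converts a choice of one interval partition $\sigma_j$ per component into a single partition $\pi = \sigma_1 \cup \cdots \cup \sigma_r$ of $[n]\setminus I$, with $(-1)^{|\pi|} = \prod_j (-1)^{|\sigma_j|}$ and $k_\pi = \prod_j k_{\sigma_j}$, giving $\Phi^{*-1}(w^{(I)}) = \sum_\pi (-1)^{|\pi|} k_\pi(a_1,\ldots,a_n\,|\,[n]\setminus I)$ where $\pi$ ranges over all partitions of $[n]\setminus I$ whose blocks are intervals of consecutive integers, each contained in a single component $K_j$. The heart of the argument — and the step I expect to be the most delicate — is the purely combinatorial identification of this index set with $\{\pi \in \mathsf{NCInt}([n]\setminus B) : \{B\}\cup\pi \in \NC(n)\}$. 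The point is that the $K_j$ are exactly the maximal runs of consecutive integers of $[n]$ lying in $[n]\setminus B$, so the blocks produced above are precisely the interval blocks of $[n]\setminus B$ that do not straddle any element of $B$; one checks that such an interval block can never cross $B$, so that $\{B\}\cup\pi$ is automatically non-crossing, matching the index displayed in the statement. Substituting this back and summing over $I = B$ reproduces \eqref{eq:FreeWick}; the only care required is to verify this equivalence of index sets against the natural reading of $\mathsf{NCInt}([n]\setminus B)$ as interval partitions into consecutive-integer blocks.
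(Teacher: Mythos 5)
Your argument is correct and follows essentially the same route as the paper's proof: unfold $W=(\id\otimes\Phi^{*-1})\circ\Delta$, identify $\Phi^{*-1}=\E_\succ(-\kappa)$ via \cref{lem:EFP15} so that it expands as a signed sum of free cumulants over interval partitions, use multiplicativity over the bar product, and recombine the product over the connected components $K_j$ into the displayed index set. The combinatorial identification you flag as delicate is exactly the step the paper also treats in one line (and, as you note, it hinges on reading $\mathsf{NCInt}([n]\setminus B)$ as partitions into consecutive-integer intervals), so nothing is missing relative to the published argument.
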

\begin{proof}
    Let $\kappa$ be the infinitesimal character such that $\Phi = \E_\prec(\kappa)$. By \cref{thm:link}, $\kappa$ identifies with the free cumulants when evaluating on a word $w\in \tT_+(\A)$. Also, \cref{lem:EFP15} implies that
     $\Phi^{*-1} = \E_\prec(\kappa)^{*-1} = \E_\succ(-\kappa)$. In the language of non-commutative probability, this means that $-\kappa$ is the infinitesimal character extending the Boolean cumulant functionals of $\Phi^{*-1}$. Then, we have 
    \begin{equation}
    \label{eq:PhiInvBool}
        \Phi^{*-1}(w) = \sum_{\pi\in \mathsf{NCInt}(n)} \prod_{B\in \pi} \big(\!-\kappa(w_B)\big) = \sum_{\pi\in\mathsf{NCInt}(n)} (-1)^{|\pi|}k_\pi(a_1,\ldots,a_n),
    \end{equation}
    for $n\geq 1$ and any word $w=a_1\cdots a_n \in \A^{\otimes n}$. On the other hand, the definition of $W$ and  the fact that $\Phi^{*-1}$ is multiplicative, we have for a word $w=a_1\cdots a_n$:
    \begin{eqnarray*}
        W(w) &=& (\id\otimes \Phi^{*-1})\circ \Delta(w)
        \\ &=& \sum_{B\subseteq[n]} w_B\prod_{i=1}^s \Phi^{*-1}(w_{K_i}),
        \end{eqnarray*}
    where $K(B) = \{K_1,\ldots,K_s\}$ is the decomposition of $[n]\setminus B$ into its connected components. Notice that we can use \eqref{eq:PhiInvBool} on each $\Phi^{*-1}(w_{K_i})$ in order to write each factor in terms of free cumulants. Furthermore, given $B\subseteq[n]$, observe that the fact that $K_i$ and $K_{i+1}$ are not consecutive intervals, for every $1\leq i<s$, implies that the conditions $\pi\in \mathsf{NCInt}([n]\backslash B)$ and $\{B\}\cup \pi\in\NC(n)$ mean that $\pi = \pi_1\sqcup \pi_2\sqcup\cdots\sqcup \pi_s$, where $\pi_i \in \mathsf{NCInt}(K_i)$ for each $1\leq i\leq s$. Therefore, we get
    \begin{eqnarray*}
        W(w) &=&  \sum_{B\subseteq[n]} w_B\prod_{i=1}^s \Phi^{*-1}(w_{K_i})
        \\ &=&  \sum_{B\subseteq[n]} w_B\prod_{i=1}^s \sum_{\pi_i\in\mathsf{NCInt}(K_i)} (-1)^{|\pi_i|} k_{\pi_i}(a_1,\ldots,a_n|K_i)
        \\ &=& \sum_{B\subseteq[n]} w_B \sum_{\substack{\pi\in\mathsf{NCInt}([n]\backslash B)\\ \{B\}\cup \pi\in\NC(n)}} (-1)^{|\pi|} k_{\pi}\big(a_1,\ldots,a_n\,\big|\,[n]\backslash V\big),
    \end{eqnarray*}
    as we wanted to show.
\end{proof}

\section{The antipode of $\mathsf{S}(\mathsf{T}_+(V))$ and forest formulas}
\label{sec:antipodeSS}

As a preliminary step towards finding the antipode in a double tensor Hopf algebra $\tT(\tT_+(V))$, we begin by addressing the same issue in its commutative counterpart, the symmetric algebra of the augmented tensor algebra, $\sS(\tT_+(V))$. The goal of this section is to understand the antipode formula using a general method that was previously developed in \cite{Menous2018}. This method involves calculating a formula for the iterated coproduct using a specific class of decorated non-planar rooted trees, which naturally incorporates Schröder trees into the antipode formula.

\subsection{Right-handed polynomial Hopf algebras and forest formulas} In the work \cite{Menous2018}, the authors introduce a method to produce combinatorial tree-indexed formulas for the iterated coproduct and the antipode for a particular class of Hopf algebras that are enveloping algebras of pre-Lie algebras (see \cite[Chap. 6]{CaP22}). The precise definition of this class of Hopf algebras can be stated as follows.

\begin{definition}
    Let $V$ be a vector space. A \textit{right-handed polynomial  Hopf algebra} is a polynomial algebra $\sS(V)$ together with a coproduct $\delta$ that makes $\sS(V)$ a Hopf algebra and such that $\delta$ satisfies
\begin{equation}
    \overline{\delta}(V) \subset V \otimes \sS(V),
\end{equation}
where $\overline{\delta}$ is the reduced coproduct $\overline{\delta}(x) = \delta(x) - x \otimes 1_{\sS(V)} - 1_{\sS(V)}\otimes x$.
\end{definition}

\begin{example}
\label{ex:decoration1}
Let $V$ be a finite-dimensional vector space, and consider $\sS(\tT_+(V))$ the symmetric algebra of the tensor algebra. Notice that $\tT_+(V)$ has a countable basis given by the set $\mathcal{W}$ of non-empty words with letters in $V$. %For notational convenience, we choose an enumeration $\mathcal{W} = \{w_i\}_{i\geq1}$. 
By symmetrizing the expression for the coproduct in $\tT(\tT_+(V))$, the reduced coproduct in $\sS(\tT_+(V))$ on a word $w\in\mathcal{W}$ of length $|w|=n$, denoted by $\overline{\Delta}_\sS$, is given by
\begin{equation}
    \label{eq:STVCop}
    \overline{\Delta}_\sS(w) = \sum_{\emptyset\neq A \subsetneq [n]} w_A \otimes \tilde{w}^{(A)},
\end{equation}
where if $K(A) = \{K_1,\ldots,K_s\}$, then $\tilde{w}^{(A)}$ denotes the commutative monomial in $\sS(\tT_+(V))$ given by the polynomial product $w_{K_1}\cdot \ldots \cdot w_{K_s}$. In particular, we have that $\sS(\tT_+(V))$ is a right-handed polynomial Hopf algebra. 
\end{example}

%As an example, we can see that $S(T_+(V)$ together with $\Delta$ is a right-handed polynomial algebra. 
\par We now describe the method developed in \cite{Menous2018} to find an antipode formula for right-handed polynomial Hopf algebras. First, assume that $\sS(V)$ is a conilpotent right-handed polynomial Hopf algebra and that that $V$ has a countable basis $\mathcal{B} = \{b_i\}_{i\geq1}$. Then, the reduced coproduct of an element $b_i\in \mathcal{B}$ can be expanded as
\begin{equation}
    \label{eq:generalforest0}
    \overline\delta (b_i)=\sum_{i_0, I\not=\emptyset} \lambda^{i ; i_0}_{I} b_{i_0}\otimes b_{I},
\end{equation}
for coefficients $\lambda^{i;i_0}_I\in \mathbb{K}$, where the above sum runs over the integers $i_0\geq1$ and non-empty multisets $I\subseteq \mathbb{N}$, where $b_I := b_{i_1}\cdot  b_{i_2}\cdot\ldots\cdot b_{i_s}$ is the monomial given by the (commutative) polynomial product of the basis elements indexed by $I=\{i_1,i_2,\ldots,i_s\}$. Notice that the $\lambda^{i;i_0}_I$-coefficients completely determine the coproduct, its action on products of elements of $\mathcal{B}$, as well as the action of the iterated reduced coproducts $\overline{\delta}^{[k]}$. 

\

 A key observation in \cite{Menous2018} is that the summation in \eqref{eq:generalforest0} can be indexed by non-planar decorated corollas in the following way:

\begin{equation}
\label{eq:generalforest}
\overline\delta (b_i)=\sum \lambda\Big(\corollaa \Big) b_{i_0}\otimes b_{i_1}\cdots b_{i_k}.
\end{equation}
The decoration of the corollas is given by a pair of positive integers $(i;i_0)$ associated to the root, and leaves are decorated by positive integers $i_1,\ldots,i_k$, with $k\geq1$. The commutativity of the elements in $\sS(V)$ is reflected by the fact that we are considering non-planar trees. Furthermore, the expression \eqref{eq:generalforest} suggests that iterations of the reduced coproduct can be encoded in a family of more general non-planar decorated trees. To this end, we describe the following notions associated to decorated trees.

\begin{definition}[\cite{Menous2018}]\label{def:tree}
Let $T$ be a non-planar finite rooted tree whose internal vertices are decorated by pairs $(p_1;p_2)$ of positive integers, and whose leaves are decorated by positive integers. In the case of $T$ being a  single-vertex tree, the vertex is considered as a leaf.
\begin{enumerate}[i)]
    \item  For any $x\in \operatorname{Int}(T)$ internal vertex of $T$, we denote $d(x)=(d_1(x);d_2(x))$ its decoration. If $x$ is a leaf of $T$, we denote its decoration $d(x)=d_1(x)=d_2(x)$. 
    \item If the root of $T$ is decorated by $i$ or $(i;i_0)$, we say that $T$ is \textit{associated to $b_i\in \mathcal{B}$}. The set of decorated trees associated to $b_i$ is denoted by $\mathcal{T}_{i}$.
    \item For a given pair of positive integers $(i;i_0)$, we denote by $B_+^{(i;i_0)}(T_1,\ldots, T_s)$ the decorated tree obtained by adding a common root decorated by $(i;i_0)$ to the decorated trees $T_1,\ldots, T_s$. {If $T = B_+^{(i;i_0)}(T_1,\ldots, T_s)$, we denote by $B_-(T)$ the multiset of trees $\{T_1,\ldots, T_s$\}.}
    \item Let $F$ be a multiset of decorated trees given by
    \begin{equation}
        \label{eq:multiforest}
        F=\{T_{1,i_1}^{k_{1,1}},\dots,T_{s_1,i_1}^{k_{s_1,1}}\}\cup \dots \cup \{T_{1,i_p}^{k_{1,p}},\dots,T_{s_p,i_p}^{k_{s_p,p}}\},
    \end{equation}
    where: \begin{itemize}
        \item the tree $T_{j,i_q}$ is associated to $i_q$, for every $1 \leq q \leq p$; 
        \item the trees $T_{j,i_q}$ are all distinct, for every $1\leq j\leq s_q$;
        \item the notation $T_{j,i_q}^{k_{j,q}}$ means that the tree $T_{j,i_q}$ appears with multiplicity $k_{j,q}$ in the multiset $F$, for every $1 \leq j \leq s_q$ and $1 \leq q \leq p$.
    \end{itemize}  
    \vspace{.1in}
    The \textit{symmetry coefficient} of $F$, denoted by $\operatorname{sym}(F)$, is then given by $$\operatorname{sym}(F):=\prod_{j=1}^p {{k_{1,j}+\cdots+k_{s_j,j}}\choose{k_{1,j},\dots,k_{s_j,j}}}.$$
    \item We define the coefficient $\lambda(T)$ as follows: if $\bullet_i$ stands for the single-vertex tree with decoration $i$, then $\lambda(\bullet_i):=1$. In general, if $T=B_+^{(i;i_0)} (T_1,\ldots, T_s)$, then 
\begin{equation}
    \label{def:lambdaT}
    \lambda(T):=\lambda^{i;i_0}_{i_1,\dots,i_s} \operatorname{sym}(F)\lambda(T_1)\cdots\lambda(T_s),
\end{equation}
when $T_1,\dots, T_s$ are trees respectively associated to $b_{i_1},\ldots,b_{i_s}$, and $F$ is the multiset $\{T_1,\ldots,T_s\}$. %In other words, if for $x\in V(T)$, $T^x$ stands for the subtree of $T$ consisting of $x$ and all its descendants, then
\begin{comment}
\begin{equation}
    \lambda(T) = \prod_{x\in \operatorname{Int}(T)} \lambda^{d(x)}_{d_1(\operatorname{succ}(x))} \operatorname{sym}(B^-(T^x)),
\end{equation}
where $d_1(\operatorname{succ}(x)) = \{d_1(y)\,:\, y\in \operatorname{succ}(x) \}$.
\end{comment}

\end{enumerate}
\end{definition}

It was observed in \cite{Menous2018} that %since the decorations of trees associated to $b_i$ run over all the indexes of basis elements appearing in the various iterated reduced coproducts of $b_i$, 
a way to describe all the tensors of length $k$, that can be obtained in the $k$-fold iterated reduced coproduct, can be obtained by using the notion of $k$-linearization of a poset. 

\begin{definition}
    Let $P$ be a finite poset. We say that $f:P\to [k]$ is a  \textit{$k$-linearization of $P$} if $f$ is a surjective, strictly order-preserving map $f:P\to[k]$. We denote by  $k\operatorname{-lin}( P)$, the set of $k$-linearizations of $P$.
\end{definition}

Recall that any tree can be regarded as a poset where the root is a minimal element. In particular, if $T$ is a decorated tree with a given decoration $d=(d_1;d_2)$ and $f$ is a $k$-linearization of $T$, we can write
$$C(f):=\left(\prod_{x_1\in f^{-1}(1)} b_{d_2(x_1)}\right) \otimes \cdots \otimes \left(\prod_{x_k\in f^{-1}(k)} b_{d_2(x_k)}\right),$$
where we recall that the product in the above expression is the commutative polynomial product.

\begin{theorem}[{\cite[Lem. 12]{Menous2018}, \cite{celestino2022forest}}]
\label{thm:forestform0}
Let $\sS(V)$ be a right-handed polynomial Hopf algebra with coproduct $\delta$, such that $V$ has a countable basis $\mathcal{B} = \{b_i\}_{i\geq1}$. Then, for any $b_i \in \mathcal{B}$, we have for the action of the $k$-fold iterated reduced coproduct:
\begin{equation}
\label{eq:forestformula0}
\overline\delta^{[k]}(b_i)=\sum\limits_{T\in \mathcal{T}_i}\sum_{f\in k\operatorname{-lin} (T)}\lambda(T)C(f).
\end{equation}
\end{theorem}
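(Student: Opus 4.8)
The plan is to argue by induction on $k$, peeling off one copy of the reduced coproduct at each stage and using its coassociativity to write $\overline\delta^{[k+1]}=(\id^{\otimes(k-1)}\otimes\overline\delta)\circ\overline\delta^{[k]}$, i.e.\ applying $\overline\delta$ to the last tensor slot, in agreement with the iteration convention fixed earlier. For the base case $k=1$ one has $\overline\delta^{[1]}=\id$, and a $1$-linearization $f\colon T\to[1]$ can exist only when $T$ carries no strict order relation, that is, when $T=\bullet_i$ is the single-vertex tree associated to $b_i$; since $\lambda(\bullet_i)=1$ and $C(f)=b_i$, the right-hand side of \eqref{eq:forestformula0} collapses to $b_i$, as required. (For $k=2$ the right-hand side is a sum over decorated corollas, and as the symmetry coefficient of a corolla is trivial, one recovers precisely \eqref{eq:generalforest0}, i.e.\ the defining expansion \eqref{eq:generalforest} of $\overline\delta$.)

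For the inductive step I would apply the induction hypothesis to the inner factor $\overline\delta^{[k]}(b_i)=\sum_{T,f}\lambda(T)C(f)$ and then expand the outer $\overline\delta$ on the last slot. The key structural observation is that in any $k$-linearization $f$ the top level $f^{-1}(k)$ consists only of leaves: an internal vertex has descendants, which must be assigned strictly larger labels, so it cannot sit at the maximal level $k$. Hence the last slot of $C(f)$ is a commutative monomial $\prod_{x\in f^{-1}(k)}b_{d_2(x)}$ in which every factor satisfies $d_2(x)=d_1(x)$ (a leaf decoration), so that splitting it by $\overline\delta(b_a)=\sum\lambda^{a;a_0}_{J}\,b_{a_0}\otimes b_J$ is legitimate. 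Extending $\overline\delta$ multiplicatively over this monomial, each leaf-generator $b_a$ either stays in slot $k$, passes untouched into the new slot $k+1$, or is split as above; in tree terms these three moves correspond to keeping the leaf at level $k$, reassigning it to level $k+1$, or replacing the leaf by a decorated corolla $B_+^{(a;a_0)}(T_1,\ldots,T_s)$ with $T_1,\dots,T_s=\bullet_{c}$, $c\in J$, whose new children populate level $k+1$. Running over all choices produces exactly the pairs $(T',f')$ with $T'\in\mathcal{T}_i$ and $f'\in(k+1)\operatorname{-lin}(T')$, while the nonemptiness of slots $k$ and $k+1$ matches the surjectivity of $f'$ onto $[k+1]$. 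The recursive definition \eqref{def:lambdaT} of the coefficient then guarantees that each local factor $\lambda^{a;a_0}_{J}$ introduced by a split multiplies onto $\lambda(T)$ to rebuild $\lambda(T')$.

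The genuinely delicate point — the step I expect to be the main obstacle — is the bookkeeping of the symmetry coefficients $\operatorname{sym}(F)$. Because $\sS(V)$ is commutative the trees are non-planar, so two distinct labelled ways of splitting equal leaves into equal corollas collapse to the same $(T',f')$, and, dually, the multiplicative extension of $\delta$ over a monomial with repeated generators generates binomial and multinomial multiplicities. I would therefore organize the inductive sum not over sequences of subtrees but over multisets of (subtree, linearization) data, as in \eqref{eq:multiforest}, and verify that the multinomial
\[
\operatorname{sym}(F)=\prod_{j=1}^p\binom{k_{1,j}+\cdots+k_{s_j,j}}{k_{1,j},\dots,k_{s_j,j}}
\]
counts exactly the labelled splittings collapsing to a given non-planar $T'$ carrying the linearization $f'$. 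Showing that these two counts coincide with no residual factor is the crux of the argument: it is precisely the commutative (grafting) combinatorics that the symmetry coefficient was designed to encode, and a careful comparison of multiplicities — grouping the newly created sibling subtrees by isomorphism type and matching the multinomials on both sides — completes the induction.
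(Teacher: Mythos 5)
You should first be aware that the paper does not prove \cref{thm:forestform0} at all: the statement is imported from \cite[Lem.~12]{Menous2018} (see also \cite{celestino2022forest}), so there is no in-paper proof to compare against. The closest in-paper argument is the proof of the non-commutative analogue, \cref{thm:iteratedDelta} together with \cref{lem:CopProduct}, which the authors describe as an adaptation of the Menous--Patras proof and which follows exactly the scheme you propose: induction on $k$, expansion of the reduced coproduct on the last tensor slot, and a correspondence between ``moves applied to the level-$k$ data'' and $(k+1)$-linearizations of enlarged trees. Your base cases are right (only $\bullet_i$ admits a $1$-linearization; the symmetry coefficient of a corolla of single-vertex trees is $1$), and your observation that $f^{-1}(k)$ consists of leaves, so that the last slot is a monomial in leaf decorations with $d_2=d_1$, is precisely what makes the last-slot expansion legitimate.

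Two steps remain genuinely unfinished. First, the asserted identification of the outcomes of your three moves with all pairs $(T',f')$, $f'\in(k+1)\operatorname{-lin}(T')$, needs the inverse map to be pinned down: in $(T',f')$ a leaf at level $k+1$ was \emph{newly created by a split} exactly when its parent sits at level $k$, and was a \emph{moved} leaf exactly when its parent sits at level $<k$ (a moved leaf's parent already had level $<k$ under $f$, since $f$ is strictly order-preserving). Without this dichotomy the naive reverse construction ``delete the top level'' fails, because it would also delete moved leaves hanging from low-level vertices, which the forward procedure could never reattach. Second, and more seriously, the coefficient matching is not merely the local statement that each $\lambda^{a;a_0}_{J}$ multiplies onto $\lambda(T)$: splitting a leaf $y$ changes the isomorphism type of the subtree below every ancestor of $y$, hence can change the symmetry coefficients $\operatorname{sym}$ entering \eqref{def:lambdaT} at all of those ancestors (two equal sibling leaves contribute $\operatorname{sym}=1$, but after splitting one of them the siblings become distinct and contribute $\operatorname{sym}=2$, which must match the factor $2$ produced by expanding $\overline\delta$ on the square of a basis element). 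You correctly flag this multiplicity comparison as the crux, but the proposal defers it; until that count is carried out --- vertex by vertex, or by an orbit-counting argument for automorphisms of decorated non-planar trees --- the induction is not closed.
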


The formula in \eqref{eq:forestformula0} is called the \textit{forest formula for iterated coproducts}. Recalling \textit{Takeuchi's formula} \cite{takeuchi1971free} for the antipode $S_H$ in a Hopf algebra $H$ with product $m_H$ and reduced coproduct $\overline{\Delta}_H$:
\begin{equation}
\label{eq:takeuchi}
    S_H(b) = \sum_{k\geq1} (-1)^k m_H^{[k]}\circ \overline{\Delta}_H^{[k]}(b),\quad\mbox{ for all $b\in H$},
\end{equation}
we have the following cancellation-free forest formula for the antipode of $\sS(V)$.

\begin{theorem}[{\cite[Thm. 8]{Menous2018}}]
\label{thm:forestAntipode}
With the notation of the \cref{thm:forestform0}, the evaluation of the antipode $S_V$ of the right-handed polynomial Hopf algebra $\sS(V)$ on an element $b_i\in \mathcal{B}$ is given by the cancellation-free formula:
\begin{equation}
    \label{eq:forestAntipode}
S_V(b_i)=\sum\limits_{T\in \mathcal{T}_i}(-1)^{|T|}\lambda(T)\tau(T),
\end{equation}
where $\tau(T)= \prod\limits_{x\in \operatorname{Vert}(T)} b_{d_2(x)},$ for any $T\in \mathcal{T}_i$.
\end{theorem}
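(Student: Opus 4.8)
The plan is to combine Takeuchi's formula \eqref{eq:takeuchi} with the forest formula for iterated coproducts from \cref{thm:forestform0}, and then to isolate a purely order-theoretic identity that accounts for all the cancellations. First I would apply Takeuchi's formula to $b_i$, writing $S_V(b_i)=\sum_{k\geq1}(-1)^k\, m^{[k]}\circ\overline{\delta}^{[k]}(b_i)$, where $m$ is the (commutative) product of $\sS(V)$; this sum is finite since $\sS(V)$ is conilpotent, so $\overline{\delta}^{[k]}(b_i)$ vanishes for $k$ large. Substituting \eqref{eq:forestformula0} yields
\[
S_V(b_i)=\sum_{k\geq1}(-1)^k\sum_{T\in\mathcal{T}_i}\ \sum_{f\in k\operatorname{-lin}(T)}\lambda(T)\, m^{[k]}\bigl(C(f)\bigr).
\]

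The next step is to observe that the multiplication destroys all dependence on $f$. Indeed $C(f)=\bigotimes_{j=1}^k\bigl(\prod_{x\in f^{-1}(j)}b_{d_2(x)}\bigr)$, and since $f$ is surjective its fibres partition $\operatorname{Vert}(T)$; because $\sS(V)$ is commutative, applying $m^{[k]}$ collapses all tensor factors into the single monomial $\prod_{x\in\operatorname{Vert}(T)}b_{d_2(x)}=\tau(T)$, independently of both $f$ and $k$. Interchanging the order of summation, I would factor out $\lambda(T)\tau(T)$ to obtain
\[
S_V(b_i)=\sum_{T\in\mathcal{T}_i}\lambda(T)\,\tau(T)\Bigl(\sum_{k\geq1}(-1)^k\,\bigl|k\operatorname{-lin}(T)\bigr|\Bigr),
\]
where the outer sum is finite by the grading of $\sS(\tT_+(V))$.

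This reduces the theorem to the combinatorial heart of the matter, which is where I expect the real work to be: for the finite poset $P=\operatorname{Vert}(T)$ one must show $\sum_{k\geq1}(-1)^k\bigl|k\operatorname{-lin}(P)\bigr|=(-1)^{|P|}=(-1)^{|T|}$. My preferred route is through the strict order polynomial $\overline{\Omega}_P(t)$, counting strict order-preserving maps $P\to[t]$. Classifying such a map by the size $k$ of its image (a $k$-element subset of $[t]$, inheriting the chain order, onto which the map is a $k$-linearization) gives $\overline{\Omega}_P(t)=\sum_{k}\bigl|k\operatorname{-lin}(P)\bigr|\binom{t}{k}$. Since $\binom{-1}{k}=(-1)^k$, evaluating at $t=-1$ produces $\sum_k(-1)^k\bigl|k\operatorname{-lin}(P)\bigr|=\overline{\Omega}_P(-1)$. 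Stanley's reciprocity theorem for order polynomials, $\overline{\Omega}_P(-t)=(-1)^{|P|}\Omega_P(t)$, evaluated at $t=1$ with $\Omega_P(1)=1$ (the unique constant map), then gives $\overline{\Omega}_P(-1)=(-1)^{|P|}$.

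Substituting this identity back into the displayed expression yields exactly $S_V(b_i)=\sum_{T\in\mathcal{T}_i}(-1)^{|T|}\lambda(T)\tau(T)$, as claimed; the formula is manifestly cancellation-free because each coefficient $(-1)^{|T|}\lambda(T)$ comes with a fixed sign per tree. The hard part is genuinely the identity $\sum_k(-1)^k|k\operatorname{-lin}(P)|=(-1)^{|P|}$: alternatively one could prove it directly by a sign-reversing involution on surjective strict maps, or by induction peeling off a maximal element of $P$, thereby avoiding any appeal to reciprocity. In either variant the obstacle is the same, namely keeping careful track of surjections onto chains of all possible lengths.
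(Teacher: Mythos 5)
Your argument is correct. The paper states this theorem only as a quotation of \cite[Thm.~8]{Menous2018} and supplies no proof of its own, but your derivation — Takeuchi's formula \eqref{eq:takeuchi} combined with the forest formula \eqref{eq:forestformula0}, the observation that $m^{[k]}\bigl(C(f)\bigr)=\tau(T)$ destroys all dependence on $f$ and $k$ by commutativity, and the reduction to the poset identity $\sum_{k\geq1}(-1)^k\,\bigl|k\operatorname{-lin}(P)\bigr|=(-1)^{|P|}$ settled via strict order-polynomial reciprocity — is precisely the standard route taken in the cited source, and each step (finiteness of the sums by conilpotence, the classification of strict maps by image size, $\Omega_P(1)=1$) checks out.
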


\subsection{Application to the case of $\sS(\tT_+(V))$} We are now interested in applying the above machinery for the case of the symmetric algebra of the augmented tensor algebra of a finite-dimensional vector space. From \cref{ex:decoration1}, we have that $\sS(\tT_+(V))$ is a right-handed polynomial Hopf algebra such that the set of non-empty words $\mathcal{W}$ on letters in $V$ is a countable basis of $\tT_+(V)$. For notational convenience, we give an enumeration $\mathcal{W} = \{w_i\}_{i\geq1}$. 

\begin{remark}
    The formula for the antipode in $\sS(\tT_+(V))$, in the general case in which $V$ is not finite-dimensional, will also follow from the finite-dimensional case. This can be noticed by the fact that, for a word $w=a_1\cdots a_n\in V^{\otimes n}$, the coproduct $\Delta(w)$ is a linear combination of elements of the form $w'\otimes \mathfrak{w}$, where $w'$ is a subword of $w$ and $\mathfrak{w}$ is a product of subwords of $w$. Therefore, we can obtain $S(w)$ by computing the formula for the antipode in $\sS(\tT_+(V_w))$, where $V_w$ is the finite-dimensional vector space generated by $\{a_1,\ldots,a_n\}$.
\end{remark}

We begin by writing the expression for $\overline{\Delta}_\sS$ in \eqref{eq:STVCop} as a sum indexed by decorated corollas as in \eqref{eq:generalforest}. To this end, take $A\subsetneq[n]$ a non-empty subset, i.e.~$K(A)=\{K_1,\ldots,K_s\}$ is non-empty and does not contain $[n]$. Now, consider a corolla $C_0$ with $|A|+1$ leaves, and for any $K_i\in K(A)$, consider a corolla $C_i$ with $|K_i|+1$ leaves. Now we label the sectors of $C_A$ with the elements of $A$ increasingly from left to right, i.e.~by composing the natural labelling of the sectors of $C_0$ with the unique increasing bijection from $A$ to $|A|$. Moreover, we decorate $C_i$ with the elements of $K_i$ in the same way. Notice that the definition of $K(A)$ implies that its elements are non-consecutive intervals. This implies that we can graft $C_1,\ldots,C_s$ to different leaves of $C_0$ in order to obtain a Schröder tree $t_A\in \ST(n)$, in such a way that the labellings of the sectors of the corollas coincide with the natural labelling of $t_A$.

\

Alternatively, given $\emptyset\neq A\subsetneq[n]$, we can describe $t_A\in \ST(n)$ as follows: consider the non-crossing partition $\sigma_A = \{A\}\cup K(A)\in \NC(n)$. Then $t_A$ is the unique Schröder tree with $n+1$ leaves such that $\pi(t_A) = \sigma_A$ and the block $A\in \sigma_A$ is determined by the labels of the sectors adjacent to the root of $t_A$, given by the natural labelling of $t_A$. For instance, for $n=10$ and $A = \{ 3,4,7,9\}$, then we have

\begin{figure}[H]
\centering
\begin{subfigure}{.4\textwidth}
  \centering
 $\sigma_A =$  \begin{tikzpicture}[baseline={([yshift=0.3ex]current bounding box.center)},thick,font=\small]
     \path (0,0) 		node (a) {1}
           	(0.5,0) 	node (b) {2}
           	(1,0) 		node (c) {3}
           	(1.5,0) 	node (d) {4}
           	(2,0) 		node (e) {5}
           	(2.5,0) 	node (f)  {6}
           	(3,0) 		node (g) {7}
           	(3.5,0) 	node (h) {8}
           	(4,0) 		node (i) {9}
           	(4.5,0) 	node (j)  {10};
     \draw (a) -- +(0,0.75) -| (b);
     \draw (c) -- +(0,0.75) -| (i);
     \draw (d) -- +(0,0.75) -| (d);
     \draw (e) -- +(0,0.60) -| (f);
     \draw (g) -- +(0,0.75) -| (g);
     \draw (h) -- +(0,0.60) -| (h);
     \draw (j) -- +(0,0.75) -| (j);
   \end{tikzpicture}
  %\caption{A subfigure}
  %\label{fig:sub2}
\end{subfigure},$\qquad$ $t_A =$ 
\begin{subfigure}{.4\textwidth}
  \centering
  \begin{tikzpicture}[scale=1.5,font=\footnotesize,
 level 1/.style={level distance=5mm,sibling distance=8mm},
 level 2/.style={level distance=5mm,sibling distance=3.5mm},
 level 3/.style={level distance=6mm,sibling distance=5mm},
 level 4/.style={level distance=10mm,sibling distance=5mm},
]
% The Tree
\node(0)[solid node,label=above:{}]{} 
child{node(1)[solid node]{}
    child{[black] node(11)[hollow node]{}}
    child{[black] node(12)[hollow node]{}}
    child{[black] node(13)[hollow node]{}}
}    
child{node(2)[hollow node]{}
}
%edge from parent node[left]{}
child{node(3)[solid node]{}
    child{[black] node(31)[hollow node]{}}
    child{[black] node(32)[hollow node]{}}
    child{[black] node(33)[hollow node]{}}
}
child{node(4)[solid node]{}
child{[black] node(41)[hollow node, ]{}}
child{[black] node(42)[hollow node,]{}
}
edge from parent node[right]{}
}
child{node(5)[solid node]{}
    child{[black] node(51)[hollow node]{}}
    child{[black] node(52)[hollow node]{}}
};
% information set
%\draw[dashed,bend right](11)to(12);
%\draw[dashed,bend right](41)to(42);

\path (1) -- node {$3$} (2);
\path (2) -- node {$4$} (3);
\path (3) -- node {$7$} (4);
\path (4) -- node {$9$} (5);

\path (11) -- node (H) {${1}$} (12);
\path (12) -- node {$2$} (13);
\path (31) -- node {$5$} (32);
\path (32) -- node {$6$} (33);
\path (41) -- node {$8$} (42);
\path (51) -- node {$10$} (52);

\end{tikzpicture}
  %\caption{A subfigure}
  %\label{fig:sub1}
\end{subfigure}%

\label{fig:DecSchTreeEx} %{\color{red}  ponemos la particion asociada de una vez?}}
\end{figure}
\noindent where it is clear that $\pi(t_A) = \sigma_A$. On the other hand, notice that the planarity of Schröder trees describes the order between the elements of $K(A)$. In other words, given a Schröder tree $t$ of \textit{height} 2, i.e.~$t$ is such that the largest path from the root to any leaf contains exactly two internal vertices, we can recover a non-empty subset $A\subsetneq [n]$ indexing \eqref{eq:STVCop}. Hence, we can state:

\begin{lemma}
\label{lem:decoration}
    Let $w\in \mathcal{W} \subset \tT_+(V)$ be a word of length $|w| = n$. Then, the reduced coproduct on $\sS(\tT(V))$ can be written as
    
\begin{equation}
    \label{ex:decoration}
    \overline{\Delta}_\sS(w) = \sum_{\substack{t\in \ST(n)\\t\mbox{ \scriptsize has height }2}} w_A\otimes \tilde{w}^{(A)},
\end{equation}
where $A$ is the block of $\pi(t)$ associated to the root of $t$, and $w^{(A)} = w_{K_1}\cdot\ldots\cdot w_{K_s}$, where $K(A) = \{K_1,\ldots,K_s\}$ is the list of blocks associated to the internal vertices different than the root.
\end{lemma}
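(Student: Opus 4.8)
The plan is to exhibit an explicit bijection between the index set of the coproduct formula \eqref{eq:STVCop}, namely the non-empty proper subsets $A \subsetneq [n]$, and the set of height-$2$ Schröder trees in $\ST(n)$, and then to check that corresponding summands agree termwise. The forward map is the assignment $A \mapsto t_A$ constructed just before the statement: from $A$ one forms the non-crossing partition $\sigma_A = \{A\}\cup K(A)$ and lets $t_A$ be the associated Schröder tree whose root block is $A$. Since $A$ is non-empty and proper, the complement $[n]\setminus A$ is non-empty, so $K(A)\neq\emptyset$ and $t_A$ has at least one non-root internal vertex; as each such second-level vertex has only leaves as children (it is one of the grafted corollas $C_i$), the tree $t_A$ has height exactly $2$. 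For the inverse I would send a height-$2$ tree $t$ to the block $A_t$ of $\pi(t)$ (\cref{def:pit}) adjacent to its root; this $A_t$ is non-empty because the root has at least two children and hence at least one adjacent sector, and it is proper because height $2$ forces a non-root internal vertex whose adjacent sectors lie outside $A_t$.

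The crux of the argument, and the step I expect to require the most care, is to show that for a height-$2$ tree $t$ the blocks $K_1,\ldots,K_s$ of $\pi(t)$ attached to the non-root internal vertices are exactly the connected components $K(A_t)$ of $[n]\setminus A_t$. Two observations accomplish this. First, each $K_i$ is an interval: if $c_i$ is the corresponding second-level internal vertex, then every sector of the subtree rooted at $c_i$ is adjacent to $c_i$, and by the inductive definition of the total order on $\operatorname{Sect}(t)$ the sectors contained in a single subtree form a consecutive segment of that order, so under the natural labelling $\iota_t$ their labels are consecutive integers. Second, these intervals are maximal: between the subtrees of any two consecutive children of the root there is a sector adjacent to the root, whose label lies in $A_t$, so distinct $K_i$ are always separated by an element of $A_t$ and cannot be merged into a single interval. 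Hence $\{K_1,\ldots,K_s\} = K(A_t)$. Since a height-$2$ Schröder tree is completely determined by its root block together with the grouping of the complement into the intervals attached to its second-level vertices, this forces $t = t_{A_t}$; conversely $A_{t_A}=A$ is immediate from the construction of $t_A$, so the two maps are mutually inverse.

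Finally, I would match the summands under this bijection. For $t = t_A$ the block of $\pi(t_A)$ adjacent to the root is $A$, and the blocks attached to the remaining internal vertices are precisely $K(A)=\{K_1,\ldots,K_s\}$; therefore the summand indexed by $t_A$ on the right-hand side of \eqref{ex:decoration} is $w_A\otimes \tilde{w}^{(A)}$ with $\tilde{w}^{(A)} = w_{K_1}\cdots w_{K_s}$, which coincides with the summand indexed by $A$ in \eqref{eq:STVCop}. Reindexing the sum in \eqref{eq:STVCop} over height-$2$ trees via $A \leftrightarrow t_A$ then yields the asserted identity.
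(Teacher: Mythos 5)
Your proposal is correct and follows essentially the same route as the paper, which establishes this lemma via the informal discussion preceding its statement: the construction $A\mapsto t_A$ by grafting corollas labelled by $A$ and the components of $K(A)$, together with the observation that a height-$2$ tree determines $A$ as the root block of $\pi(t)$. You merely make explicit the verifications (height exactly $2$, that the second-level blocks are intervals and maximal, injectivity in both directions) that the paper leaves implicit.
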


Even more, the term in the above sum can be equivalently indexed by a decorated version of the \textit{non-planar skeleton of $t$}, denoted by $\widetilde{\sk}(t)$, which is defined to be the non-planar rooted tree generated by the $\operatorname{Int}(t)$. Notice that for the case of Schröder trees of height 2, its skeleton is precisely a corolla with exactly $i(t)-1$ leaves. In addition, the decoration of $\widetilde{\sk}(t)$ is given by labelling the root with $(i;i_0)$, where $i$ and $i_0$ are the indexes in the list $\mathcal{W}=\{w_j\}_{j\geq1}$ such that $w=w_i$ and $w_{i_0}=w_A$, respectively, where $A$ is the block of $\pi(t)$ associated to the root of $t$, and the leaves are decorated with $i_1,\ldots,i_s$, where for each $1\leq j\leq s$, $i_j$ is the index such that $w_{i_j} = w_{K_j}$, with $K(A)=\{K_1,\ldots,K_s\}$. Thus, we have shown that \eqref{ex:decoration} can be written in the form \eqref{eq:generalforest}, where the respective $\lambda^{i;i_0}_{i_1,\ldots,i_s}$-coefficients are precisely described below.

%Next, we want to describe the $\lambda^{i;i_0}_{i_1,\ldots,i_s}$-coefficients in \eqref{eq:generalforest0} for the case of the $\sS(\tT_+(V)).$ For the previous discussion, Schröder trees $t_A\in \ST(n)$ index the terms in \eqref{eq:STVCop}, and the blocks of the non-crossing partition determined of such Schröder trees determine the subwords appearing on each term of \eqref{eq:STVCop}. This information can be encoded in a decorated corolla as in \eqref{eq:generalforest}, by taking the decorated non-planar tree given by the skeleton of $t_A$. The skeleton is precisely a corolla with $s=|K(A,[n])|$ leaves, and the decoration is given by labelling the root with $(i;i_0)$, where $i$ and $i_0$ are the indexes such that $w=w_i$ and $w_{i_0}=w_A$, respectively, and the leaves are decorated with $i_1,\ldots,i_s$, where for each $1\leq j\leq s$, $i_j$ is the index such that $w_{i_j} = w_{K_j}$, with $K(A,[n])=\{K_1,\ldots,K_s\}$. This is precisely stated in the following lemma.

\begin{lemma}
\label{lem:auxS7}
Let $\mathcal{W}=\{w_j\}_{j\geq1}$ be the set of non-empty words on a finite-dimensional vector space $V$. For any indexes $i,i_0,i_1,\ldots,i_s\geq1$, the coefficient $\lambda^{i;i_0}_{i_1,\ldots,i_s}$ is the number of ways in which it is possible to select interval subwords $v_1,v_2,\ldots,v_{s+1}$ of $w_i$ such that: 
\begin{itemize}
    \item $v_j$ is an interval subword, for any $1\leq j\leq s+1$, i.e.~$v_j = w_{J_j}$ for some interval $J_j \subseteq [|w_i|]$;  
    \item $v_j$ is a non-empty word, for $2\leq j\leq s$; 
    \item $w_{i_0} = v_1v_2\cdots v_{s+1}$, i.e.~the concatenation of $v_1,\ldots,v_{s+1}$ is $w_{i_0}$; 
    \item there exists a permutation $\sigma\in \Sigma_s$ such that $w_i = v_1w_{i_{\sigma(1)}}v_2 w_{i_{\sigma(2)}}\cdots v_sw_{i_{\sigma(s)}}v_{s+1}$.
\end{itemize}
\end{lemma}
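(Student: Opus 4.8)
The plan is to read off $\lambda^{i;i_0}_{i_1,\ldots,i_s}$ directly from \eqref{eq:STVCop} and then set up a bijection between the subsets $A$ contributing to it and the selections of interval subwords described in the statement. Writing $w_i = u_1\cdots u_n$ with $n = |w_i|$, formula \eqref{eq:STVCop} reads
\begin{equation*}
\overline{\Delta}_\sS(w_i) = \sum_{\emptyset \neq A \subsetneq [n]} w_A \otimes \big(w_{K_1}\cdots w_{K_s}\big),
\end{equation*}
where $K(A) = \{K_1,\ldots,K_s\}$ lists the connected components of $[n]\setminus A$ in increasing order. Comparing with \eqref{eq:generalforest0}, the coefficient $\lambda^{i;i_0}_{i_1,\ldots,i_s}$ is exactly the number of subsets $A$ for which $w_A = w_{i_0}$ and $\{w_{K_1},\ldots,w_{K_s}\}$ equals, as a multiset, $\{w_{i_1},\ldots,w_{i_s}\}$. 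This count depends only on the multiset $\{i_1,\ldots,i_s\}$, consistent with the commutativity of $b_I$.

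First I would describe the position-structure of a contributing $A$. As the $K_\ell$ are the \emph{maximal} intervals of $[n]\setminus A$, between two consecutive components at least one element of $A$ must occur, for otherwise their union would be an interval of the complement, contradicting maximality. Grouping the elements of $A$ by the gaps they fill, $A$ decomposes as $J_1\sqcup\cdots\sqcup J_{s+1}$, where $J_\ell$ collects the elements of $A$ lying between $K_{\ell-1}$ and $K_\ell$ (with $K_0$ before the start and $K_{s+1}$ after the end). Since no complement component lies strictly between $K_{\ell-1}$ and $K_\ell$, each $J_\ell$ is itself an interval, with $J_2,\ldots,J_s$ nonempty and $J_1, J_{s+1}$ possibly empty. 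Setting $v_\ell := w_{J_\ell}$ gives the factorization
\begin{equation*}
w_i = v_1\,w_{K_1}\,v_2\,w_{K_2}\cdots v_s\,w_{K_s}\,v_{s+1}, \qquad w_A = v_1 v_2\cdots v_{s+1},
\end{equation*}
so each contributing $A$ produces interval subwords $v_1,\ldots,v_{s+1}$ of exactly the prescribed type, with $v_1\cdots v_{s+1} = w_{i_0}$ and, since the multisets agree, a permutation $\sigma\in\Sigma_s$ satisfying $w_{K_\ell} = w_{i_{\sigma(\ell)}}$.

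Then I would check that $A\mapsto(v_1,\ldots,v_{s+1})$ is a bijection onto the selections counted in the statement. Conversely, a selection of positioned interval subwords $v_1,\ldots,v_{s+1}$ obeying the four listed conditions amounts to a choice of consecutive intervals $J_1,\ldots,J_{s+1}\subseteq[n]$ whose concatenated subwords give $w_{i_0}$ and whose intervening gaps spell $w_{i_{\sigma(1)}},\ldots,w_{i_{\sigma(s)}}$; setting $A := J_1\cup\cdots\cup J_{s+1}$ recovers a subset whose complement components are exactly those gaps, the nonemptiness of $v_2,\ldots,v_s$ ensuring that the gaps are genuinely distinct maximal components. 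The two maps are mutually inverse because the grouping of $A$ into the $J_\ell$ is canonical (it is forced by the components $K_\ell$), and the existence of $\sigma$ merely restates the multiset equality. This identifies the contributing subsets with the selections in the statement and yields the claimed value of $\lambda^{i;i_0}_{i_1,\ldots,i_s}$.

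The main obstacle I anticipate is making this correspondence airtight: one must insist that ``selecting interval subwords $v_1,\ldots,v_{s+1}$'' means selecting positioned intervals $J_\ell\subseteq[n]$ rather than abstract words (a given word may sit at several positions, and only positioned data determines $A$), and one must match the boundary behavior — the possibly empty $v_1, v_{s+1}$ against the necessarily nonempty middle pieces $v_2,\ldots,v_s$ and the necessarily nonempty complement components $K_\ell$ — so that nothing is over- or under-counted. The permutation $\sigma$ should enter only as an existence claim encoding the multiset condition, not as additional data summed over.
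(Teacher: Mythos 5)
Your proposal is correct and follows essentially the same route as the paper: both identify $\lambda^{i;i_0}_{i_1,\ldots,i_s}$ as the number of subsets $A$ contributing the term $w_{i_0}\otimes w_{i_1}\cdots w_{i_s}$ in \eqref{eq:STVCop} and then exhibit the bijection between such subsets and the positioned interval-subword selections (the paper runs the correspondence from selections to subsets, you run it from subsets to selections first, but it is the same bijection). Your explicit remark that the $v_j$ must be taken as positioned intervals $J_j\subseteq[n]$ is a point the paper's proof leaves implicit.
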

\begin{proof}
    Assume that there are $s+1$ interval subwords $v_1,\ldots,v_{s+1}$ of $w:=w_i = a_1\cdots a_n\in V^{\otimes n}$ such that $v_2,\ldots,v_s$ are non-empty, $w_{i_0}= v_1\cdots v_{s+1}$, and $w_i = v_1w_{i_{\sigma(1)}}v_2w_{i_{\sigma(2)}}\cdots v_sw_{i_{\sigma(s)}}v_{s+1}$ for a permutation $\sigma\in \Sigma_s$. 
    Now, for each $1\leq j\leq s+1$, define $B_j\subseteq[n]$ such that $v_j = w_{B_j}$. This produces a list of pairwise disjoint subsets $B_1,\ldots,B_{s+1}\subseteq [n]$ such that $B_2,\ldots, B_s\neq \emptyset$. We now set $A = B_1\cup \cdots \cup B_{s+1}$. The remaining indexes $[n]\backslash A$ can be grouped into $s$ non-empty pairwise disjoint and non-consecutive intervals $K_1,\ldots,K_s$. By construction, it is clear that $K(A) = \{K_1,\ldots,K_s\}$, $w_{i_0} = w_A$ and $\tilde{w}^{(A)} = w_{i_1}\cdot\ldots\cdot w_{i_s},$ so that every selection of $s+1$ internal subwords $v_1,\ldots,v_{s+1}$ as described at the beginning will produce term in the sum \eqref{eq:STVCop}. Since different selection of the internal subwords will produce different subsets $A$ that give the same term $w_{i_0}\otimes w_{i_1}\cdot\ldots\cdot w_{i_s},$ we have that $\lambda_{i_1,\ldots,i_s}^{i;i_0}$ counts the number of the desired decompositions of $w_i$, as we wanted.
    \end{proof}

Now we are ready to apply the above machinery to compute the antipode of $\sS(\tT_+(V))$ via the forest formula in \cref{thm:forestAntipode}. In order to write the forest formulas for a fixed $w_i\in\mathcal{W}$, we need to describe $\T_i$, the set of decorated trees associated to $w_i$.

\begin{comment}
    Let $w_i\in T_+(V)$. In order to give a characterization of the decorated trees in the forest formula for Theorem \ref{thm:forestform0} in terms of Schröder trees, it is convenient to consider the following notion.

\begin{definition}
    Let $w_i\in T_+(V)$ be a word such that $|w_i|=n$. For any $t\in \ST(n)$, we define the \textit{skeleton of $t$}, denoted by $\sk(t)$, as the non-planar tree given by the subtree determined by the internal vertices of $t$.
\end{definition}
\end{comment}

\begin{proposition}
\label{prop:lambdaCoef}
    Let $w_i\in \mathcal{W} \subset \tT_+(V)$ be a non-empty word such that $|w_i|=n$ and consider $\mathcal{T}_i'$ the subset of $T\in \mathcal{T}_i$ such that $\lambda(T) \neq 0$. 
    Then there exists a surjective map $\Lambda:\ST(n)\to \mathcal{T}_i'$ such that $|\Lambda^{-1}(T)| = \lambda(T)$, for any $T\in\mathcal{T}_i'$.
\end{proposition}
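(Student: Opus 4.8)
The plan is to construct $\Lambda$ as the \emph{decorated non-planar skeleton} map and to prove the fiber identity $\lvert\Lambda^{-1}(T)\rvert=\lambda(T)$ by induction on $n$; surjectivity onto $\mathcal{T}_i'$ is then automatic, since $\lambda(T)\geq 1$ for every $T\in\mathcal{T}_i'$. First I would \textbf{define} $\Lambda$ recursively. Given $t\in\ST(n)$ with $n\geq 1$, write $t=B_+(t_0,\dots,t_m)$ and let $A$ be the block of $\pi(t)$ attached to the root, with $K(A)=\{K_1,\dots,K_s\}$. If $t$ is a corolla ($s=0$), set $\Lambda(t):=\bullet_i$, the single leaf decorated by the index $i$ of the word $w=w_i$ carried by $t$. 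Otherwise each internal-vertex child of the root spans a subtree whose sectors form exactly one component $K_j$; after standardizing it (relabelling its sectors by the order-isomorphism $K_j\to[\lvert K_j\rvert]$) this subtree becomes $\hat t_j\in\ST(\lvert K_j\rvert)$ associated to the word $w_{i_j}:=w_{K_j}$, and I set $\Lambda(t):=B_+^{(i;i_0)}\bigl(\Lambda(\hat t_1),\dots,\Lambda(\hat t_s)\bigr)$, read as a non-planar decorated tree, with $w_{i_0}:=w_A$. By construction the underlying tree of $\Lambda(t)$ is $\widetilde{\sk}(t)$, and the decorations are read off exactly as in the construction of $\widetilde{\sk}(t)$ described above.

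Next I would check that $\Lambda$ lands in $\mathcal{T}_i'$. By \cref{lem:auxS7} the configuration induced by $t$ itself is one admissible decomposition of $w_i$, so the local coefficient $\lambda^{i;i_0}_{i_1,\dots,i_s}$ is at least $1$; since $\operatorname{sym}(F)\geq 1$ always, the recursive formula \eqref{def:lambdaT} together with the inductive hypothesis gives $\lambda(\Lambda(t))\neq 0$, that is, $\Lambda(t)\in\mathcal{T}_i'$.

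The core is the \textbf{fiber count}, proved by induction on $n$. The base case is a leaf $T=\bullet_i$: its only preimage is the corolla $C_{(n+1)}$, so the fiber has size $1=\lambda(\bullet_i)$. For the inductive step fix $T=B_+^{(i;i_0)}(T_1,\dots,T_s)\in\mathcal{T}_i'$ with $F=\{T_1,\dots,T_s\}$. Any $t\in\Lambda^{-1}(T)$ is determined uniquely by its \emph{top configuration}, the block $A$ attached to its root, together with a standardized subtree filling each component slot $K_j$. By \cref{lem:auxS7} the number of admissible top configurations, i.e.\ of subsets $A\subsetneq[n]$ with $w_A=w_{i_0}$ whose component words $\{w_{K_1},\dots,w_{K_s}\}$ equal the multiset $\{w_{i_1},\dots,w_{i_s}\}$, is exactly $\lambda^{i;i_0}_{i_1,\dots,i_s}$. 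Fixing one such $A$, the slots group by their word; for a word $u$ the $g_u$ slots carrying $u$ must receive, as $\Lambda$-images, precisely the sub-multiset of $F$ of children associated to $u$. Counting first the assignment of isomorphism types to the ordered slots (a multinomial coefficient) and then, by the inductive hypothesis, the $\lambda(T_j)$ choices of standardized Schröder tree realizing each assigned type, the product over all words $u$ collapses to $\operatorname{sym}(F)\cdot\prod_{j=1}^s\lambda(T_j)$, a number that does not depend on the chosen $A$. Summing over the $\lambda^{i;i_0}_{i_1,\dots,i_s}$ configurations yields
\begin{equation*}
\lvert\Lambda^{-1}(T)\rvert=\lambda^{i;i_0}_{i_1,\dots,i_s}\cdot\operatorname{sym}(F)\cdot\prod_{j=1}^s\lambda(T_j)=\lambda(T),
\end{equation*}
matching \eqref{def:lambdaT}. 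Since $\lambda(T)\geq 1$ on $\mathcal{T}_i'$, every such $T$ has a nonempty fiber, hence $\Lambda$ is surjective.

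The step I expect to be the main obstacle is the per-configuration filling count: showing that distributing the distinct isomorphism types of subtrees among the position-slots that happen to carry the same word contributes \emph{exactly} the multinomial $\operatorname{sym}(F)$, and that this count is the same for every admissible top configuration $A$. This is precisely where the non-planarity (the children of a decorated tree form a multiset) must be reconciled with the planar realization as a Schröder tree, and the bookkeeping that distinguishes subtrees which are merely word-equal from those which are genuinely isomorphic is the delicate point.
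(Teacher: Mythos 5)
Your proposal is correct and follows essentially the same route as the paper: define $\Lambda$ as the decorated non-planar skeleton, use \cref{lem:auxS7} to see that $\lambda^{i;i_0}_{i_1,\dots,i_s}$ counts the admissible top configurations $A$, and obtain the fiber size $\lambda^{i;i_0}_{i_1,\dots,i_s}\cdot\operatorname{sym}(F)\cdot\prod_j\lambda(T_j)$ by induction, with the $\operatorname{sym}(F)$ factor accounting for the allocation of repeated subtree types to the slots $K_1,\dots,K_s$. Your reorganization (proving the fiber identity first and deducing surjectivity from $\lambda(T)\geq 1$) and your explicit remark that every preimage is determined uniquely by its top configuration and slot fillings are minor refinements of the same argument, not a different proof.
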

\begin{proof}
First, we will give the description of the map $\Lambda$, which is given by a decorated version of the non-planar skeleton of $t$, $\widetilde{\sk}(t)$. Assume that $w:=w_i= a_1\cdots a_n$ with $a_1,\ldots,a_n\in V$. We generalize the decoration of corollas to more general trees as follows. Take a vertex $v$ of $\widetilde{\sk}(t)$ and consider it as an internal vertex of $t$. Then, let $B_v$ be the block of $\pi(t)$ associated to $v$. Then, we set $d_2(v)$ to be the index such that $w_{d_2(v)} = w_{B_v}$. For $d_1(v)$, note that $v$ is the root of a sub-Schröder tree $t'$ of $t$ consisting of $v$ and all its descendants. By setting $C_v = \bigcup_{B \in \pi(t')} B$, we set $d_1(v)$ to be the index such that $w_{d_1(v)} = w_{C_v}$. With this construction, define $\Lambda(t)$ as the decorated tree $T$ given by the non-planar skeleton of $t$ together with the previous decoration of the vertices of $\widetilde{\sk}(t)$. It is clear that if $r$ is the root of $T$, then $w_{d_1(r)} = w_i$. Furthermore, the previous construction implies that, for any $v\in \operatorname{Vert}(\widetilde{\sk}(t))$, $w_{d_2(v)}$ is a subword of $w_{d_1(v)}$. Also, the set of indexes 
$d_1(\operatorname{succ}(v)) := \{d_1(x)\,:\, x\in \operatorname{succ}(v)\}$ is such that $$\{w_{d_1(x)}\,:\,x\in \operatorname{succ}(v)\} = \{w_K\,:\,K\in K(B_v,C_v)\}.$$ 
Hence $\lambda_{d_1(\operatorname{succ}(v))}^{d_1(v);d_2(v)}\neq0$ for any $v\in \operatorname{Vert}(\widetilde{\sk}(t))$, which implies that $\lambda(T)\neq0$, so that $\Lambda(T)\in \T_i'$ and $\Lambda:\ST(n)\to \T_i'$ is well-defined.
\par Now, we prove that the map $\Lambda$ is surjective. Take $T\in \T_i'$. The proof will be done by induction on $k=|T|$, the number of vertices of $T$.  If $T$ is a single-vertex tree, i.e. $k=1$, we take $t\in\ST(n)$ to be the corolla with $n+1$ leaves, which is the unique Schröder tree such that $\Lambda(t) = T$.  Otherwise, assume that $T = B_+^{(i;i_0)}(T_1,\ldots,T_s)$, where $T_j$ is associated to $w_{i_j}$ for any $1\leq j\leq s$. Since $\lambda(T)\neq 0$, then we have that $m=\lambda^{i;i_0}_{i_1,\ldots,i_s}>0$. By \cref{lem:auxS7}, we can find $m$ non-empty subsets $A\subsetneq [n]$ such that $w_{i_0} = w_A$ and $\tilde{w}^{(A)} = w_{i_1}\cdot \ldots\cdot w_{i_s}$. Due to the possible repetitions in the subwords $w_{i_j}$, there are $\operatorname{sym}(B_-(T))$ ways to allocate the decorated trees $T_1,\ldots,T_s$ to the subwords $w_{K_1},\ldots,w_{K_s}$, where $K(A) = \{K_1,\ldots,K_s\}.$ Moreover, since $|T_j|< k$, we can find $\lambda(T_j)$ different Schröder trees $t_j\in \ST(|K_j|)$ such that their corresponding decorated skeletons are given by the same $T_j$, for each $1\leq j\leq s$. Hence, by grafting properly the Schröder trees $t_1,\ldots,t_s$ to a corolla with $|A|+1$ leaves, we can construct
$$m\operatorname{sym}(B_-(T))\lambda(T_1)\cdots \lambda(T_s) = \lambda(T)$$
Schröder trees $t\in \ST(n)$ with $k$ internal vertices such that $\Lambda(t) = T$, which completes the proof. 
%\par Finally, we proceed by induction. First, we construct a corolla $T_c$ with $|w_{i_0}|+1$ leaves and decorate its sectors with the letters of $w_{i_0}$. Next, since for any $1\leq l\leq s$, $T_l$ is associated to a $J_p$ and $|T_l| < k$, we can find $\lambda(T_l)$ different Schröder trees $t_l\in \ST(|w_{i_{j_l}}|)$ decorated with the subword determined by $J_p$ such that their corresponding decorated skeleton is $T_l$. By grafting these Schröder trees to $T_c$, we can construct $$\lambda^{i;i_0}_{i_1,\ldots,i_s}\operatorname{sym}(B^-(T))\lambda(T_1)\cdots \lambda(T_s) = \lambda(T)$$ Schröder trees $t$
%with $n+1$ leaves and $k$ internal vertices such that  $G(t) = T$. This completes the proof. 
\end{proof}

The main application of the previous proposition is that it allows us to rewrite the forest formulas \eqref{eq:forestformula0} and \eqref{eq:forestAntipode} in terms of Schröder trees. In particular, the second sum in \eqref{eq:forestformula0} can be indexed by $k$-linearizations of the skeleton of Schröder trees. For the purpose of writing the formulas, we introduce the next notation.

\begin{notation}
\label{not:iterated}
     Let $w \in \tT_+(V)$ be a non-empty word on $V$ such that $|w| = n$. For $t\in \ST(n)$ such that $\pi(t) = \{B_1,\ldots,B_r\}$ and $f\in k\operatorname{-lin}(\widetilde{\sk}(t))$, we define 
        $$
        \tilde{w}_t := \prod_{B\in \pi(t)} w_B = w_{B_1}\cdot \ldots \cdot w_{B_r}\in \sS(\tT_+(V))
        $$
     and
      \begin{equation}
        \tilde{c}(w,t,f) := \left(\prod_{B\in \pi_1} w_B\right)\otimes\cdots\otimes \left(\prod_{B\in\pi_k} w_{B}\right)\in \sS(\tT_+(V))^{\otimes k},
    \end{equation}
    where $\pi_j$ is the collection of blocks of $\pi(t)$ associated to the internal vertices $v$ of $t$ such that, regarded as elements of the poset $\widetilde{\sk}(t)$, $v\in f^{-1}(j)$, for $1\leq j\leq k$.
\end{notation}

Finally, we combine the description of the decorated trees in \cref{prop:lambdaCoef} with the forest formulas in \cref{thm:forestform0,thm:forestAntipode} in order to obtain the advertised formulas for the iterated reduced coproduct and the antipode in $\sS(\tT_+(V))$ in terms of Schröder trees.

\begin{theorem}
\label{thm:SymAntipode}
    The iterated reduced coproduct $\overline{\Delta}^{[k]}_\sS$ and the antipode $S_\sS$ in $\sS(\tT_+(V))$ can be written as
    \begin{align}
    \label{eq:iteratedCoproductS}
        \overline{\Delta}^{[k]}_\sS(w) =& \sum_{t\in \ST(n)} \sum_{f\in k\operatorname{-lin}(\widetilde{\sk}(t))} \tilde{c}(w,t,f),
\\ S_\sS(w) =& \sum_{t\in \ST(n)}(-1)^{i(t)} \tilde{w}_t \label{eq:SchroderSymAntipode}
    \end{align}
    for any word $w=a_1\cdots a_n\in \tT_+(V)$ and $k\geq1$.
   % \improvement{notation for the antipode in S(T(V))}
\end{theorem}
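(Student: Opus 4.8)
The plan is to specialize the Menous–Patras forest formulas of \cref{thm:forestform0,thm:forestAntipode} to the right-handed polynomial Hopf algebra $\sS(\tT_+(V))$, whose generating space $\tT_+(V)$ carries the countable basis $\mathcal{W}=\{w_j\}_{j\geq1}$, and then to reindex the resulting tree sums as sums over Schröder trees using the fibration $\Lambda$ of \cref{prop:lambdaCoef}. I would fix $w=w_i\in\mathcal{W}$ with $|w|=n$; the case of an arbitrary word $w=a_1\cdots a_n$ then follows because both sides of each asserted identity are multilinear in $a_1,\ldots,a_n$, so that it suffices to test on basis words (using also the reduction to the finite-dimensional space $V_w$ recorded above).

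First I would write down the two forest formulas for the generator $w_i$. \cref{thm:forestform0} gives $\overline{\Delta}^{[k]}_\sS(w_i)=\sum_{T\in\mathcal{T}_i}\sum_{f\in k\operatorname{-lin}(T)}\lambda(T)\,C(f)$, and \cref{thm:forestAntipode} gives $S_\sS(w_i)=\sum_{T\in\mathcal{T}_i}(-1)^{|T|}\lambda(T)\,\tau(T)$. In either sum only the trees with $\lambda(T)\neq0$ contribute, so the index set may be replaced by $\mathcal{T}_i'$. Next I would apply \cref{prop:lambdaCoef}: since $\Lambda\colon\ST(n)\to\mathcal{T}_i'$ is surjective with $|\Lambda^{-1}(T)|=\lambda(T)$, writing $\lambda(T)=\sum_{t\in\Lambda^{-1}(T)}1$ and interchanging the order of summation converts each weighted sum $\sum_{T\in\mathcal{T}_i'}\lambda(T)(\cdots)$ into $\sum_{T\in\mathcal{T}_i'}\sum_{t\in\Lambda^{-1}(T)}(\cdots)$, which equals $\sum_{t\in\ST(n)}(\cdots)$ because the fibers of $\Lambda$ partition $\ST(n)$.

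The decisive step is to verify that, for each $t$ in the fiber over $T$, the summand attached to $T$ coincides with the Schröder-tree quantity of \cref{not:iterated}. By the construction of $\Lambda$ the decorated tree $T=\Lambda(t)$ is the non-planar skeleton $\widetilde{\sk}(t)$, with vertex set $\operatorname{Int}(t)$; hence $|T|=|\operatorname{Int}(t)|=\internal(t)$ and $k\operatorname{-lin}(T)=k\operatorname{-lin}(\widetilde{\sk}(t))$. Because $d_2$ records at each vertex $v$ the index with $w_{d_2(v)}=w_{B_v}$, where $B_v\in\pi(t)$ is the associated block, I obtain $\tau(T)=\prod_{v\in\operatorname{Vert}(T)}w_{d_2(v)}=\prod_{B\in\pi(t)}w_B=\tilde{w}_t$, and, comparing the $j$-th tensor factors, also $C(f)=\tilde{c}(w,t,f)$. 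Substituting these equalities into the reindexed sums produces exactly \eqref{eq:iteratedCoproductS} and \eqref{eq:SchroderSymAntipode}.

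I expect this last paragraph to be the main obstacle: one must be sure that the Menous–Patras summands $C(f)$ and $\tau(T)$, which a priori see only the decorated tree $T$, agree with the intrinsically Schröder-theoretic $\tilde{c}(w,t,f)$ and $\tilde{w}_t$ for \emph{every} $t$ in the fiber $\Lambda^{-1}(T)$. This constancy along fibers, together with the identification of the common value with the desired expression, is precisely what the decoration rule $w_{d_2(v)}=w_{B_v}$ of \cref{prop:lambdaCoef} guarantees, and it is the hinge that lets the weight $\lambda(T)$ be absorbed into an unweighted sum over $\ST(n)$.
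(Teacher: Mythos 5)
Your proposal is correct and follows essentially the same route as the paper, which proves the theorem exactly by combining the Menous--Patras forest formulas of \cref{thm:forestform0,thm:forestAntipode} with the fibration $\Lambda$ of \cref{prop:lambdaCoef} to absorb the weights $\lambda(T)$ into an unweighted sum over $\ST(n)$. The paper leaves this combination as a one-line assertion, and your verification that $\tau(T)=\tilde{w}_t$ and $C(f)=\tilde{c}(w,t,f)$ along each fiber supplies precisely the details it omits.
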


\section{The antipode formula of $\tT(\tT_+(V))$ in terms of Schröder trees}
\label{sec:antipodeTT}
Motivated by the formulas obtained in \cref{thm:SymAntipode} for the antipode in $\sS(\tT_+(V))$, it is natural to ask if analogue formulas hold for the case of $\tT(\tT_+(V))$. In case of a positive answer, one should give a proper order in which words are multiplied due to the non-commutativity of the double tensor Hopf algebra. In this section, we study in detail this issue, and also present and prove combinatorial formulas for the iterated reduced coproducts and the antipode $S$ in $\tT(\tT_+(V))$ in terms of Schröder trees. More precisely, we obtain a Schröder tree-type formula for the action of the reduced coproduct on products of words, that will be used to obtain a combinatorial formula for the iterated reduced coproduct. Finally, by analyzing the cancellations in Takeuchi's formula \eqref{eq:takeuchi}, we will then arrive at a formula for the antipode in double tensor Hopf algebra.
%\\\par Following the approach applied in the case of the antipode in a symmetric algebra $\sS(\tT_+(V))$, the initial problem that appears is the fact that the commutativity of the symmetric algebra is reflected combinatorially by the non-planar rooted trees used in the forest formulas of \cite{Menous2018}. One can think that the non-commutative case is covered by using instead planar rooted trees but this is not completely obvious. However, the planar and non-planar skeletons of Schröder trees contain the same information as posets. In the non-commutative case, the idea to follow is then describing the order in which words are multiplied when we iterate the reduced coproduct $\overline{\Delta}$.

\begin{remark}
    \label{rmk:naturaltotal}
    Let $t$ be a Schröder tree. Recall that the set of internal vertices of $t$, $\operatorname{Int}(t)$, i.e.~the set of vertices of $\sk(t)$, can be regarded as a poset. Furthermore, the planarity of $t$ induces a natural total order on $\operatorname{Int}(t)$. More precisely, if $v,w\in\operatorname{Int}(t)$, we have that $v<w$ if $w$ is a descendant of $v$, or if $v$ is to the left of $w$ in the planar representation of $t$. This total order is called the \emph{planar order on $\operatorname{Int}(t)$}.%Notice that this natural total order induces a monotone non-crossing partition $\pi(t) = (B_1,\ldots,B_{i(t)}) \in\mathcal{M}(n)$.
\end{remark}

\subsection{Action of the reduced coproduct on products of words} We are interested in computing the antipode by using Takeuchi's formula
$$S(w) = \sum_{k\geq1} (-1)^k m^{[k]}\circ \overline{\Delta}^{[k]}(w)$$
for $w=a_1\cdots a_n\in \tT_+(V)$. By the definition of the iterated reduced coproduct, it will be convenient to have a combinatorial formula, in terms of Schröder trees, for the action of $\overline{\Delta}$ on products of words. To this end, recall that a {Schröder forest} is an ordered sequence of Schröder trees $F=(t_1,\ldots,t_m)$. Also, the skeleton of $F$, $\sk(F)$, can be regarded as a poset. In particular, it makes sense to consider $k$-linearizations of $\sk(F)$. For the particular case of $2$-linearizations, notice that a Schröder forest $F$ that has a 2-linearization $f$ is either a forest formed by two corollas, or a forest formed by corollas and at least a Schröder tree of height 2. Observe that the elements in $f^{-1}(1)$ form a subset of the roots of the Schröder trees in $F$.

\

Now, let $\mathfrak{w}=w_1|\cdots|w_m \in \tT_+(V)^{\otimes m}$ be a product of non-empty words with $|w_i| = n_i$ for any $1\leq i\leq m$. Also, let $F=(t_1,\ldots,t_m)\in \mathsf{FSch}(n_1,\ldots,n_m)$ be a Schröder forest, and consider a 2-linearization $f\in 2\operatorname{-lin}(\sk(F))$. Since $F$ is an ordered forest, we can order the elements in $f^{-1}(1) = \{v_1,\ldots,v_l\}$ from left to right. Now, for each $i\in[m]$, we set $B_i\subseteq [n_i]$ as follows: first, label consider the natural labelling of $t_i$ by the elements of $[n_i]$. Then, if the root of $t_i$ is not in $f^{-1}(1)$, then $B_i:= \emptyset$. Otherwise, if $v_{j_i}\in f^{-1}(1)$ is the root of $t_{i}$, then $B_{i}$ is given by the labels of the sectors adjacent to $v_{j_i}$. With this, we set
$$d_1(\mathfrak{w},F,f) := (w_1)_{B_1}|\cdots|(w_m)_{B_m}.$$

On the other hand, let $u_{i_1},\ldots,u_{i_{s_i}}$ be the internal vertices of $t_i$ such that $f(u_{i_j}) = 2$, for $1\leq i\leq m$. If the root if $t_i$ is in $f^{-1}(2)$, then $t_i$ is necessarily a corolla and $s_i=1$. In this case, we set $D_{i_1} := [n_i]$. Otherwise, we can order the vertices $u_{i_1}< \cdots < u_{i_{s_i}}$ according to the planar order of $t_i$. Next, by labelling the sectors of $t_i$ with its natural labelling, we define $D_{i_j}$ to be the subset of $[n_i]$ given by the labels of the sectors adjacent to $u_{i_j}$, for $1\leq j\leq s_i$. Hence, we can define
$$d_2(\mathfrak{w},F,f) :=  (w_1)_{D_{1_1}}|\cdots|(w_1)_{D_{1_{s_1}}}|(w_2)_{D_{2_1}} |\cdots|(w_2)_{D_{2_{s_2}}}|\cdots |(w_m)_{D_{m_1}}|\cdots|(w_m)_{D_{m_{s_m}}}.$$
From this notation, we can finally state a Schröder forest type formula for the reduced coproduct of products of words as follows.

\begin{lemma}
    \label{lem:CopProduct}
    Let $\mathfrak{w}=w_1|\cdots|w_m \in \tT_+(V)^{\otimes m}$ be a product of non-empty words in $\tT_+(V)$. If $n_i:= |w_i|$ for any $1\leq i\leq m$, then the action of the reduced coproduct on $\mathfrak{w}$ is given by
    \begin{equation}
    \label{eq:redCoponProducts}
        \overline{\Delta}(\mathfrak{w}) = \sum_{F\in \mathsf{FSch}(n_1,\ldots,n_m)} \sum_{h\in 2\operatorname{-lin}(\sk(F))} d_1(\mathfrak{w},F,h)\otimes d_2(\mathfrak{w}, F,h).
    \end{equation}
\end{lemma}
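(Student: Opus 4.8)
The plan is to compute $\Delta(\mathfrak{w})$ directly from multiplicativity and then reorganize the resulting sum into the Schröder-forest indexing. Since the coproduct is an algebra morphism for the bar product, $\Delta(\mathfrak{w})=\Delta(w_1)|\cdots|\Delta(w_m)$; substituting the single-word formula $\Delta(w_i)=\sum_{I_i\subseteq[n_i]}(w_i)_{I_i}\otimes (w_i)^{(I_i)}$ and using that the bar on the tensor square is componentwise, $(a\otimes b)|(c\otimes d)=(a|c)\otimes(b|d)$, gives
\[
\Delta(\mathfrak{w})=\sum_{(I_1,\dots,I_m)}\Big((w_1)_{I_1}|\cdots|(w_m)_{I_m}\Big)\otimes\Big((w_1)^{(I_1)}|\cdots|(w_m)^{(I_m)}\Big),
\]
the sum running over all tuples with $I_i\subseteq[n_i]$. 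Only the tuple with every $I_i=[n_i]$ produces $\mathfrak{w}\otimes\uno$ (its right factor being the empty bar product $\uno$), and only the tuple with every $I_i=\emptyset$ produces $\uno\otimes\mathfrak{w}$; hence $\overline{\Delta}(\mathfrak{w})$ is the same sum with exactly these two tuples deleted.

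The core of the argument is a dictionary, for a single word $w$ of length $n$, between subsets $I\subseteq[n]$ and pairs consisting of a Schröder tree $t\in\ST(n)$ that is either a corolla or of height $2$, together with an assignment of a level in $\{1,2\}$ to the root of $t$. Using the correspondence $A\mapsto t_A$ recalled before \cref{lem:decoration} (via $\sigma_A=\{A\}\cup K(A)$), I would send a proper nonempty $I$ to the height-$2$ tree $t_I$ with its root forced to level $1$ and all second-level internal vertices to level $2$; the subset $I=[n]$ to the corolla $C_{(n+1)}$ with root at level $1$; and $I=\emptyset$ to the same corolla with root at level $2$. In each case the root-block of $t$ equals $I$ while the blocks of the second-level vertices are exactly the connected components $K(I)=\{K_1,\dots,K_r\}$; consequently $(w)_{I}$ realizes the root-block contribution ``$w_B$ if the root is at level $1$, else $\uno$'', and $(w)^{(I)}=w_{K_1}|\cdots|w_{K_r}$ realizes the concatenation of the level-$2$ blocks read in planar order. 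The one point needing care here is this last ordering claim: I must check that the planar order on the children of the root of $t_I$ agrees with the ordering of $K_1,\dots,K_r$ by increasing minimum, which follows from the definition of the natural labelling $\iota_{t_I}$ together with the convention $\min K_1<\cdots<\min K_r$.

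Assembling these single-word dictionaries over $i=1,\dots,m$, a tuple $(I_1,\dots,I_m)$ corresponds to a Schröder forest $F=(t_1,\dots,t_m)\in\mathsf{FSch}(n_1,\dots,n_m)$ together with a map $h\colon\sk(F)\to[2]$ sending each second-level internal vertex to $2$, each root of a height-$2$ tree to $1$, and each corolla root to the level dictated by the corresponding $I_i$. Such an $h$ is automatically strictly order-preserving, since the only order relations in $\sk(F)$ are those between a height-$2$ root (value $1$) and its children (value $2$). Thus $h$ is a $2$-linearization precisely when it is surjective, and $h$ fails surjectivity exactly when all trees are corollas with all roots sent to $1$ (forcing $h^{-1}(2)=\emptyset$, the deleted tuple $I_i=[n_i]$) or all roots sent to $2$ (forcing $h^{-1}(1)=\emptyset$, the deleted tuple $I_i=\emptyset$). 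Hence the tuples surviving in $\overline{\Delta}$ correspond bijectively to pairs $(F,h)$ with $h\in 2\operatorname{-lin}(\sk(F))$. Matching the tensor factors termwise then finishes the proof: the left factor $(w_1)_{I_1}|\cdots|(w_m)_{I_m}$ equals $d_1(\mathfrak{w},F,h)$ since $(w_i)_{I_i}=(w_i)_{B_i}$ in every case, and the right factor $(w_1)^{(I_1)}|\cdots|(w_m)^{(I_m)}$ equals $d_2(\mathfrak{w},F,h)$ once the within-tree planar ordering is identified with the $K$-component ordering and the across-tree concatenation order $i=1,\dots,m$ is matched, yielding \eqref{eq:redCoponProducts}.

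I expect the main obstacle to be purely organizational: keeping the three orderings mutually consistent — the planar order on the level-$2$ internal vertices of each $t_i$, the increasing-minimum order on the connected components $K(I_i)$, and the left-to-right order of the trees in $F$ — and verifying that the global surjectivity condition on $h$ matches, term for term, the two tuples deleted when passing from $\Delta$ to $\overline{\Delta}$.
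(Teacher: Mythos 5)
Your proof is correct, but it takes a genuinely different route from the paper's. The paper argues by induction on $m$: it writes $\mathfrak{w}=\mathfrak{w}'|w_{m+1}$, expands $\Delta(\mathfrak{w}'|w_{m+1})$ into the seven terms coming from $\big(\overline{\Delta}(\mathfrak{w}')+\mathfrak{w}'\otimes\uno+\uno\otimes\mathfrak{w}'\big)\big|\big(\overline{\Delta}(w_{m+1})+w_{m+1}\otimes\uno+\uno\otimes w_{m+1}\big)$ minus the two primitive terms, and matches each of the seven surviving terms with a class of $2$-linearizations of $\sk(F)$ distinguished by how $g$ restricts to $\sk(F')$ and to $\sk(t)$. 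You instead expand $\Delta(\mathfrak{w})=\Delta(w_1)|\cdots|\Delta(w_m)$ over all tuples $(I_1,\ldots,I_m)$ at once and build a single global bijection with pairs $(F,h)$, identifying the two non-surjective level assignments (all roots at level $1$, all trees corollas; all roots at level $2$, all trees corollas) with precisely the two terms removed in passing to $\overline{\Delta}$. Both arguments ultimately rest on the same single-word dictionary $I\leftrightarrow t_I$ underlying \cref{lem:decoration}, and your ordering checks (planar order of the root's children versus the increasing-minimum order on $K(I_i)$, and the left-to-right concatenation over $i$) are exactly the ones needed; your version trades the induction and the seven-case bookkeeping for one bijection plus the observation that surjectivity of $h$ can fail in exactly two ways, which is arguably cleaner. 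The only point worth making explicit is that your dictionary produces only forests whose trees have height at most $2$, while the sum in \eqref{eq:redCoponProducts} formally ranges over all of $\mathsf{FSch}(n_1,\ldots,n_m)$; this is harmless because $2\operatorname{-lin}(\sk(F))=\emptyset$ whenever some tree of $F$ has height at least $3$, so all other forests contribute empty inner sums.
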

\begin{proof}
The proof is an adaptation of the proof of \cite[Lem. 11]{Menous2018}. For the reader's convenience, we present a complete proof. The proof is done by induction on $m\geq1$. The base case $m=1$ follows using the same argument that in \cref{lem:decoration} and taking into account the order of the product of subwords that appears in the second component of $\overline{\Delta}$. Now, assume that the relation in \eqref{eq:redCoponProducts} is true for a fixed $m\geq 1$. We will prove that the equation is also valid for $m+1$. 
\par For the proof, consider a product of non-empty words $\mathfrak{w} = \mathfrak{w}'|w_{m+1}$, where $\mathfrak{w}' = w_1|\cdots|w_m$. Since $\Delta$ is an algebra morphism, we have.
\begin{eqnarray}
 \nonumber   \overline{\Delta}(\mathfrak{w}) &=& \Delta(\mathfrak{w}'|w_{m+1}) - \mathfrak{w}\otimes\uno - \uno\otimes\mathfrak{w}
    \\ &=& \big(  \overline{\Delta}(\mathfrak{w}') + \mathfrak{w}'\otimes\uno + \uno\otimes \mathfrak{w}' \big)|\big( \overline{\Delta}(w_{m+1}) + w_{m+1}\otimes\uno + \uno \otimes w_{m+1} \big) - \mathfrak{w}\otimes\uno - \uno\otimes\mathfrak{w} \nonumber
    \\ &=& \overline{\Delta}(\mathfrak{w}')|\overline{\Delta}(w_{m+1}) + \overline{\Delta}(\mathfrak{w}')|(w_{m+1}\otimes \uno ) + \overline{\Delta}(\mathfrak{w}')|(\uno\otimes w_{m+1} ) \nonumber
    \\ & & + (\mathfrak{w}'\otimes\uno )| \overline{\Delta}(w_{m+1}) + (\uno\otimes \mathfrak{w}')| \overline{\Delta}(w_{m+1}) + \mathfrak{w}'\otimes w_{m+1} + w_{m+1}\otimes \mathfrak{w}'. \label{eq:auxLemaCopProdcutW}
\end{eqnarray}

Let us analyze the first of the seven terms on the right-hand side of the above equation. By the induction hypothesis, the term $\overline{\Delta}(\mathfrak{w}')|\overline{\Delta}(w_{m+1})$ is equal to
\begin{eqnarray*}
   \overline{\Delta}(\mathfrak{w}')|\overline{\Delta}(w_{m+1}) &=& \left( \sum_{F'\in \mathsf{FSch}(n_1,\ldots,n_m)} \sum_{h\in 2\operatorname{-lin}(\sk(F'))} d_1(\mathfrak{w}',F',h)\otimes d_2(\mathfrak{w}', F',h)
   \right)\\ & & \Big|\left(\sum_{t\in \mathsf{Sch}(n_{m+1})} \sum_{f\in 2\operatorname{-lin}(\sk(t))} d_1(w_{m+1},t,f)\otimes d_2(w_{m+1}, t,f)\right)
\end{eqnarray*}
First observe that, given $F'\in \mathsf{FSch}(n_1,\ldots,n_m)$ and $t\in\ST(n_{m+1})$, we can construct $F \in \mathsf{FSch}(n_1,\ldots,n_{m+1})$ by adding $t$ at the end of the sequence $F$. Furthermore, given $h\in 2\operatorname{-lin}(\sk(F'))$ and $f\in 2\operatorname{-lin}(\sk(t))$, we construct $g\in 2\operatorname{-lin}(\sk(F))$ by the setting $g(v) = h(v)$ if $v\in \operatorname{Vert}(\sk(F'))$ and $g(v) = f(v)$ if $v\in \operatorname{Vert}(\sk(t))$. Notice that, in particular,  $g$ is a $2$-linearization such that the sets $g^{-1}(i)\cap \operatorname{Vert}(\sk(F'))$ and $g^{-1}(i) \cap \operatorname{Vert}(\sk(t))$ are non-empty, for $i=1,2$. Hence, the above sum can be indexed by the described pair $(F,g)$, and the corresponding term in the sum is precisely 
\begin{equation}
\label{eq:AuxLemma4}
d_1(\mathfrak{w}',F',h)| d_1(w_{m+1},t,f)\otimes d_2(\mathfrak{w}', F',h)| d_2(w_{m+1}, t,f) =  d_1(\mathfrak{w},F,g)\otimes d_2(\mathfrak{w}, F,g).
\end{equation}

Working in the same way, it turns out that each of the remaining six terms in the right-hand side of the last equation in \eqref{eq:auxLemaCopProdcutW} can be rewritten as a double sum indexed by Schröder forests $F \in \mathsf{FSch}(n_1,\ldots,n_{m_1})$ given in the same way as in the first case, and a $2$-linearization $g$ of $\sk(F)$ given as follows:
\begin{itemize}
    \item for $ \overline{\Delta}(\mathfrak{w}')|(w_{m+1}\otimes \uno )$, $g$ is such that $g(v) = 1$ for any $v\in\operatorname{Vert}(\sk(t))$, and \linebreak $g^{-1}(i) \cap \operatorname{Vert}(\sk(F'))\neq\emptyset$ for $i=1,2$; 
    \item for $ \overline{\Delta}(\mathfrak{w}')|(\uno\otimes w_{m+1} )$, $g$ is such that $g(v) = 2$ for any $v\in\operatorname{Vert}(\sk(t))$, and\linebreak $g^{-1}(i) \cap \operatorname{Vert}(\sk(F'))\neq\emptyset$ for $i=1,2$; 
    \item for $ (\mathfrak{w}'\otimes\uno )| \overline{\Delta}(w_{m+1})$, $g$ is such that $g(v) = 1$ for any $v\in\operatorname{Vert}(\sk(F'))$, and \linebreak$g^{-1}(i) \cap \operatorname{Vert}(\sk(t))\neq\emptyset$ for $i=1,2$;  
    \item for $(\uno\otimes \mathfrak{w}')| \overline{\Delta}(w_{m+1}) $, $g$ is such that $g(v) = 2$ for any $v\in \operatorname{Vert}(\sk(F'))$, and \linebreak$g^{-1}(i) \cap \operatorname{Vert}(\sk(t))\neq\emptyset$ for $i=1,2$;  
    \item for $\mathfrak{w}'\otimes w_{m+1}$, $g$ is such that $g(v) =1$ for any $v\in \operatorname{Vert}(\sk(F'))$, and $g(u)=2$ for any $u\in \operatorname{Vert}(\sk(t))$; 
    \item for $w_{m+1}\otimes \mathfrak{w}'$, $g$ is such that $g(v) = 2$ for any $v\in \operatorname{Vert}(\sk(F'))$, and $g(u) = 1$ for any $u\in\operatorname{Vert}(\sk(t))$.
\end{itemize}
Conversely, it is clear that 
{\small{
$$\mathsf{FSch}(n_1,\ldots,n_m)\times \ST(n_{m+1}) \ni ((t_1,\ldots,t_m),t)\mapsto F = (t_1,\ldots,t_m,t)\in \mathsf{FSch}(n_1,\ldots,n_m,n_{m+1}) $$}}is a bijection, and any $g\in 2\operatorname{-lin}(\sk(F))$ can be obtained in a unique way as one of the previous seven cases discussed. Hence, we have that the right-hand side of \eqref{eq:auxLemaCopProdcutW} can be re-indexed as a sum of pairs $(F,g)\in \mathsf{FSch}(n_1,\ldots,n_m,n_{m+1})\times 2\operatorname{-lin}(\sk(F))$ and, by the definition of $d_i$, the corresponding term indexed by $(F,g)$ is as in the right-hand side of \eqref{eq:AuxLemma4}. Therefore, we finally conclude that
$$ 
        \overline{\Delta}(\mathfrak{w}) = \sum_{F\in \mathsf{FSch}(n_1,\ldots,n_{m+1})} \sum_{g\in 2\operatorname{-lin}(\sk(F))} d_1(\mathfrak{w},F,g)\otimes d_2(\mathfrak{w}, F,g),
$$
so that the induction and the proof are now complete.
\end{proof}
%It turns out that a formula for the iterations of the reduced coproduct can also be written in terms of $k$-linearizations, similar to the commutative case. For this purpose, we introduce the next notation.

\subsection{Action of the iterated reduced coproduct} The formula obtained in \cref{eq:redCoponProducts} is one of the ingredients to obtain a Schröder tree-type formula for the iterated reduced coproduct. In the same way as the commutative case, the formula will be indexed by $k$-linearization. However, in order to take into account the non-commutativity in $\tT(\tT_+(V))$, it will be convenient to introduce the following notation.

\begin{notation}
\label{not:extension}
    Let $t\in\ST(n)$ and $f\in k\operatorname{-lin}(\sk(t))$. We denote $\overline{f}:\operatorname{sk}(t)\to [i(t)]$ the unique order-preserving bijection given by the following conditions:
    \begin{enumerate}
        \item if $f(v)<f(w)$ then $\overline{f}(v) < \overline{f}(w)$;
        \item if $f^{-1}(j) = \{v_1,\ldots,v_s\}$, where $v_1<\cdots <v_s$ according to the planar order, then $\overline{f}(v_i)= \overline{f}(v_1) + i-1$, for every $i=1,\ldots,s$.
    \end{enumerate}
\end{notation}

Let $t\in\ST(n)$ and consider its associated non-crossing partition $\pi(t)\in\NC(n)$. In addition, if $f$ is a $k$-linearization of $\sk(t)$, then the map $\overline{f}$ induces a total order on the blocks of $\pi(t)$ which respects the partial order given by the nestings of the blocks. In other words, the pair $(t,\overline{f})$ can be equivalently regarded as a monotone partition $\pi(t,\overline{f}) = (C_1,\ldots,C_{i(t)})$. By setting $j_1,\ldots,j_{k-1}$ to be the indexes such that $C_{j_{i-1}+1},C_{j_{i-1}+2},\ldots,C_{j_{i}}$ (with $j_0=0$ and $j_{k}=i(t)$) are the blocks whose associated internal vertices satisfy that $f(v) = i$, for $i=1,\ldots,k$, we can also write $\pi(t,\overline{f}) = (C_{j_0+1},\ldots,C_{j_k})$. Thus, for a word $w=a_1\cdots a_n\in V^{\otimes n}$, we define the element $c(w,t,f)\in \tT(\tT_+(V))^{\otimes k}$ by the recipe
    \begin{equation}
        c(w,t,f) := w_{C_{j_0+1}}|\cdots|w_{C_{j_1}}\otimes w_{C_{j_1+1}}|\cdots | w_{C_{j_2}}\otimes \cdots \otimes w_{C_{j_{k-1}+1}}|\cdots |w_{C_{j_k}}.
    \end{equation}
 
Observe that the element $c(w,t,f)$ is simply a $k$-order pure tensor obtained from the Schröder tree $t$, whose components are indexed by the $k$-linearization $f$ and, in the case that there are two or more elements with the same index, then we multiply them from left to right, following the definition of the coproduct $\Delta$. With this notation, we can now state and prove the following formula for the $k$-th iteration of the reduced coproduct in terms of Schröder trees and $k$-linearizations.

\begin{theorem}
\label{thm:iteratedDelta}
    For a word $w=a_1\cdots a_n \in \tT_+(V)$ and $k\geq1$, we have
    \begin{equation}
    \label{eq:iteratedDelta}
        \overline{\Delta}^{[k]}(w) = \sum_{t\in \ST(n)}\sum_{f\in k\operatorname{-lin}(\sk(t))} c(w,t,f).
    \end{equation}
\end{theorem}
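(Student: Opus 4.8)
The plan is to argue by induction on $k$, using the standard fact that for the connected graded bialgebra $\tT(\tT_+(V))$ the reduced iterated coproduct satisfies $\overline{\Delta}^{[k+1]} = (\id^{\otimes(k-1)}\otimes\overline{\Delta})\circ\overline{\Delta}^{[k]}$. For the base case $k=1$ one checks that the only $t\in\ST(n)$ whose skeleton $\sk(t)$ admits a $1$-linearization is the corolla $C_{(n+1)}$ (a $1$-linearization forces $\sk(t)$ to be an antichain, hence, being connected with a root, a single vertex), and that $c(w,C_{(n+1)},f)=w_{[n]}=w$, which matches $\overline{\Delta}^{[1]}=\id$. First I would record that for a $2$-linearization $f$ the element $c(w,t,f)$ coincides with $d_1(w,t,f)\otimes d_2(w,t,f)$, so that the case $k=2$ is exactly \cref{lem:CopProduct} applied to a single word ($m=1$).

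For the inductive step I would apply $\id^{\otimes(k-1)}\otimes\overline{\Delta}$ to the formula for $\overline{\Delta}^{[k]}(w)$ supplied by the induction hypothesis. The first $k-1$ tensor factors of each $c(w,t,f)$ are untouched, while the last factor is the product of words $\mathfrak{w}^{(k)}_{t,f}:=w_{C_{j_{k-1}+1}}|\cdots|w_{C_{j_k}}$ indexed by the internal vertices at level $k$ (those $v$ with $f(v)=k$), listed in planar order. Expanding $\overline{\Delta}(\mathfrak{w}^{(k)}_{t,f})$ by \cref{lem:CopProduct} produces a double sum over Schröder forests $F=(\tau_1,\dots,\tau_{m_k})$ of the matching degrees, where $m_k:=j_k-j_{k-1}$, and over $2$-linearizations $h\in 2\operatorname{-lin}(\sk(F))$.

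The heart of the argument is then a re-indexing: the quadruples $(t,f,F,h)$ should be in bijection with the pairs $(t',f')$ where $t'\in\ST(n)$ and $f'\in (k+1)\operatorname{-lin}(\sk(t'))$, in such a way that the associated summand equals $c(w,t',f')$. The tree $t'$ is obtained from $t$ by replacing, at each level-$k$ vertex $v_i$ (a corolla of $t$, since level-$k$ vertices are maximal in $\sk(t)$), that corolla by the Schröder tree $\tau_i$ of the same degree; the labelling $f'$ agrees with $f$ on the vertices of level $<k$, sends the $h$-shallow internal vertices of the $\tau_i$ to level $k$, and the $h$-deep ones to level $k+1$. The subtle point — and the one I expect to be the main obstacle — is that a level-$k$ vertex of $t$ whose tree $\tau_i$ has its root at $h$-level $2$ is forced to be a corolla that is \emph{promoted} in its entirety to level $k+1$, keeping its parent at level $<k$; this is precisely what produces the $(k+1)$-linearizations in which a top-level vertex sits below a vertex of level strictly less than $k$, so this case must be tracked carefully to verify that every $(k+1)$-linearization is attained.

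To close the induction I would exhibit the inverse map, contracting $(t',f')$ back to $(t,f,F,h)$: the level-$(k+1)$ vertices whose parent lies at level $k$ are the deep vertices of the $\tau_i$ and are absorbed into their parents to recover the corollas of $t$, while those whose parent lies at level $<k$ are the promoted corollas, relabelled to level $k$ in $f$. One then checks that the resulting $f$ is a surjective, strictly order-preserving map onto $[k]$ (surjectivity onto $[k-1]$ and onto $\{k\}$ both following from surjectivity of $f'$ onto $[k+1]$), that $t\in\ST(n)$, and that $(F,h)$ is recovered uniquely, so the correspondence is a genuine bijection. Matching the summands then reduces to comparing, factor by factor, the definition of $c(w,t',f')$ with the tensor $\bigl(\text{first }k-1\text{ factors of }c(w,t,f)\bigr)\otimes d_1(\mathfrak{w}^{(k)}_{t,f},F,h)\otimes d_2(\mathfrak{w}^{(k)}_{t,f},F,h)$, using that $d_1$ collects the $h$-shallow blocks, now at level $k$, and $d_2$ the $h$-deep blocks, now at level $k+1$, in the planar order encoded by $\overline{f}$.
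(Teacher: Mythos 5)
Your proposal follows essentially the same route as the paper's proof: induction on $k$ with \cref{lem:CopProduct} applied to the last tensor factor, followed by the bijective re-indexing of quadruples $(t,f,F,h)$ by pairs $(t',f')$, where $t'$ is obtained by substituting the Schr\"oder trees of $F$ for the level-$k$ corollas of $t$ and $f'$ splits level $k$ according to $h$. The ``promoted corolla'' subtlety you flag (a tree of $F$ whose root lies in $h^{-1}(2)$) is exactly the case the paper's converse construction handles, so your argument is correct and matches the published one.
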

%\unsure{escribir mejor $k-\operatorname{lin}(t)$ en lugar de $\sk(t)$}
\begin{proof}
    Analogously to the case of \cref{lem:CopProduct}, the present proof is an adaptation of the proof of \cite[Lem. 12]{Menous2018}, and we also provide a complete proof. The proof is done by induction on $k\geq2$. The base case $k=2$ follows from \cref{lem:decoration}. Now, assume that the relation in \eqref{eq:iteratedDelta} is true for a fixed $k\geq 2$. We will prove that the \eqref{eq:iteratedDelta} is valid for $k+1$. 
\par    Let $w=a_1\cdots a_n\in \tT_+(V)$. By the definition of the iterated reduced coproduct and the induction hypothesis, we have that
    \begin{eqnarray*}
        \overline{\Delta}^{[k+1]}(w) &=& (\id^{\otimes(k-1)}\otimes \overline{\Delta})\circ \overline{\Delta}^{[k]}(w)
        \\ &=& \sum_{t\in \ST(n)}\sum_{f\in k\operatorname{-lin}(\sk(t))} c_1^f\otimes\cdots \otimes c_{k-1}^f \otimes \overline{\Delta}(c_k^f),
    \end{eqnarray*}
    where we have written $c(w,t,f) = c_1^f\otimes \cdots \otimes  c_k^f$. If $c_k^f = w_1|\cdots |w_m$, where $n_i := |w_i|$ for $1\leq i\leq m$, we can use \cref{lem:CopProduct} in order to write 
    $$\overline{\Delta}(c_k^f)  = \sum_{F\in\mathsf{FSch}(n_1,\ldots,n_m)} \sum_{h\in2\operatorname{-lin}(\sk(F))} d_1^h \otimes d_2^h,$$
    so that we have
    $$\overline{\Delta}^{[k+1]}(w) = \sum_{\substack{t\in \ST(n)\\f\in k\operatorname{-lin}(\sk(t))}} \sum_{\substack{F \in \mathsf{FSch}(n_1,\ldots,n_m)\\h\in 2\operatorname{-lin}(\sk(F))}} c_1^f\otimes\cdots \otimes c_{k-1}^f \otimes d_1^h\otimes d_2^h.$$
    Our objective is to rearrange the above sum as a sum indexed by pairs $(t',g)$, where $t'\in\ST(n)$ and $g\in (k+1)\operatorname{-lin}(\sk(t'))$. First, consider $t\in \ST(n)$, $f\in k\operatorname{-lin}(\sk(t))$, $F\in \mathsf{FSch}(n_1,\ldots,n_m)$ and $h\in 2\operatorname{-lin}(\sk(F))$. By definition, a forest $F$ that provides a non-zero contribution in the above sum is either a forest formed by two corollas, or a forest formed by corollas and at least a Schröder tree of height 2. Every Schröder tree in $F$ appears by applying the coproduct on a factor of $c_k^f$, which is indexed by an internal vertex of $t$. Each of these internal vertices together with its adjacent sectors define a corolla. Thus, we can replace each of those corollas with the corresponding tree in $F$, obtaining in this way a Schröder tree $t'$ with $n+1$ leaves. On the other hand, observe that the vertices in the set $f^{-1}(k)$ form the subset of leaves of $\sk(t)$ that coincides with the roots of the trees in $F$. In particular, $h^{-1}(1) \subseteq f^{-1}(k)$. Thus, we can define $g\in (k+1)\operatorname{-lin}(\sk(t'))$ by setting $g(v) = m$ if $v\in f^{-1}(m)$ and $1\leq m <k$, $g(v) = k$ if $v\in h^{-1}(1)$, and $g(v) = k+1$ if $v\in h^{-1}(2)$.
    \par Conversely, let $t'\in \ST(n)$ and $g\in (k+1)\operatorname{-lin}(\sk(t'))$. Then, we can consider:
    \begin{itemize}
        \item $F$ to be the forest of Schröder trees given by the subtrees generated by the internal vertices $g^{-1}(k)\cup g^{-1}(k+1)$ as well as their children;
        \item $h:\sk(F)\to [2]$ to be the 2-linearization given by the standardization of the restriction of $g$ to $g^{-1}(k)\cup g^{-1}(k+1)$;
        \item $t$ to be the Schröder tree with $n+1$ leaves obtained from $t'$ and $F$ as follows: if $t_v$ is a Schröder tree in $F$ with $j_v$ leaves, we replace $t_v$ in $t'$ by a corolla with $j_v$ leaves;
        \item $f:\sk(t)\to [k]$ to be the $k$-linearization given as follows: $f(v) = g(v)$ if $g(v) < k$, and $f(v) = k$ if $g(v)\geq k$.
    \end{itemize}
    In particular, notice that $f$ is a $k$-linearization and $f^{-1}(k)$ are the leaves of $\sk(t)$ that coincide with the roots of the trees in $F$. Finally, observe that the above construction provides a bijection so that we can write
    \begin{eqnarray*}
    \overline{\Delta}^{[k+1]} &=& \sum_{t\in\ST(n)}\sum_{g\in (k+1)\operatorname{-lin}(t)} c_1^g\otimes \cdots c_k^g\otimes c_{k+1}^g
    \\ &=& \sum_{t\in\ST(n)}\sum_{g\in (k+1)\operatorname{-lin}(t)} c(w,t,g),
    \end{eqnarray*}
    as we wanted to prove.
\end{proof}

\subsection{Ascent-free linearizations} Our final step is to describe which $k$-linearizations will contribute to Takeuchi's formula for the antipode in $\tT(\tT_+(V))$. More precisely, starting from a word $w=a_1\cdots a_n \in \tT_+(V)$, we write
\begin{equation}
    w_{(t,f)} := m^{[k]}\circ c(w,t,f) = w_{C_1}|w_{C_2}|\cdots|w_{C_{i(t)}}, 
\end{equation}
for any $t\in \ST(n)$ and $f\in k\operatorname{-lin}(\sk(t))$. Here,  $\pi(t,\overline{f}) = (C_1,\ldots,C_{i(t)})$ is the monotone partition associated to the pair $(t,\overline{f})$ with $\overline{f}$ is as defined in \cref{not:extension}. It is readily to see that $w_{(t,f)} = w_{(t,\overline{f})}$, i.e.~$w_{(t,f)}$ only depends of $f$ via $\overline{f}$. We can also see that in Takeuchi's formula, we will have cancellations given by different $k$-linearizations $f$ that have the same $i(t)$-linearization $\overline{f}$. More precisely, we have
\begin{eqnarray}
 \nonumber   S(w) &=& \sum_{k\geq1} (-1)^k  \sum_{t\in \ST(n)}\sum_{f\in k\operatorname{-lin}(\sk(t))} w_{(t,f)}
    \\ &=& \sum_{t\in \ST(n)}\sum_{g\in i(t)\operatorname{-lin}(\sk(t))} w_{(t,g)}\left(\sum_{k\geq 1}\sum_{\substack{f\in k\operatorname{-lin}(\sk(t))\\ \overline{f}= g}} (-1)^k \right). \label{eq:auxascent}
\end{eqnarray}

In order to analyze which $i(t)$-linearizations will have a non-zero contribution to the above sum, we introduce the following notion.

\begin{definition}
    Let $t$ be a Schröder tree and  $(v_1,\ldots,v_{i(t)})$ be the sequence of internal vertices of $t$ ordered increasingly according to the planar order. Also, let $f\in k\operatorname{-lin}(\sk(t))$ as well as its associated $\overline{f}\in i(t)\operatorname{-lin}(\sk(t))$.  For $1\leq j< i(t)$, we say that $f$ has an \textit{ascent on j} if $v_j$ is not the parent of $v_{j+1}$ and $\overline{f}(v_j) < \overline{f}(v_{j+1})$. The quantity
    $$m(t,f) := |\{1\leq j< i(t)\,:\,f \mbox{ has an ascent on }j\}|$$
    is called the \textit{ascent number of t}.
\end{definition}

We notice that the definition of the ascent number  $m(t,f)$ only depends on $f$ via $\overline{f}$ so that $m(t,f) = m(t,\overline{f})$. In addition, for any Schröder tree $t$ we can construct a $i(t)$-linearization $f$ such that $m(t,f)=0$. Indeed, we can take the labelling induced by a mirrored pre-order traversal on $\sk(t)$: if $\sk(t) = B_+(t_1,\ldots,t_m)$, we visit the root of $\sk(t)$, then orderly visit $t_m,t_{m-1},\ldots,t_1$ by mirrored pre-order traversal. It is not difficult to show that there is no other $f$ such that $m(t,f)=0$, which motivates the following definition.

\begin{definition}
    Let $t$ be a Schröder tree. We say that $g\in i(t)\operatorname{-lin}(\sk(t))$ is \textit{ascent-free} if $m(t,g)=0$. The unique $f_t^{\mathrm{op}}\in i(t)\operatorname{-lin}(\sk(t))$ such that $m(t,f_t^{\mathrm{op}})=0$ is called the \textit{ascent-free linearization of $t$}.
\end{definition}

It turns out that the only $g\in i(t)\operatorname{-lin}(\sk(t))$ that have a non-zero contribution in \eqref{eq:auxascent} are those that are ascent-free. More precisely, we have the following lemma.

\begin{lemma}
    Let $t$ be a Schröder tree and consider $g\in i(t)\operatorname{-lin}(\sk(t))$. Then
    \begin{equation}
    \label{eq:lemmaCancel}
        \sum_{k\geq 1}\sum_{\substack{f\in k\operatorname{-lin}(\sk(t))\\ \overline{f}= g}} (-1)^k  = \left\{\begin{tabular}{cl}
            $(-1)^{i(t)}$ & if $g$ is ascent-free, \\[0.15cm]
            $0$ & otherwise.        \end{tabular}\right.
    \end{equation}
\end{lemma}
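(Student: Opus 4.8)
The plan is to evaluate the inner sum in \eqref{eq:lemmaCancel} by organizing, for a fixed $g$, all $k$-linearizations $f$ with $\overline{f}=g$ according to how $f$ coarsens $g$, and then exhibiting a product (telescoping) structure that forces the cancellation. Throughout I work with the monotone partition $\pi(t,g)=(C_1,\dots,C_{i(t)})$ attached to $g$, and I list the internal vertices of $t$ as $u_1,\dots,u_{i(t)}$ in the order dictated by $g$, so that $u_a$ carries the block $C_a$. I write $<_{\mathrm{pl}}$ for the planar order on $\operatorname{Int}(t)$ of \cref{rmk:naturaltotal}.

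First I would describe exactly which $f$ satisfy $\overline{f}=g$. By the construction of $\overline{f}$ in \cref{not:extension}, the fibres of $f$ are read off in increasing $f$-value and internally in planar order to produce $\overline{f}$; hence $\overline{f}=g$ holds iff each fibre of $f$ is a block of \emph{consecutive} vertices $\{u_a,u_{a+1},\dots,u_b\}$ in the $g$-listing whose planar order agrees with the $g$-order, and (since $f$ is strictly order-preserving) each such block is an antichain of $\sk(t)$. Thus an $f$ with $\overline{f}=g$ is precisely a way of inserting cuts into the sequence $u_1,\dots,u_{i(t)}$, i.e.\ a choice of a set $M\subseteq\{1,\dots,i(t)-1\}$ of gaps to \emph{merge} (merging gap $a$ means putting $u_a,u_{a+1}$ in a common fibre), and the number of fibres is $k=i(t)-|M|$.

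The key structural step is that admissibility of a merge is \emph{local and independent across gaps}. Call a gap $a$ \emph{admissible} when $u_a$ and $u_{a+1}$ are incomparable in $\sk(t)$ and $u_a<_{\mathrm{pl}}u_{a+1}$. I would prove that a consecutive block $\{u_a,\dots,u_b\}$ is an admissible fibre (an antichain whose planar order matches the $g$-order) iff every interior gap $a,a+1,\dots,b-1$ is admissible. The forward direction is immediate. For the converse, planar-monotonicity $u_a<_{\mathrm{pl}}\cdots<_{\mathrm{pl}}u_b$ follows by transitivity; and if two vertices $u_c<u_d$ of the block were comparable, say $u_c$ an ancestor of $u_d$, then---using that in a rooted planar tree the subtree rooted at $u_c$ occupies a \emph{contiguous} planar interval beginning at $u_c$---the chain $u_c<_{\mathrm{pl}}u_{c+1}<_{\mathrm{pl}}\cdots<_{\mathrm{pl}}u_d$ would force $u_{c+1}$ into that subtree, giving $u_c<u_{c+1}$ and contradicting admissibility of gap $c$. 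Consequently the valid merge-sets are exactly the subsets $M$ of the set $\mathrm{Adm}$ of admissible gaps, with no further constraint.

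With this in hand the computation is immediate:
\[
\sum_{k\ge1}\sum_{\substack{f\in k\operatorname{-lin}(\sk(t))\\ \overline{f}=g}}(-1)^k
=\sum_{M\subseteq \mathrm{Adm}}(-1)^{\,i(t)-|M|}
=(-1)^{i(t)}\sum_{M\subseteq\mathrm{Adm}}(-1)^{|M|}
=(-1)^{i(t)}(1-1)^{|\mathrm{Adm}|},
\]
which equals $(-1)^{i(t)}$ when $\mathrm{Adm}=\emptyset$ and $0$ otherwise. To finish I would identify $\mathrm{Adm}=\emptyset$ with ascent-freeness: a gap $a$ is admissible exactly when $(u_a,u_{a+1})$ is a $g$-consecutive incomparable pair with $u_a<_{\mathrm{pl}}u_{a+1}$, and one checks this is the configuration measured by $m(t,g)$, so $\mathrm{Adm}=\emptyset \iff m(t,g)=0$. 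I expect the independence lemma of the third paragraph---deducing the global antichain property from consecutive incomparabilities via the planar-interval structure of subtrees---to be the main obstacle; the one further point requiring care is the last identification, since it compares adjacencies in the planar order with adjacencies in the order $\overline{f}$, while the telescoping sign sum itself is routine.
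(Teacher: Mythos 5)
Your argument is correct, and it takes a genuinely different route from the paper's. The paper proceeds by induction on $i(t)$: it peels off the vertex $v_x$ with $g(v_x)=i(t)$ (necessarily a leaf of $\sk(t)$), relates the $f$'s with $\overline f=g$ to linearizations of the smaller tree $t'=t\setminus c$, and splits into two cases according to whether $g$ has an ascent at position $i(t)-1$; in one case the signs flip coherently, in the other each $f'$ lifts to two $f$'s of opposite sign. You instead evaluate the inner sum in closed form: the identification of $\{f:\overline f=g\}$ with merge-sets $M\subseteq\mathrm{Adm}(g)$, justified by the contiguity of subtrees in the preorder (planar) total order, gives $\sum(-1)^k=(-1)^{i(t)}(1-1)^{|\mathrm{Adm}(g)|}$ in one stroke. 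This avoids the case analysis, and as a bonus pinpoints the exact set of ``defects'' of $g$. The independence lemma you single out is indeed the crux, and your contiguity argument for it is sound.

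The step you defer, identifying $\mathrm{Adm}(g)=\emptyset$ with $m(t,g)=0$, deserves a genuine warning: it cannot be closed against the paper's definition of ascent as literally typeset. That definition lists the internal vertices $(v_1,\dots,v_{i(t)})$ in the \emph{planar} order and declares an ascent at $j$ when $v_j$ is not the parent of $v_{j+1}$ and $\overline f(v_j)<\overline f(v_{j+1})$, whereas your admissible gaps are indexed by pairs consecutive in the $\overline f$-order. These do not agree. Take $\sk(t)$ with root $r$, left child $c$ having one child $d$, and right child $e$, so the planar order is $(r,c,d,e)$. The linearization $g(r)=1$, $g(c)=2$, $g(e)=3$, $g(d)=4$ has $m(t,g)=0$ under the literal definition (the only non-parent planar-consecutive pair is $(d,e)$, and $g(d)=4>3=g(e)$), yet $\mathrm{Adm}(g)=\{2\}$ because $(c,e)$ is an incomparable pair with $c$ to the left of $e$; correspondingly the sum is $(-1)^4+(-1)^3=0$, not $(-1)^{i(t)}$. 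So your $\mathrm{Adm}$ is the correct notion and the literal $m(t,g)$ is not. The paper's own proof in fact imposes the condition on the pair $\bigl(g^{-1}(i(t)-1),\,g^{-1}(i(t))\bigr)$, i.e., the $\overline f$-consecutive reading, which coincides verbatim with your admissibility; with that intended definition your last identification is a tautology and your proof is complete.
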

\begin{proof}
    We will prove the lemma by induction on $i(t)$, the number of internal vertices of $t$. The case $i(t)=1$ is trivial since the only $1$-linearization that exists is ascent-free. \par Now, assume that \eqref{eq:lemmaCancel} is valid for Schröder trees with at most $m$ internal vertices, and take a Schröder tree $t$ with $i(t) = m+1$. The idea is to describe the set of $k$-linearizations of $\sk(t)$ in terms of a smaller tree. More precisely, let $v_x$ be the internal vertex of $t$ such that $g(v_x) = i(t)$. Since $g$ is order-preserving, we have that $v_x$ is a leaf of $\sk(t)$, so that if $c$ stands for the corolla determined by $v_x$ and their children in $t$, we have that $t' = t\backslash c$ is a Schröder tree (replacing the root of $c$ by a leaf) with $i(t') = m$. In addition, $g$ restricted to $\operatorname{Int}(t')$, denoted by $g'$, is also a $i(t')$-linearization of $\sk(t')$. We then have the following two cases:
    \vspace{.1in}
    \par i) \textit{$g$ does not have an ascent on $i(t') = i(t)-1$}. In this case, for any $f$ $k$-linearization such that $\overline{f}=g$, we can equivalently find a $(k-1)$-linearization such that $\overline{f'} = g'$ by taking the restriction of $f$ to $\operatorname{Int}(t')=\operatorname{Vert}(\sk(t'))$. Conversely, given $f'\in (k-1)\operatorname{-lin}(\sk(t'))$ such that $\overline{f'}=g'$, we can find $f\in k\operatorname{-lin}(\sk(t))$ such that $\overline{f}=g$ by defining $f(v_x) = k$. Observe that we cannot define $f(v_x) = k-1$. Otherwise, if $v_y$ is the internal vertex of $t'$ such that $g'(v_y) = i(t')$, then $f'(v_y) = k-1$. Since $f'$ is strictly order-preserving, we have that $v_y$ is not the parent of $v_x$. In addition, since $g$ does not have an ascent on $i(t)-1$, then $v_x < v_y$ in the planar order on $\operatorname{Int}(t)$. However, $f(v_x) = f(v_y)$ would imply that $g(v_x)<g(v_y)$, which contradicts the fact that $v_x$ is maximal. Therefore, we have
    \begin{eqnarray*}
         \sum_{k\geq 1}\sum_{\substack{f\in k\operatorname{-lin}(\sk(t))\\ \overline{f}= g}} (-1)^k &=& -\sum_{k\geq 1}\sum_{\substack{f'\in k\operatorname{-lin}(\sk(t'))\\ \overline{f'}= g'}} (-1)^{k}  
         \\ &=& \left\{\begin{tabular}{cl}
            $-(-1)^{i(t')}$ & if $g'$ is ascent-free, \\[0.15cm]
            $-0$ & otherwise.        \end{tabular}\right.,
            \\ &=& \left\{\begin{tabular}{cl}
            $(-1)^{i(t)}$ & if $g$ is ascent-free, \\[0.15cm]
            $0$ & otherwise.        \end{tabular}\right.,
    \end{eqnarray*}
    where we used the induction hypothesis in the second equality, and in the last equality, we used that if $g$ does not have an ascent on $i(t)-1$, then $g$ is ascent-free if and only if $g'$ is ascent-free.
    \vspace{.1in}
    \par ii) \textit{g has an ascent on $i(t')$}. In this case, we have that $m(t,g)>0$ so that $g$ is not ascent-free. In a similar way to the previous case, an element $f'\in (k-1)\operatorname{-lin}(\sk(t'))$ produces the following two maps:
    \begin{enumerate}
        \item $f_1\in k\operatorname{-lin}(\sk(t))$ by setting $f_1(v_x) = k$ and $f_1(v) = f'(v)$ for any $v\in \operatorname{Vert}(\sk(t'))$; 
        \item $f_2\in (k-1)\operatorname{-lin}(\sk(t))$ by setting $f_2(v_x) = k-1$ and $f_2(v) = f'(v)$ for any $v\in \operatorname{Vert}(\sk(t'))$.
    \end{enumerate}
    Unlike the previous case, the fact that $g$ has an ascent on $i(t)-1$ implies that $f_2$ in case (2) above is well-defined. Conversely, any $f\in k\operatorname{-lin}(\sk(t))$ can be obtained in a unique way from $f' = f|_{\operatorname{Int}(t')}$ as $f_1$ or $f_2$ depending if $f(v_y) = f(v_x)$ or $f(v_y)<f(v_x)$ respectively, where $v_y$ is the internal vertex of $t'$ such that $g'(v_y) = i(t')$. Thus we have
    \begin{eqnarray*}
         \sum_{k\geq 1}\sum_{\substack{f\in k\operatorname{-lin}(\sk(t))\\ \overline{f}= g}} (-1)^k &=& \sum_{k\geq 1}\left( \sum_{\substack{f'\in (k-1)\operatorname{-lin}(\sk(t'))\\ \overline{f'}= g'}} (-1)^{k}  + \sum_{\substack{f'\in k\operatorname{-lin}(\sk(t'))\\ \overline{f'}= g'}} (-1)^{k} \right) 
         \\ &=& \left\{\begin{tabular}{cl}
            $-(-1)^{i(t')} + (-1)^{i(t')}$ & if $g'$ is ascent-free, \\[0.15cm]
            $-0$ & otherwise.        \end{tabular}\right.
            \\ &=& 0,
    \end{eqnarray*}
    where we used the induction hypothesis in the second equality. This completes the induction and therefore the lemma is proved.
\end{proof}

By combining \eqref{eq:auxascent} with the previous lemma, we finally arrive at one of the main results of the paper: a formula for the antipode in the double tensor Hopf algebra indexed by Schröder trees.

\begin{theorem}[Antipode formula in the double tensor Hopf algebra]
\label{thm:antipodetT}
    The action of antipode $S$ in $\tT(\tT_+(V))$ can be written as
    \begin{equation}
    \label{eq:antipodetT}
        S(w) = \sum_{t\in \ST(n)} (-1)^{i(t)}w_t,
    \end{equation}
    for any word $w= a_1\cdots a_n \in \tT_+(V)$, where $w_t: = w_{(t,f_t^{\mathrm{op}})}$ and $f_t^{\mathrm{op}}:\sk(t)\to[i(t)]$ is the ascent-free linearization of $t$.
\end{theorem}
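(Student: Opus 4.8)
The plan is to assemble Takeuchi's formula \eqref{eq:takeuchi} with the iterated reduced coproduct formula of \cref{thm:iteratedDelta} and the cancellation lemma \eqref{eq:lemmaCancel} established just above. Starting from Takeuchi's formula, I would substitute the expression for $\overline{\Delta}^{[k]}(w)$ given in \cref{thm:iteratedDelta} and then apply the iterated multiplication $m^{[k]}$ term by term. Since $m^{[k]}\circ c(w,t,f) = w_{(t,f)}$ by the definition of $w_{(t,f)}$, this produces
\[
S(w) = \sum_{k\geq1}(-1)^k\sum_{t\in\ST(n)}\sum_{f\in k\operatorname{-lin}(\sk(t))} w_{(t,f)}.
\]

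The second step is the regrouping recorded in \eqref{eq:auxascent}. Because $w_{(t,f)}$ depends on $f$ only through the associated order-preserving bijection $\overline{f}$, I would gather, for each fixed tree $t$ and each $g\in i(t)\operatorname{-lin}(\sk(t))$, all $k$-linearizations $f$ (ranging over every $k\geq1$) with $\overline{f}=g$. Factoring out the common tensor $w_{(t,g)}$ leaves the scalar coefficient $\sum_{k\geq1}\sum_{\overline{f}=g}(-1)^k$, which is precisely the quantity evaluated by the cancellation lemma \eqref{eq:lemmaCancel}: it equals $(-1)^{i(t)}$ when $g$ is ascent-free and $0$ otherwise.

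The final step uses the uniqueness of the ascent-free linearization: for each Schröder tree $t$ there is exactly one ascent-free element of $i(t)\operatorname{-lin}(\sk(t))$, namely $f_t^{\mathrm{op}}$. Consequently the inner sum over $g$ collapses to its single surviving term, and setting $w_t := w_{(t,f_t^{\mathrm{op}})}$ yields
\[
S(w) = \sum_{t\in\ST(n)}(-1)^{i(t)} w_t,
\]
as claimed.

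The substantive combinatorial content — the sign-reversing cancellation that annihilates every non-ascent-free linearization — has already been carried out in the preceding lemma, so this argument is essentially a bookkeeping assembly rather than the crux of the matter. The only point that genuinely needs care is to verify that $m^{[k]}\circ c(w,t,f)$ reproduces $w_{(t,f)}$ with the correct left-to-right bar-ordering of the blocks; this is immediate from the monotone-partition description $\pi(t,\overline{f}) = (C_1,\ldots,C_{i(t)})$ underlying the definition of $c(w,t,f)$, since concatenating its tensor factors returns exactly $w_{C_1}|\cdots|w_{C_{i(t)}}$.
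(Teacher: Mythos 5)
Your proposal is correct and follows exactly the paper's own route: Takeuchi's formula combined with the Schröder-tree expansion of $\overline{\Delta}^{[k]}$ from \cref{thm:iteratedDelta}, the regrouping by $\overline{f}$ as in \eqref{eq:auxascent}, and the cancellation lemma \eqref{eq:lemmaCancel} together with the uniqueness of the ascent-free linearization. Nothing is missing.
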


As an immediate consequence of the above theorem, we obtain a formula for the inverse of a character on $\tT(\tT_+(V))$ in terms of Schröder trees.

\begin{corollary}
\label{cor:PhiInv}
    For a character $\Phi$ on $\tT(\tT_+(V))$, we have
    \begin{equation}
        \Phi^{*-1}(w) = (\Phi\circ S)(w) = \sum_{t\in \ST(n)}(-1)^{i(t)} \Phi_t(w),
    \end{equation}
    for any word $w=a_1\cdots a_n\in 
    \tT_+(V),$ where $\Phi_t(w):= \Phi(w_t) = \prod\limits_{B\in \pi(t)} \Phi(w_B) = \varphi_{\pi(t)}(a_1,\ldots,a_n).$
\end{corollary}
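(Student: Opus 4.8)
The plan is to deduce the statement directly from the antipode formula of \cref{thm:antipodetT} together with one standard Hopf-algebraic identity. First I would recall that for any character $\Phi$ on a Hopf algebra with antipode $S$, the convolution inverse is $\Phi^{*-1}=\Phi\circ S$. To see this, write $\Delta(w)=\sum w_{(1)}\otimes w_{(2)}$ in Sweedler notation and use that $\Phi$ is multiplicative for the bar product together with the defining property $\sum w_{(1)}|S(w_{(2)}) = \epsilon(w)\,\uno$ of the antipode:
\[
(\Phi * (\Phi\circ S))(w) = \sum \Phi(w_{(1)})\,\Phi(S(w_{(2)})) = \Phi\Big(\sum w_{(1)}|S(w_{(2)})\Big) = \Phi\big(\epsilon(w)\,\uno\big) = \epsilon(w),
\]
where the last equality uses $\Phi(\uno)=1$. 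The symmetric computation with $S\otimes\id$ gives $(\Phi\circ S)*\Phi = \epsilon$, so $\Phi\circ S$ is the two-sided convolution inverse of $\Phi$; since $\epsilon$ is the unit of the convolution product $*$, this proves $\Phi^{*-1}=\Phi\circ S$.

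Next I would apply $\Phi$ to \eqref{eq:antipodetT}. By linearity of $\Phi$,
\[
\Phi^{*-1}(w) = \Phi(S(w)) = \sum_{t\in\ST(n)} (-1)^{i(t)}\,\Phi(w_t).
\]
It then remains to identify $\Phi(w_t)$. Recall that $w_t = w_{(t,f_t^{\mathrm{op}})} = w_{C_1}|w_{C_2}|\cdots|w_{C_{i(t)}}$, where $(C_1,\ldots,C_{i(t)}) = \pi(t,\overline{f_t^{\mathrm{op}}})$ is the monotone partition carried by $t$ and its ascent-free linearization; in particular $\{C_1,\ldots,C_{i(t)}\}$ is exactly the set of blocks of $\pi(t)$, and $i(t)=|\pi(t)|$. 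Since $\Phi$ is a character, it is multiplicative with respect to the bar product, so
\[
\Phi(w_t) = \prod_{j=1}^{i(t)} \Phi(w_{C_j}) = \prod_{B\in\pi(t)} \Phi(w_B),
\]
the reordering of factors being harmless as they are complex numbers. Finally, by the definition of $\Phi$ in \eqref{eq:character} together with the definition of $\varphi_{\pi(t)}$ as the product $\prod_{B\in\pi(t)}\varphi_{|B|}$ of moment functionals over the blocks of $\pi(t)$, the right-hand side is precisely $\varphi_{\pi(t)}(a_1,\ldots,a_n)$; this yields $\Phi_t(w)=\varphi_{\pi(t)}(a_1,\ldots,a_n)$ and completes the proof.

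I do not anticipate a genuine obstacle here: the corollary is essentially a one-line consequence of \cref{thm:antipodetT}. The only points needing care are the standard identity $\Phi^{*-1}=\Phi\circ S$, which relies crucially on $\Phi$ being an algebra morphism rather than an arbitrary linear functional, and the passage of the bar-product factorization of $w_t$ through $\Phi$, which again uses exactly the character property. Both are routine, so the substantive work was already carried out in establishing \cref{thm:antipodetT}.
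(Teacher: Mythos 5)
Your proposal is correct and follows essentially the same route as the paper, whose proof simply states that the result follows from the multiplicativity of $\Phi$ and the antipode formula \eqref{eq:antipodetT}; you merely spell out the standard identity $\Phi^{*-1}=\Phi\circ S$ and the factorization of $\Phi(w_t)$ in more detail.
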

\begin{proof}
    The result follows immediately from the fact that $\Phi$ is multiplicative and \eqref{eq:antipodetT}.
\end{proof}

\section{Applications of the antipode formula in moment-cumulant relations}
\label{sec:applications}

In this final section, we apply the formula for the antipode in the double tensor Hopf algebra provided in \cref{thm:antipodetT} in the context of non-commutative probability theory. First, we recover the alternative presentation, given in \cite{JVNT,AC22}, of the formulae for the free, Boolean and monotone cumulants-to-moments formula in terms of prime, Boolean and general Schröder trees, respectively. We also present a new formula expressing the inverse of the moment map in the double tensor algebra as a signed sum of moments. Finally, we obtain a new formula for the free Wick map in terms of Schröder trees.
%\improvement{Adrián modified this paragraph. Is it now complete?}

\subsection{Free cumulants in terms of Schröder trees}

Let $(\A,\varphi)$ be a non-commutative probability space and consider its free and Boolean cumulants $\{k_n\}_{n\geq1}$ and $\{b_n\}_{n\geq1}$, respectively. By Möbius inversion, the combinatorial moment-cumulant relations \eqref{eq:FreeMC} and \eqref{eq:BoolMC} can be inverted to write cumulants in terms of moments:
\begin{eqnarray}
 \label{eq:FreeCMt}   k_n(a_1,\ldots,a_n) &=& 
    \sum_{\pi\in \mathsf{NC}(n)} \operatorname{M\ddot{o}b}_{\mathsf{NC}_n}(\pi,1_n) \varphi_\pi(a_1,\ldots,a_n),\\
    b_n(a_1,\ldots,a_n) &=& 
    \sum_{\pi\in \mathsf{NCInt}(n)} (-1)^{|\pi|-1} \varphi_\pi(a_1,\ldots,a_n),\label{eq:BoolCMt}
\end{eqnarray}
for any $a_1,\ldots,a_n\in \A$. From the shuffle-algebraic point of view, the above formulas should be encompassed when inverting the respective shuffle fixed-point equations in \eqref{eq:fixedpointeqs}.

\

More precisely, consider the double tensor Hopf algebra $\tT(\tT_+(\A))$ as well as the character $\Phi$ on $\tT(\tT_+(\A))$ extending $\varphi$, as we considered in \eqref{eq:character}. From \cref{thm:link}, the infinitesimal character $\kappa$ given by $\Phi = \epsilon + \kappa\prec \Phi$ agrees with the free cumulant $k_n(a_1,\ldots,a_n)$ when we evaluate it on a word $w=a_1\cdots a_n\in \tT_+(\A).$ The previous equation can be written as $\Phi-\epsilon = \kappa\prec\Phi$. Using the shuffle identities \eqref{eq:shuffle2}, we have
    \begin{eqnarray*}
    \kappa &=& \kappa \prec \epsilon \\&=& \kappa\prec(\Phi*\Phi^{*-1})\\&=& (\kappa\prec\Phi)\prec \Phi^{*-1}\\ &=& (\Phi-\epsilon)\prec \Phi^{*-1}.%\\ &=& (\Phi-\epsilon)\prec \Phi\circ S. 
    \end{eqnarray*} 
If $S$ stands for the antipode in $\tT(\tT_+(\A))$, then \cref{cor:PhiInv} suggests the connection between Schröder trees and the cumulant-moments relations. More precisely, we can directly recover \cite[Thm. 4.2]{JVNT}, which was originally proved by means of non-commutative symmetric functions and an unshuffle bialgebra of decorated Schröder trees. 

\begin{proposition}[{\cite[Thm. 4.2]{JVNT}}]
    \label{thm:JVMT}
    Let $(\calA,\varphi)$ be a non-commutative probability space and $\{k_n\}_{n\geq1}$ be its free cumulants. Then, for any $a_1,\ldots,a_n \in \A$ we have:
    \begin{equation}
    \label{eq:jvmt}
        k_n(a_1,\ldots,a_n) = \sum_{t\in \mathsf{PSch}(n)} (-1)^{i(t)-1}\varphi_{\pi(t)}(a_1,\ldots,a_n).
    \end{equation}
\end{proposition}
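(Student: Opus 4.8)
The plan is to unfold the shuffle expression for the free cumulant character already obtained just above the statement, namely $\kappa = (\Phi-\epsilon)\prec \Phi^{*-1}$, and to read off a sum over prime Schröder trees from \cref{cor:PhiInv}. Recalling that $f\prec g = m_\mathbb{C}\circ(f\otimes g)\circ \Delta_\prec$ with $\Delta_\prec(w) = \sum_{1\in A\subseteq[n]} w_A\otimes w^{(A)}$, I would first write, for a word $w=a_1\cdots a_n$,
\[
\kappa(w) \;=\; \sum_{1\in A\subseteq[n]} (\Phi-\epsilon)(w_A)\,\Phi^{*-1}\bigl(w^{(A)}\bigr)
\;=\; \sum_{1\in A\subseteq[n]} \Phi(w_A)\,\Phi^{*-1}\bigl(w^{(A)}\bigr),
\]
where the second equality uses that $A\ni 1$ is non-empty, so $w_A$ is a non-empty word and $(\Phi-\epsilon)(w_A)=\Phi(w_A)=\varphi_{|A|}(a_1,\dots,a_n\,|\,A)$.

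Next I would use that $\Phi^{*-1}=\Phi\circ S$ is again a character (hence multiplicative, $\mathbb{C}$ being commutative), so that if $K(A)=\{K_1,\dots,K_r\}$ denotes the decomposition of $[n]\setminus A$ into its connected components, then $w^{(A)}=w_{K_1}|\cdots|w_{K_r}$ gives $\Phi^{*-1}(w^{(A)})=\prod_{i=1}^r \Phi^{*-1}(w_{K_i})$. Applying \cref{cor:PhiInv} to each word $w_{K_i}$ yields $\Phi^{*-1}(w_{K_i}) = \sum_{s_i\in \mathsf{Sch}(|K_i|)} (-1)^{i(s_i)}\varphi_{\pi(s_i)}(w_{K_i})$, and expanding the product turns $\kappa(w)$ into a sum indexed by tuples $(A,s_1,\dots,s_r)$ with $1\in A\subseteq[n]$ and $s_i\in\mathsf{Sch}(|K_i|)$.

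The heart of the argument, and the step I expect to be the main obstacle, is the combinatorial reindexing of these tuples by prime Schröder trees. The key observation I would establish is that a Schröder tree $t$ is prime precisely when $1$ belongs to the block of $\pi(t)$ associated to the root, equivalently when the leftmost child of the root is a leaf (since sector $1$ is adjacent to the root exactly in that case). I would then define the bijection sending $(A,s_1,\dots,s_r)$ to the prime Schröder tree $t$ whose root block is $A$ and whose subtrees grafted into the gaps indexed by $K_1,\dots,K_r$ are $s_1,\dots,s_r$; this is inverse to removing the root of a prime tree together with its block. The statistics to verify are $i(t)=1+\sum_i i(s_i)$, whence $(-1)^{i(t)-1}=\prod_i(-1)^{i(s_i)}$, and that $\pi(t)$ consists of the block $A$ together with the (relabelled) blocks of the $s_i$ on each $K_i$, so that multiplicativity of $\varphi_{\pi}$ over blocks gives $\Phi(w_A)\prod_i \varphi_{\pi(s_i)}(w_{K_i}) = \varphi_{\pi(t)}(a_1,\dots,a_n)$.

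Assembling these identities, the reindexed sum becomes $\sum_{t\in\mathsf{PSch}(n)} (-1)^{i(t)-1}\varphi_{\pi(t)}(a_1,\dots,a_n)$, and since $\kappa(w)=k_n(a_1,\dots,a_n)$ by \cref{thm:link}, the proposition follows. The delicate point throughout is checking that the grafting map is a genuine bijection onto $\mathsf{PSch}(n)$ hitting each prime tree exactly once and carrying the component-wise data to the global data, so I would spell out the prime $\Leftrightarrow$ ($1\in$ root block) correspondence and the compatibility of natural labellings under grafting before invoking multiplicativity.
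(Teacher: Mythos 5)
Your proposal is correct and follows essentially the same route as the paper: unfolding $\kappa=(\Phi-\epsilon)\prec\Phi^{*-1}$ via the left half-unshuffle coproduct, applying \cref{cor:PhiInv} to each connected component $w_{K_j}$, and reindexing by the grafting bijection between tuples $(A,t_1,\ldots,t_s)$ with $1\in A$ and prime Schr\"oder trees, which is precisely the content of \cref{lem:AuxFree} in the paper. The sign bookkeeping via $i(t)=1+i(t_1)+\cdots+i(t_s)$ and the multiplicativity argument also match the paper's proof.
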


The proof of the above proposition makes use of the following simple but useful bijection.

\begin{lemma}
\label{lem:AuxFree}
    For every $n \geq 0$, the set of prime Schröder trees $\PST(n)$ is in bijection with the set
    \begin{equation}
        \mathcal{S} :=\big\{(A,t_1,\ldots,t_{|K(A)|})\,:\,1\in A\subseteq[n],\; t_j\in \ST(|K_j|), \,\mbox{ for }1\leq j\leq |K(A)|\big\}.
    \end{equation}
\end{lemma}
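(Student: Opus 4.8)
The plan is to construct the bijection explicitly and to exhibit its inverse, reading off both directions from the top of a prime Schröder tree. First I would define the forward map $\Psi\colon\PST(n)\to\mathcal{S}$ as follows. Given $t\in\PST(n)$, write $t=B_+(c_0,c_1,\dots,c_m)$, where $c_0=\circ$ by primality and $m\geq 1$ (the root of a Schröder tree with an internal vertex has at least two children). Let $A$ be the block of $\pi(t)$ associated to the root of $t$, i.e.\ the set of labels of the sectors adjacent to the root. Since $c_0$ is a leaf, the leftmost sector, labelled $1$ under the natural labelling $\iota_t$, is adjacent to the root, so $1\in A$; thus $A$ satisfies $1\in A\subseteq[n]$. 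I would then set $\Psi(t):=(A,t_1,\dots,t_r)$, where $t_1,\dots,t_r$ are the maximal subtrees of $t$ rooted at the \emph{internal} children of the root, listed from left to right.

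The key verification is that this tuple lies in $\mathcal{S}$, that is, $r=|K(A)|$ and $t_j\in\ST(|K_j|)$. Here I would track leaf counts: if $\ell_i$ denotes the number of leaves of $c_i$, then $\ell_0=1$, and by the recursive definition of the sector order the sector adjacent to the root lying between $c_{j-1}$ and $c_j$ carries the label $\ell_0+\dots+\ell_{j-1}$, while the $\ell_j-1$ sectors internal to $c_j$ carry the consecutive labels in the interval $[\ell_0+\dots+\ell_{j-1}+1,\ \ell_0+\dots+\ell_j-1]$. Consequently $A=\{\ell_0+\dots+\ell_{j-1}:1\leq j\leq m\}$, and the intervals of sectors internal to the children are exactly the maximal runs of $[n]\setminus A$, each bounded on both sides by elements of $A$. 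The nonempty such intervals are precisely the connected components $K_1,\dots,K_r$ of $[n]\setminus A$ ordered by their minima; since sector labels increase from left to right, the $j$-th internal child (left to right) carries exactly the sectors of $K_j$, so it is a Schröder tree with $|K_j|+1$ leaves, hence an element of $\ST(|K_j|)$. The leaf children contribute empty intervals and no component. This shows $\Psi(t)\in\mathcal{S}$.

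For the inverse, given $(A,t_1,\dots,t_r)\in\mathcal{S}$ with $A=\{1=a_1<\dots<a_m\}$, I would reconstruct $t$ by the grafting procedure already used to build $t_A$ before \cref{lem:decoration}: create a root with $m+1$ children $c_0,\dots,c_m$, put $c_0=\circ$, and for $1\leq j\leq m$ inspect the gap $[a_j+1,\ a_{j+1}-1]$ with $a_{m+1}:=n+1$; if the gap is empty set $c_j=\circ$, and otherwise the gap equals some $K_i$, in which case set $c_j=t_i$, which has the correct number $|K_i|+1$ of leaves. The result is a Schröder tree (the root has $m+1\geq 2$ children, and every other internal vertex comes from the $t_i$) whose leftmost child is a leaf, hence an element of $\PST(n)$, and by the leaf-count identity above this assignment undoes $\Psi$ and conversely. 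Therefore $\Psi$ is a bijection. I expect the only delicate point to be the bookkeeping in the second paragraph, namely confirming that the sectors internal to the child subtrees align exactly with the components $K(A)$ under the natural labelling; I would therefore state the identity $A=\{\ell_0+\dots+\ell_{j-1}\}$ carefully. Finally I would remark that $n=0$ is vacuous, since then both $\PST(0)$ and $\mathcal{S}$ are empty.
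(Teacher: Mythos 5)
Your proposal is correct and follows essentially the same route as the paper: extract the root block $A$ (noting $1\in A$ by primality) together with the subtrees hanging off the internal children of the root, and invert by grafting the $t_j$ onto a corolla with $|A|+1$ leaves. The only difference is that you spell out the sector-label bookkeeping (the identity $A=\{\ell_0+\cdots+\ell_{j-1}\}$ and the matching of internal sectors with the components $K_j$) that the paper leaves as ``clear,'' which is a welcome addition rather than a departure.
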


\begin{proof}
First, we start with a prime Schröder tree $t\in \mathsf{PSch}(n)$ and consider its associated non-crossing partition $\pi(t) \in \mathsf{NC}(n)$. We take $A$ to be the block in $\pi(t)$ associated to the root of $t$. Since $t$ is prime, we have that $1\in A$. Moreover, if $K(A) = \{K_1,\ldots, K_s\}$ is the decomposition of $[n]\backslash A$ into its connected components, then for each $1\leq j\leq s$ we define $t_j\in \ST(|K_j|)$ to be the Schröder sub-tree of $t$ determined by the internal vertices of $t$ such that the union of their associated blocks in $\pi(t)$ is $K_j$, as well as all the descendants of such vertices. In this way, it is clear that $(A,t_1,\ldots,t_s)\in\mathcal{S}$. Conversely, take $(A,t_1,\ldots,t_s)\in\mathcal{S}$. For every $1\leq j\leq s$, we consider $\pi_{K_j}(t_j)$ to be the non-crossing partition in $\mathsf{NC}(K_j)$ obtained by applying the unique increasing bijection from $[|K_j|]$ to $K_j$ to  the blocks of $\pi(t_j)$. Then, we construct $t\in \PST(n)$ by grafting $t_1,\ldots,t_s$ to a corolla with $|A|+1$ leaves in such a way that $\pi(t) = \{A\}\sqcup \pi_{K_1}(t_1)\sqcup\cdots \sqcup \pi_{K_s}(t_s)$. Since $1\in A$, we have that $t$ is indeed prime. It is easy to see that both constructions are invertible to each other, so that we have a bijection between $\PST(n)$ and $\mathcal{S}$.
\end{proof}
%\improvement{Incluir dibujo relacionado al lema}
\begin{proof}[Proof of \cref{thm:JVMT}]
    Consider the double tensor Hopf algebra $\tT(\tT_+(\A))$ as well as the character $\Phi$ on $\tT(\tT_+(\A))$ extending $\varphi$. We also consider $\kappa$ to be the infinitesimal character such that $\Phi = \mathcal{E}_\prec(\kappa)$, or equivalently, $\kappa = (\Phi-\epsilon)\prec \Phi^{*-1}$. Let $w=a_1\cdots a_n\in\A^{\otimes n}$. Using the definition of the left half-shuffle product and \cref{cor:PhiInv}, we have
    \begin{eqnarray*}
        \kappa(w) &=& \sum_{1 \in A\subseteq[n]} \Phi(w_A)(\Phi\circ S)(w_{K_1}|\cdots |w_{K_s})
        \\ &=& \sum_{1 \in A\subseteq[n]} \Phi(w_A)\prod_{j=1}^s \;\;\sum_{t_j\in \ST(|K_j|)} (-1)^{i(t_j)}\Phi_{t_i}(w_{K_i}),
    \end{eqnarray*}
Finally, by applying \cref{lem:AuxFree} to construct $t\in \PST(n)$ of a given $(A,t_1,\ldots,t_s)$ and noticing that $i(t) = 1 + i(t_1)+\cdots +i(t_s)$, we obtain
    $$\kappa(w) = \sum_{t\in \mathsf{PSch}(n)} (-1)^{i(t)-1} \Phi_t(w),$$
    where we can conclude by recalling that $ \Phi_t(w)=\varphi_{\pi(t)}(a_1,\ldots,a_n)$.
\end{proof}

\subsection{Boolean cumulants in terms of Schröder trees}

The authors of \cite{JVNT} proved that \eqref{eq:jvmt} implies the free cumulant-moment formula \eqref{eq:FreeCMt}, so that the expansion in terms of Schröder trees is finer than the expansion in terms of non-crossing partitions. The corresponding formula for the Boolean case was approached in \cite{AC22} by similar methods that in the free case. Unlike the free case, the expansion in terms of Schröder trees for the Boolean cumulant-moment formula is equivalent to the expansion in terms of interval partitions. For the sake of completeness, we present and prove the next result which exhibits the mentioned relation between Boolean cumulants and Schröder trees, analogously to \cref{thm:JVMT}. 

\begin{proposition}[{\cite[Prop. 6.3]{AC22}}]
\label{thm:AC22}
Let $(\calA,\varphi)$ be a non-commutative probability space and $\{b_n\}_{n\geq1}$ be its Boolean cumulants. Then, for any $a_1,\ldots,a_n\in\A$ we have:
\begin{equation}
    b_n(a_1,\ldots,a_n) = \sum_{t\in \mathsf{BSch}(n)} (-1)^{i(t)-1}\varphi_{\pi(t)}(a_1,\ldots,a_n).
\end{equation}
\end{proposition}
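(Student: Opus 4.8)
The plan is to reduce the statement to the classical Boolean cumulant--moment inversion \eqref{eq:BoolCMt} by means of the bijection between Boolean Schröder trees and interval partitions recorded at the end of \cref{sec:preliminaries}. By \cref{thm:link}, the infinitesimal character $\beta$ attached to $\Phi$ through the fixed-point equation $\Phi=\epsilon+\Phi\succ\beta$ in \eqref{eq:fixedpointeqs} satisfies $\beta(w)=b_n(a_1,\ldots,a_n)$ for $w=a_1\cdots a_n$, so it is enough to show that $\sum_{t\in\mathsf{BSch}(n)}(-1)^{i(t)-1}\varphi_{\pi(t)}(a_1,\ldots,a_n)$ coincides with the right-hand side of \eqref{eq:BoolCMt}. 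First I would note that for every Schröder tree one has $i(t)=|\pi(t)|$, since \cref{def:pit} assigns exactly one block to each internal vertex. Then, using that $t\mapsto\pi(t)$ restricts to a bijection from $\mathsf{BSch}(n)$ onto $\mathsf{NCInt}(n)$, I would reindex the sum over Boolean Schröder trees as a sum over interval partitions, turning the summand $(-1)^{i(t)-1}\varphi_{\pi(t)}$ into $(-1)^{|\pi|-1}\varphi_{\pi}$; this is precisely \eqref{eq:BoolCMt}, and the proposition follows.

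To make the argument parallel to the proof of \cref{thm:JVMT}, I would alternatively solve the Boolean fixed-point equation directly. From $\Phi-\epsilon=\Phi\succ\beta$, applying $\Phi^{*-1}\succ(-)$ and using the shuffle identities \eqref{eq:shuffle2} together with $(\Phi^{*-1}*\Phi)\succ\beta=\epsilon\succ\beta=\beta$ gives $\beta=\Phi^{*-1}\succ(\Phi-\epsilon)$. Evaluating on $w=a_1\cdots a_n$ through $\Delta_\succ$ and using multiplicativity of $\Phi$ yields
\[
\beta(w)=\sum_{\substack{A\subseteq[n]\\ 1\notin A}}\Phi^{*-1}(w_A)\,\Phi\big(w^{(A)}\big)=\sum_{\substack{A\subseteq[n]\\ 1\notin A}}\Phi^{*-1}(w_A)\prod_{K\in K(A)}\Phi(w_K),
\]
after which \cref{cor:PhiInv} expands each factor $\Phi^{*-1}(w_A)$ as a signed sum over Schröder trees on $A$.

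The main obstacle is that, unlike the free case, this shuffle expansion does \emph{not} reorganize into a clean tree-by-tree bijection with $\mathsf{BSch}(n)$ in the style of \cref{lem:AuxFree}: here $\Phi^{*-1}$ is applied to the \emph{single} word $w_A$, so \cref{cor:PhiInv} introduces a sum over \emph{all} Schröder trees on $A$, and these terms collapse onto interval partitions only after genuine cancellation. For example, when $n=3$ and $A=\{2,3\}$ the coefficient $2\varphi(a_2)\varphi(a_3)$ produced by the two binary trees on $\{2,3\}$ must combine with the $A=\{2\}$ contribution $-\varphi(a_1)\varphi(a_2)\varphi(a_3)$ to recover the correct coefficient $+1$ of the all-singleton interval partition $\{\{1\},\{2\},\{3\}\}$. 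For this reason I would not chase a bijection but group the terms by the underlying partition $\rho=\pi_A(s)\sqcup K(A)$ and verify that the signed count of contributing pairs is $(-1)^{|\rho|-1}$ for interval $\rho$ and $0$ otherwise --- which is exactly what the identity $\mathsf{BSch}(n)\cong\mathsf{NCInt}(n)$ encodes far more economically. Hence the interval-partition route of the first paragraph is the cleanest path, while the shuffle computation mainly serves to display the analogy with \cref{thm:JVMT}.
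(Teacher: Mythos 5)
Your proof is correct, but it follows a genuinely different route from the paper's. Your main argument simply observes that $i(t)=|\pi(t)|$ and that $t\mapsto\pi(t)$ restricts to a bijection $\mathsf{BSch}(n)\to\mathsf{NCInt}(n)$ (recorded at the end of \cref{sec:preliminaries}), so the tree sum is a reindexing of the interval-partition inversion \eqref{eq:BoolCMt}; combined with \cref{thm:link} this closes the proof in two lines. This is valid, and the paper itself concedes the point just before the proposition ("the expansion in terms of Schröder trees \ldots is equivalent to the expansion in terms of interval partitions"), but it is explicitly \emph{not} the proof the paper gives: the authors choose, "for the sake of completeness," a self-contained shuffle-theoretic derivation parallel to \cref{thm:JVMT}. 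They expand $\beta=(\Phi\circ S)\succ(\Phi-\epsilon)$ exactly as in your second paragraph, then reindex the double sum via \cref{lemmaforAC22} by pairs $(t',d)$ consisting of a non-prime Schröder tree and a $\{0,1\}$-colouring of the leaves of $\sk(t')$, and finally collapse the colouring sum by the binomial identity $(1-1)^{|\operatorname{Leaf}(\sk(t'))|-1}$, which vanishes unless $\sk(t')$ has a single leaf --- precisely the Boolean Schröder trees. Your third paragraph correctly diagnoses why this route is harder than the free case (here $\Phi^{*-1}$ hits the single word $w_A$, so \emph{all} Schröder trees on $A$ appear and genuine cancellation is required; your $n=3$ bookkeeping is accurate), and the grouping-and-cancellation you describe but decline to carry out is exactly what \cref{lemmaforAC22} organizes. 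The trade-off: your route is shorter but imports the Möbius inversion \eqref{eq:BoolCMt} on the interval-partition lattice as an external input, whereas the paper's route derives the formula directly from the antipode formula \cref{cor:PhiInv} inside the shuffle framework, keeping the free and Boolean cases methodologically parallel.
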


As for the previous section, we present a bijection which will be useful for the proof of the above proposition.

\begin{lemma}\label{lemmaforAC22}
For every $n \geq 0$, there is a bijection between
\begin{equation}
    \mathcal{S}_1:=\{(A,t): \emptyset \neq A \subseteq \{2, \hdots, n\}, \; \;t \in \mathsf{Sch}(|A|) \}
\end{equation}
and the set $\mathcal{S}_2$ of the pairs $(t',d)$ satisfying the following conditions:
\begin{enumerate}[i)]
    \item $t'\in \ST(n)\setminus\PST(n)$;
    \item the block $B\in\pi(t')$ such that $1\in B$ is the block associated to the leftmost leaf of $\sk(t')$;
    \item $d:\operatorname{Leaf}(\sk(t'))\to\{0,1\}$ is a map such that $d(v^\ell_{t'})=1$, where  $\operatorname{Leaf}(\sk(t))$ stands for the set of leaves of $\sk(t)$, and $v_{t'}^\ell$ stands for the leftmost leaf of $\sk(t)$.
\end{enumerate}
\end{lemma}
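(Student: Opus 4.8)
The plan is to exhibit explicit mutually inverse maps between $\mathcal{S}_1$ and $\mathcal{S}_2$, obtained by \emph{grafting} gap-corollas onto a Schröder tree and by the inverse operation of \emph{pruning}.

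First I would record the dictionary between a pair $(A,t)\in\mathcal{S}_1$ and the combinatorics of the gaps of $A$. Writing $A=\{a_1<\cdots<a_j\}\subseteq\{2,\ldots,n\}$ and $K(A)=\{K_1,\ldots,K_r\}$ for the connected components of $[n]\setminus A$, I observe that the $j+1$ leaves of $t$, read from left to right, correspond bijectively to the $j+1$ gap positions of $A$ (before $a_1$, between $a_s$ and $a_{s+1}$, and after $a_j$); each such gap is an interval of $[n]\setminus A$, and the nonempty gaps are precisely the components $K_1,\ldots,K_r$. Since $1\notin A$, the leftmost gap is nonempty and contains $1$. The grafting map $\Theta\colon\mathcal{S}_1\to\mathcal{S}_2$ is then defined as follows: retain every internal vertex of $t$; for each leaf $\ell$ of $t$ whose associated gap is a nonempty component of size $m$, replace $\ell$ by a corolla with $m+1$ leaves and decorate its root by $d=1$; every leaf of $\sk(t)$ none of whose children was replaced is retained and decorated by $d=0$. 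A direct check with the natural labelling shows that the resulting planar tree $t'$ lies in $\mathsf{Sch}(n)$, that its $n$ sectors carry exactly the labels $1,\ldots,n$, and that $\pi(t')$ is obtained by adjoining the interval blocks $K_1,\ldots,K_r$ to the image of $\pi(t)$ under the increasing bijection $[j]\to A$.

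Next I would verify $\Theta(A,t)\in\mathcal{S}_2$. The leaves of $\sk(t')$ are exactly the grafted roots (carrying $d=1$) together with those leaves of $\sk(t)$ none of whose children received a graft (carrying $d=0$), so $d$ is a well-defined map $\operatorname{Leaf}(\sk(t'))\to\{0,1\}$. The graft produced by the leftmost (hence nonempty) gap sits at the leftmost leaf of $t$, so it is the leftmost leaf $v_{t'}^{\ell}$ of $\sk(t')$; it carries $d=1$ and its block contains $1$, which yields conditions (ii) and (iii). Finally, this leftmost graft ensures that the leftmost child of the root of $t'$ is internal, so $t'$ is non-prime, giving (i).

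To produce the inverse I would define the pruning map $\mathcal{S}_2\to\mathcal{S}_1$: given $(t',d)$, delete every leaf of $\sk(t')$ carrying $d=1$ (replacing it and its leaf-children by a single leaf), let $A$ be the set of surviving sector labels, and let $t$ be the pruned tree relabelled by the increasing bijection onto $A$. Conditions (ii) and (iii) force the block containing $1$ to be deleted, so $1\notin A$, while condition (i) guarantees that the root of $t'$ is not a leaf of $\sk(t')$ and is therefore never pruned, so $A\neq\emptyset$; hence $(A,t)\in\mathcal{S}_1$. The one genuinely delicate point, which I expect to be the main obstacle, is to prove that pruning and grafting are mutually inverse. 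For this the key sublemma is that the blocks associated to the deleted vertices are pairwise non-adjacent intervals, so that they coincide with the connected components of $[n]\setminus A$ and occupy the correct gap positions. Each such block is an interval because all children of a leaf of $\sk(t')$ are leaves; and two distinct leaves of $\sk(t')$ are incomparable in the tree poset, so their blocks cannot meet at the single separating leaf that adjacency would require. With this established, deleting the grafts undoes the grafting and regrafting the deleted components undoes the pruning, completing the bijection.
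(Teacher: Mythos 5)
Your proposal is correct and follows essentially the same route as the paper: graft a corolla onto $t$ at each nonempty gap of $A$ (equivalently at each component $K_j$ of $[n]\setminus A$), record the grafted roots via the colouring $d=1$ versus $d=0$ on $\operatorname{Leaf}(\sk(t'))$, and invert by pruning the $d=1$ leaves. The only difference is that you spell out the verification that pruning recovers exactly the connected components of $[n]\setminus A$ (via the observation that blocks of distinct, hence incomparable, leaves of $\sk(t')$ cannot be adjacent intervals), a point the paper leaves implicit with ``the described procedure is reversible.''
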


\begin{proof}
    Let $A\subseteq[n]$ be a non-empty set such that $1\not\in A$, and $t\in \ST(|A|)$. We can construct an element $t'\in\ST(n)$ as follows: let $K(A) = \{K_1,\ldots,K_s\}$ be the decomposition of $[n]\backslash A$ into its connected components. Then, for each $1\leq j\leq s$, let $c_j$ be a corolla with $|K_j|+1$ leaves. Also, let $\pi_A(t)$ be the non-crossing partition in $\mathsf{NC}(A)$ obtained by applying the unique increasing bijection from $[|A|]$ to $A$ to the blocks of $\pi(t)$. Next, construct $t'\in \ST(n)$ by grafting $c_1,\ldots,c_s$ to $t$ in such a way that $\pi(t') = \pi_A(t) \sqcup\{K_1,\ldots,K_s\}$. The fact that $1\not\in A$ implies that $1\in K_1$, so that $t'\in \ST(n)\backslash\PST(n)$, with $K_1$ being the block of $\pi(t')$ associated to the leftmost internal vertex of $t'$, denoted by $v^\ell_{t'}$. Furthermore, we can encode the corollas that are grafted to $t$ in order to obtain $t'$ by considering a colouring function $d:\operatorname{Leaf}(\sk(t'))\to \{0,1\}$. By identifying $\operatorname{Leaf}(\sk(t))$ as a subset of $\operatorname{Leaf}(\sk(t'))$, we set $d(v)=0$ if $v\in \operatorname{Leaf}(\sk(t))$ and $d(v)=1$ if $v\in \operatorname{Leaf}(\sk(t'))\backslash \operatorname{Leaf}(\sk(t)).$ In particular, we have that $d(v^\ell_{t'})=1$. 
    \par The above discussion shows that we have a map from $\mathcal{S}_1$ to $\mathcal{S}_2$. Even more, notice that the described procedure is reversible. This means, given $t'\in\ST(n)\backslash\mathsf{PSch}(n)$ and $d:\operatorname{Leaf}(\sk(t'))\to\{0,1\}$ with $d(v^\ell_{t'})=1$, and such that the block associated to $v^\ell_{t'}$ contains $1$, we can produce a subset $1\not\in A\subseteq [n]$ and a Schröder tree $t\in \ST(|A|)$ by deleting all the corollas given by the leaves $v$ of $\sk(t')$ such that $d(v)=1$, and setting $A$ to be the union of the blocks of $\pi(t')$ associated to the vertices in $\operatorname{Int}(t)\subset\operatorname{Int}(t')$. The facts that $t'$ is not a prime Schröder tree and $d(v^\ell_{t'})=1$ imply that the corolla given by $v^\ell_{t'}$ will be deleted so that $1\not\in A$. The above procedure produces a bijection between $\mathcal{S}_1$ and $\mathcal{S}_2$, as we wanted to show.
\end{proof}

\begin{proof}[Proof of \cref{thm:AC22}]
As in the proof of \cref{thm:JVMT}, let $\Phi$ be the character extending $\varphi$ on the double tensor Hopf algebra $\tT(\tT_+(\A))$. From \cref{thm:link}, we know that Boolean cumulants identify with the infinitesimal character $\beta$ which satisfies the right fixed-point equation $\Phi = \epsilon +  \Phi\succ\beta$. Equivalently, the shuffle identities \eqref{eq:shuffle2} imply that the previous fixed-point equation is equivalent to
\begin{equation*}
    \beta = (\Phi\circ S)\succ (\Phi-\epsilon) . 
\end{equation*} 
Let $w=a_1\cdots a_n\in\A^{\otimes n}$. Using the definition of the right half-shuffle product and \cref{cor:PhiInv}, we have
\begin{eqnarray*}
        \beta(w) &=& \sum_{1 \not\in A\subseteq[n]} (\Phi\circ S)(w_A)\Phi(w_{K_1}|\cdots |w_{K_s})
        \\ &=& \sum_{1 \not\in A\subseteq[n]} \left(\sum_{t\in \ST(|A|)} (-1)^{i(t)}\Phi_t(w_A) \right) \prod_{j=1}^s \Phi(w_{K_j}).
        \\ &=& \Phi(w) + \sum_{\substack{1 \not\in A\subseteq[n]\\A\neq\emptyset}} \left(\sum_{t\in \ST(|A|)} (-1)^{i(t)}\Phi_t(w_A) \right) \prod_{j=1}^s \Phi(w_{K_j}),
\end{eqnarray*}
where the first term in the last equality above follows from considering $A=\emptyset$ in the second equality. By applying the bijection provided by \cref{lemmaforAC22} to the indexing set of the double sum above, we have
    %where $v_\ell$ stands for the leftmost internal vertex of $t'$ and $\operatorname{lf}(\sk(t'))$ stands for the set of leaves of the skeleton of $t'$. In other words, the above set characterizes Schröder trees with $n+1$ leaves that are not prime together with a $2$-coloration of its internal vertices whose children are all leaves. The coloration describes if such internal vertices are corollas that were grafted to $t\in \ST(|A|)$, as described in the construction of $t'$.
  % \par Take $(A,t)$ such that $1\not\in A \subseteq[n]$, $A\neq\emptyset$ and $t\in \ST(|A|)$, and construct $t'$ as previously described. 
   %  take $(T,s)$ such that $1\not\in T\subset [n]$ and $s\in \ST(|T|)$. We label the sectors of $s$ with the elements of $T$ from left to right. Also, we denote by $J_1,\ldots,J_r$ the connected components of $[n]\backslash T$ and for each $1\leq i\leq r$, we decorate a corolla with $i+1$ leaves with the elements of $J_i$. Thus, we construct a Schröder tree $t\in \ST(n)$ by grafting the $r$ corollas decorated $J_1,\ldots,J_r$ to $s$ in a such way that we obtain a decoration of $t$ by the elements of $[n]$ from left to right. \improvement{this should be explained in the preliminaries of Schröder trees} The described grafting implies that $\sk(t)$ can be obtained by grating single-vertex trees to $\sk(s)$. In this way, we can identify $L(\sk(s))$ as a proper subset of $L(\sk(t))$. Hence, we define a colouring $c:L(\sk(t))\to \{0,1\}$ such that $c(v) = 1$ if $v\not\in L(\sk(s))$ and $c(v)$ otherwise. By construction, $1\in J_1$ and thus $c(v_\ell(t)) = 1$.
    \begin{equation}
        \label{eq:auxBoolean}
    \beta(w) = \sum_{\substack{t\in\ST(n)\backslash \mathsf{PSch}(n)\\B_{v_{t}^\ell} \ni 1 }} \Phi_t(w) \left(\sum_{\substack{d:\operatorname{Leaf}(\sk(t))\to\{0,1\} \\d(v^\ell_{t})=1}} (-1)^{i(t) - |d^{-1}(1)|} \right).    
    \end{equation}
    where $v_{t}^\ell$ stands for the leftmost vertex of $\sk(t)$ and $B_{v_t^\ell}$ stands for the block in $\pi(t)$ associated to $v_t^\ell$. By rearranging according to the index $k=|d^{-1}(1)|$ and counting the number of coulorings $d$ that satisfy this condition, we obtain:
    \begin{eqnarray*}
    \sum_{\substack{d:\operatorname{Leaf}(\sk(t))\to\{0,1\} \\d(v^\ell_t)=1}} (-1)^{i(t) - |d^{-1}(1)|}  &=& (-1)^{i(t)-1} \sum_{k=0}^{|\operatorname{Leaf}(\sk(t))|-1} {|\operatorname{Leaf}(\sk(t))|-1\choose k}(-1)^{k} 
    \\ &=& (-1)^{i(t)-1} (1-1)^{|\operatorname{Leaf}(\sk(t))|-1} 
    \\[0.1cm] &=& \left\{\begin{array}{c c l}
    (-1)^{i(t)-1} &\quad &\mbox{if $|\operatorname{Leaf}(\sk(t))|=1$}, \\[0.1cm] 0& \quad&\mbox{otherwise.}
    \end{array}\right.
    \end{eqnarray*}
    Thus, the only Schröder trees $t\in \ST(n)\backslash\mathsf{PSch}(n)$ with $B_{v_t^\ell}\ni 1$ that contribute to the sum in \eqref{eq:auxBoolean} are those such that $|\operatorname{Leaf}(\sk(t))|=1$, and these trees are precisely the elements of $\mathsf{BSch}(n)$. Therefore, by recalling that $\Phi_t(w) = \varphi_{\pi(t)}(a_1,\ldots,a_n)$, we finally obtain
    $$\beta(w) = \sum_{t\in\mathsf{BSch}(n)} (-1)^{i(t)-1} \varphi_{\pi(t)}(a_1,\ldots,a_n),$$
    which completes the proof.
\end{proof}

\subsection{Monotone cumulants in terms of Schröder trees} In the work \cite{AC22}, the authors obtained the monotone cumulant-moment formula via the Hopf algebra of decorated Schröder trees described in \cite{JVNT} and the so-called \textit{Murua coefficients} (\cite{Murua}), which are defined by the expression
$$\omega(t) := \sum_{k=1}^{|t|}\frac{(-1)^{k-1}}{k}\omega_k(t),$$
for any rooted tree $t$, where $\omega_k(t)$ stands for the number of $k$-linearizations of $t$,  i.e.~\linebreak$\omega_k(t) = |k\operatorname{-lin}(t)|$. Besides in the work of Murua, these coefficients have also appeared in \cite[Rem. 12]{Murua} in the context of numerical analysis of PDEs, in \cite{calaque2011two} in the computation of the pre-Lie Magnus expansion in the free pre-Lie algebra on one generator, and recently in \cite{CEFPP22} in the framework of cumulant-cumulant relations in non-commutative probability.

\

We now present a proof of the monotone cumulant-moment formula which is not based on the calculation of the logarithm in the Hopf algebra of Schröder trees but on the Schröder trees-type coproduct formula used to calculate the antipode in $\tT(\tT_+(\A))$ obtained in \cref{thm:iteratedDelta}.

\begin{theorem}[{\cite[Thm. 1.1]{AC22}}]
     Let $(\calA,\varphi)$ be a non-commutative probability space and $\{h_n\}_{n\geq1}$ be the monotone cumulants. Then, for any $a_1,\ldots,a_n\in\A$ we have:
    \begin{equation}
        h_n(a_1,\ldots,a_n) = \sum_{t\in \ST(n)} \omega(\sk(t)) \varphi_{\pi(t)}(a_1,\ldots,a_n).
    \end{equation}
\end{theorem}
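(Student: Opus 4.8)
The plan is to use the third identity in \cref{thm:mainEFP}, namely $\Phi = \exp^*(\rho)$, together with the tree expansion of the iterated reduced coproduct from \cref{thm:iteratedDelta}. By \cref{thm:link}, the infinitesimal character $\rho$ satisfies $\rho(w) = h_n(a_1,\ldots,a_n)$ for a word $w = a_1\cdots a_n \in \tT_+(\A)$, so it suffices to compute $\rho = \log^*(\Phi)$. Since $\tT(\tT_+(\A))$ is graded connected, the element $\Phi-\epsilon$ is locally nilpotent under convolution, so the logarithm series converges on each graded component and we may write
\begin{equation*}
\rho(w) = \log^*(\Phi)(w) = \sum_{k\geq 1} \frac{(-1)^{k-1}}{k}\,(\Phi-\epsilon)^{*k}(w).
\end{equation*}

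The next step is to rewrite each convolution power in terms of the reduced iterated coproduct. Because $\Phi$ is a character, $(\Phi-\epsilon)(\uno)=0$, while $(\Phi-\epsilon)$ agrees with $\Phi$ on every nontrivial word or product of words. Hence, in the expansion $(\Phi-\epsilon)^{*k}(w) = (\Phi-\epsilon)^{\otimes k}\circ \Delta^{[k]}(w)$, only those terms in which all $k$ tensor factors are nonempty survive, and these are precisely the terms retained by the reduced iterated coproduct. Therefore
\begin{equation*}
(\Phi-\epsilon)^{*k}(w) = \Phi^{\otimes k}\circ \overline{\Delta}^{[k]}(w)
= \sum_{t\in \ST(n)}\sum_{f\in k\operatorname{-lin}(\sk(t))} \Phi^{\otimes k}\big(c(w,t,f)\big),
\end{equation*}
where the last equality invokes \cref{thm:iteratedDelta}. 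The key observation is that, since $\Phi$ is multiplicative, applying $\Phi^{\otimes k}$ to the pure tensor $c(w,t,f)$ collapses every tensor factor into a product of values of $\Phi$ on the blocks of $\pi(t)$. Thus $\Phi^{\otimes k}(c(w,t,f)) = \prod_{B\in\pi(t)}\Phi(w_B) = \Phi_t(w) = \varphi_{\pi(t)}(a_1,\ldots,a_n)$, which is \emph{independent of the linearization} $f$. Consequently the inner sum over $f$ merely counts $k$-linearizations of $\sk(t)$, giving $(\Phi-\epsilon)^{*k}(w) = \sum_{t\in\ST(n)}\omega_k(\sk(t))\,\varphi_{\pi(t)}(a_1,\ldots,a_n)$, with $\omega_k(\sk(t)) = |k\operatorname{-lin}(\sk(t))|$.

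Finally, I would substitute this into the logarithm series and interchange the (finite) sums. For fixed $t$, the coefficient of $\varphi_{\pi(t)}(a_1,\ldots,a_n)$ becomes
\begin{equation*}
\sum_{k\geq 1}\frac{(-1)^{k-1}}{k}\,\omega_k(\sk(t)) = \sum_{k=1}^{i(t)}\frac{(-1)^{k-1}}{k}\,\omega_k(\sk(t)) = \omega(\sk(t)),
\end{equation*}
where the truncation at $k=i(t)=|\sk(t)|$ holds because no surjective map from the $i(t)$-element poset $\sk(t)$ onto $[k]$ exists for $k>i(t)$, and the last equality is exactly the definition of the Murua coefficient. Combining with $\rho(w)=h_n(a_1,\ldots,a_n)$ yields the claimed formula. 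I do not expect a serious obstacle here; the only points requiring care are the justification that the logarithm is well defined on each graded piece (conilpotency) and the reduction of $(\Phi-\epsilon)^{*k}$ to $\overline{\Delta}^{[k]}$, after which the independence of $\Phi^{\otimes k}(c(w,t,f))$ from $f$ makes the linearization count appear automatically and reproduces the Murua coefficient with no cancellations to track.
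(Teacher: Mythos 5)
Your proposal is correct and follows essentially the same route as the paper: expand $\rho=\log^*(\Phi)$, reduce $(\Phi-\epsilon)^{*k}$ to $m_{\mathbb{C}}^{[k]}\circ\Phi^{\otimes k}\circ\overline{\Delta}^{[k]}$, apply the Schröder-tree formula for the iterated reduced coproduct, and observe that multiplicativity of $\Phi$ makes each term independent of the linearization so the inner sum yields $\omega_k(\sk(t))$ and hence the Murua coefficient. The only cosmetic difference is that the paper writes the multiplication map $m_{\mathbb{C}}^{[k]}$ explicitly, which you suppress via the identification $\mathbb{C}^{\otimes k}\cong\mathbb{C}$.
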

\begin{proof}
    By \cref{thm:mainEFP}, if $\Phi$ is the character extending $\varphi$ on the double tensor Hopf algebra $\tT(\tT_+(\A))$, then the evaluation of  $w=a_1\cdots a_n\in \calA^{\otimes n}$ on infinitesimal character $\rho = \log^*(\Phi)$ is the monotone cumulant $h_n(a_1\ldots,a_n).$ On the other hand, we have the expansion
    $$\rho = \log^*(\Phi) = \sum_{k\geq1}\frac{(-1)^{k-1}}{k}(\Phi-\epsilon)^{*k}.$$
    Let $w=a_1\cdots a_n\in \A^{\otimes n}$. Since $(\Phi-\epsilon)({\bf{1}}) = 0$, we can use the iterated reduced coproduct to compute its $k$-fold convolution power:
    $$(\Phi-\epsilon)^{*k}(w) = m^{[k]}_{\mathbb{C}}\circ \Phi^{\otimes k}\circ \overline{\Delta}^{[k]}(w),$$
    where $m^{[k]}_{\mathbb{C}}$ stands for the associative product of $k$ complex numbers. Hence by \cref{thm:iteratedDelta} and using that $\Phi$ is an algebra morphism, we have the following expression in terms of Schröder trees:
    \begin{eqnarray*}
        (\Phi-\epsilon)^{*k}(w) &=& \sum_{t\in \ST(n)}\sum_{f\in k\operatorname{-lin}(\sk(t))} m_\mathbb{C}^{[k]}\circ\Phi^{\otimes k}\big(c(w,t,f)\big)
        \\ &=& \sum_{t\in \ST(n)}\sum_{f\in k\operatorname{-lin}(\sk(t))} \Phi_t(w)
        \\ &=&\sum_{t\in \ST(n)}\omega_k(\sk(t)) \Phi_t(w),
    \end{eqnarray*}
    where we write $\Phi_t(w) = \prod_{B\in \pi(t)} \Phi(w_B)$. By replacing the above equation in the expansion for $\log^*(\Phi)$, we obtain:
    \begin{eqnarray*}
        \rho(w) &=& \sum_{k\geq1} \frac{(-1)^{k-1}}{k} \sum_{t\in \ST(n)}\omega_k(\sk(t)) \Phi_t(w)
        \\ &=& \sum_{t\in \ST(n)}
        \left(\sum_{k\geq1} \frac{(-1)^{k-1}}{k}\omega_k(\sk(t))\right)\varphi_{\pi(t)}(a_1,\ldots,a_n) 
        \\ &=& \sum_{t\in \ST(n)}\omega(\sk(t))\varphi_{\pi(t)}(a_1,\ldots,a_n), 
    \end{eqnarray*}
    as we wanted to prove.
\end{proof}

\subsection{A formula for $\Phi^{*-1}$ in terms of non-crossing partitions} Following the approach of the combinatorial formulas for non-commutative cumulants, the next result offers a formula for the evaluation of the inverse of a character on $\tT(\tT_+(\A))$ in terms of non-crossing partitions. 

\begin{proposition}
    Let $(\A,\varphi)$ be a non-commutative probability space, and consider $\Phi$ the character extending $\varphi$ on $\tT(\tT_+(\A))$. Then, for a word $w=a_1\cdots a_n\in \A^{\otimes n}$, we have that
    \begin{equation}
        \Phi^{*-1}(w) = \sum_{\pi\in\NC(n)} \operatorname{M\ddot{o}b}_{\NC(n+1)}(\hat\pi,1_{n+1}) \varphi_\pi(a_1,\ldots,a_n),
    \end{equation}
    where $\hat\pi$ is the non-crossing partition in $\NC(\{0,1,\ldots,n\})\cong\NC(n+1)$ given by $\hat{\pi}:= \big\{\{0\}\big\}\sqcup\pi$.
    \begin{proof}
        By \cref{cor:PhiInv}, we know that
        $$\Phi^{*-1}(w) = (\Phi\circ S)(w) = \sum_{t\in\ST(n)} (-1)^{i(t)} \varphi_{\pi(t)}(a_1,\ldots,a_n).$$
        Since $\varphi_{\pi(t)}$ depends only of the non-crossing partition $\pi(t)$ associated to $t$, we can group the above sum as
        \begin{eqnarray*}
        \Phi^{*-1}(w) &=& \sum_{\pi\in \NC(n)} \sum_{\substack{t\in\ST(n)\\ \pi(t) = \pi}} (-1)^{|\pi(t)|} \varphi_{\pi(t)}(a_1,\ldots,a_n)
        \\ &=& \sum_{\pi\in \NC(n)} |\{t\in\ST(n)\,:\,\pi(t) = \pi\}| (-1)^{|\pi|} \varphi_{\pi}(a_1,\ldots,a_n).
        \end{eqnarray*}
        Hence, given $\pi\in\NC(n)$, we need to count the number of $t\in\ST(n)$ such that $\pi(t) = \pi$. To this end, we recall that there is a two-to-one surjective map $P:\mathsf{PSch}(n+1)\to\ST(n)$ such that, for any $t\in\ST(n)$, $P^{-1}(t)$ contains exactly the following prime Schröder trees:        
{
\begin{figure}[H]
    \centering
    \tikzset{
itria/.style={
  draw,dashed,shape border uses incircle,
  isosceles triangle,shape border rotate=90,yshift=-1.05cm},
rtria/.style={
  draw,dashed,shape border uses incircle,
  isosceles triangle,isosceles triangle apex angle=90,
  shape border rotate=-45,yshift=0.2cm,xshift=0.5cm},
ritria/.style={
  draw,dashed,shape border uses incircle,
  isosceles triangle,isosceles triangle apex angle=110,
  shape border rotate=-55,yshift=0.1cm},
letria/.style={
  draw,dashed,shape border uses incircle,
  isosceles triangle,isosceles triangle apex angle=110,
  shape border rotate=235,yshift=0.45cm, xshift = 0.05cm}
}
$$
\begin{array}{c c c}
\begin{tikzpicture}[sibling distance=1.5cm, level 2/.style={sibling distance =1.5cm}]
\node[circle,draw,fill=black, inner sep = 0.5ex] {}
    child{ node[circle, draw, inner sep = 0.5ex] {} }
    child{node[letria,draw] {t} };  
%\node[draw] at (3,-5) 
\end{tikzpicture} 
& \qquad \qquad \qquad & \begin{tikzpicture}[scale=0.7, sibling distance=2cm, level 2/.style={sibling distance =1.5cm}]
\node[circle,draw,fill=black, inner sep = 0.5ex] {}
    child{ node[circle, draw, inner sep = 0.5ex] {} }
    child{ node[circle, fill = black,draw, inner sep = 0.5ex] {}
                        { node[itria] {$t$} } 
                };
%\node[draw] at (3,-5) 
\end{tikzpicture} 
\\[0.2cm] \mbox{Type 1}& \qquad \qquad \qquad &\mbox{Type 2}
\end{array}
$$
\end{figure}
}
Now, consider $\hat{\pi}\in \NC(\{0,1,\ldots,n\})$ as above defined. Observe that the prime Schröder trees $\hat{t}$ such that $\pi(\hat{t})=\hat{\pi}$ can be only of Type 1, since otherwise, $\hat{t}$ being Type 2 would imply that $0$ belongs to a block of size larger than 1. Then $\hat{t}$ is the grafting of a single-vertex tree and $t\in \ST(n)$, and one can see that $\pi(t) = \pi$. In other words, we have a bijection between $\{t\in\ST(n) \,:\,\pi(t) = \pi\}$ and $\{s\in\mathsf{PSch}(n+1)\,:\,\pi(s) = \hat{\pi}\}$. By \cref{prop:NumberPST}, the latter set contains exactly $\operatorname{Abs}\big(\!\operatorname{M\ddot{o}b}_{\NC(n+1)}(\hat{\pi},1_{n+1})\big)$ 
    \begin{eqnarray*}
        \Phi^{*-1}(w) &=& \sum_{\pi\in \NC(n)}  (-1)^{|\hat{\pi}|-1}\operatorname{Abs}\big(\!\operatorname{M\ddot{o}b}_{\NC(n+1)}(\hat{\pi},1_{n+1})\big) \varphi_{\pi}(a_1,\ldots,a_n)
        \\ &=& \sum_{\pi\in \NC(n)}  \operatorname{M\ddot{o}b}_{\NC(n+1)}(\hat{\pi},1_{n+1}) \varphi_{\pi}(a_1,\ldots,a_n),
        \end{eqnarray*}
as we wanted to show. For a proof of the fact that $(-1)^{|\hat{\pi}|-1}$ is the suitable sign in order to obtain $\operatorname{M\ddot{o}b}_{\NC(n+1)}(\hat{\pi},1_{n+1})$ from its absolute value, the reader can check \cite[Sec. 5]{JVNT} for a relation between Schröder trees and the \textit{Kreweras complement} of a non-crossing partition (see \cite[Def. 9.21]{NSp}).
    \end{proof}
\end{proposition}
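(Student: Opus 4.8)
The plan is to begin from the Schröder-tree expansion of $\Phi^{*-1}$ already supplied by \cref{cor:PhiInv}, namely
$$\Phi^{*-1}(w) = (\Phi\circ S)(w) = \sum_{t\in\ST(n)} (-1)^{i(t)}\,\varphi_{\pi(t)}(a_1,\ldots,a_n),$$
and to reorganize this sum according to the underlying non-crossing partition. Since $\varphi_{\pi(t)}$ depends on $t$ only through $\pi(t)$, and since by \cref{def:pit} the number of internal vertices $i(t)$ coincides with the number of blocks $|\pi(t)|$, collecting terms over $\pi\in\NC(n)$ turns the problem into an enumerative one. First I would record
$$\Phi^{*-1}(w) = \sum_{\pi\in\NC(n)} \big|\{t\in\ST(n):\pi(t)=\pi\}\big|\,(-1)^{|\pi|}\,\varphi_{\pi}(a_1,\ldots,a_n),$$
so that everything reduces to counting the Schröder trees in the fiber $\pi(t)=\pi$.

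The heart of the argument is to identify this fiber count with a Möbius number. The plan is to invoke the two-to-one surjection $P:\PST(n+1)\to\ST(n)$ recalled in \cref{sec:preliminaries}, whose fibers $P^{-1}(t)$ consist of exactly two prime Schröder trees of the two explicit shapes (Type 1 and Type 2) displayed in the statement. The crucial structural point is that building $\hat t\in\PST(n+1)$ from $t\in\ST(n)$ shifts the leaf labels by one, so $\pi(\hat t)$ naturally lives on $\{0,1,\ldots,n\}\cong[n+1]$; and requiring $\pi(\hat t)=\hat\pi=\{\{0\}\}\sqcup\pi$ forces $0$ to lie in a singleton block. Among the two preimage shapes, only Type 1 leaves $0$ isolated, whereas Type 2 merges $0$ into a block of size at least two. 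Hence the Type-1 branch of $P$ restricts to a bijection between $\{t\in\ST(n):\pi(t)=\pi\}$ and $\{s\in\PST(n+1):\pi(s)=\hat\pi\}$.

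With this bijection established, \cref{prop:NumberPST} delivers the count $\big|\{s\in\PST(n+1):\pi(s)=\hat\pi\}\big| = \Abs\big(\Mob_{\NC(n+1)}(\hat\pi,1_{n+1})\big)$, and substituting gives
$$\Phi^{*-1}(w) = \sum_{\pi\in\NC(n)} (-1)^{|\pi|}\,\Abs\big(\Mob_{\NC(n+1)}(\hat\pi,1_{n+1})\big)\,\varphi_{\pi}(a_1,\ldots,a_n).$$
The final task is to match the sign: since $|\hat\pi|=|\pi|+1$, I would rewrite $(-1)^{|\pi|}=(-1)^{|\hat\pi|-1}$ and verify that $(-1)^{|\hat\pi|-1}$ is precisely the factor converting the absolute value into the true value $\Mob_{\NC(n+1)}(\hat\pi,1_{n+1})$.

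I expect this last sign verification to be the main obstacle, since it is not purely formal: one must know that $\Mob_{\NC(n+1)}(\hat\pi,1_{n+1})$ carries the sign $(-1)^{|\hat\pi|-1}$. This can be read off the explicit form of the Möbius function on $\NC$ from \cref{sec:preliminaries}, but the cleanest route is to lean on the relation between prime Schröder trees and the Kreweras complement of a non-crossing partition developed in \cite{JVNT}; that correspondence simultaneously underlies \cref{prop:NumberPST} and pins down the correct sign. The remaining steps — the regrouping and the bijection — are direct and combinatorial.
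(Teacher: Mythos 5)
Your proposal is correct and follows essentially the same route as the paper's own proof: regroup the Schröder-tree expansion from \cref{cor:PhiInv} by the associated non-crossing partition, count the fiber $\{t\in\ST(n):\pi(t)=\pi\}$ via the two-to-one map $P:\PST(n+1)\to\ST(n)$ and the observation that only one of the two preimage types keeps $0$ in a singleton block, apply \cref{prop:NumberPST}, and fix the sign $(-1)^{|\hat\pi|-1}$ through the Kreweras-complement relation of \cite{JVNT}. No substantive differences to report.
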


From the proof of the previous proposition, one can obtain the number of Schröder trees associated to the same non-crossing partition.

\begin{proposition}
    Let $n\geq1$. For any $\pi\in\mathsf{NC}(n)$, we have
    $$|\{t\in \ST(n)\;:\; \pi(t) = \pi\} | = \operatorname{Abs}\big(\!\operatorname{M\ddot{o}b}_{\mathsf{NC}(n+1)}(\hat{\pi},1_{n+1})\big),$$
    where $\hat\pi = \big\{\{0\}\big\}\sqcup \pi\in \NC(\{0,\ldots,n\})\cong\NC(n+1)$.
\end{proposition}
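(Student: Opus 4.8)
The plan is to read off this counting identity directly from the argument already used to establish the preceding formula for $\Phi^{*-1}$, since essentially all the combinatorial work has been done there. The crucial ingredient is the two-to-one surjection $P:\PST(n+1)\to\ST(n)$ together with the explicit description of its fibers into Type~1 and Type~2 prime Schröder trees. First I would isolate from that proof the bijection
$$\{t\in\ST(n):\pi(t)=\pi\}\;\longleftrightarrow\;\{s\in\PST(n+1):\pi(s)=\hat\pi\},$$
where $\hat\pi=\{\{0\}\}\sqcup\pi\in\NC(\{0,1,\ldots,n\})\cong\NC(n+1)$.

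To justify this bijection I would argue as follows. Grafting a single-vertex tree and a given $t\in\ST(n)$ to a common root (the Type~1 construction) produces a prime Schröder tree $\hat t\in\PST(n+1)$ whose leftmost leaf contributes the singleton $\{0\}$, so that $\pi(\hat t)=\{\{0\}\}\sqcup\pi(t)=\hat\pi$ precisely when $\pi(t)=\pi$. Conversely, any $s\in\PST(n+1)$ with $\pi(s)=\hat\pi$ must be of Type~1: a Type~2 tree would place $0$ in the block associated to an internal vertex adjacent to the leftmost sector, forcing $0$ into a block of size larger than one and contradicting that $\{0\}$ is a singleton of $\hat\pi$. Hence the Type~1 construction restricts to the claimed bijection between the two indexing sets.

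With the bijection in hand, the proof concludes by applying \cref{prop:NumberPST} to $n+1$ and $\hat\pi$, which gives
$$|\{s\in\PST(n+1):\pi(s)=\hat\pi\}|=\Abs\big(\Mob_{\NC(n+1)}(\hat\pi,1_{n+1})\big);$$
the left-hand cardinality equals $|\{t\in\ST(n):\pi(t)=\pi\}|$ by the bijection, yielding the asserted equality. I do not expect a genuine obstacle here: the only point requiring care is the verification that no Type~2 tree can realize $\hat\pi$, which is exactly the singleton-block observation already recorded in the previous proof. Everything else is a bookkeeping translation between the fibers of $P$ and the non-crossing partition $\hat\pi$, so the statement is really a corollary of the computation of $\Phi^{*-1}$ rather than an independent result.
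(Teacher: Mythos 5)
Your proposal is correct and follows essentially the same route as the paper: the paper derives this count directly from the proof of the preceding proposition on $\Phi^{*-1}$, using the same Type~1/Type~2 fiber analysis of the two-to-one map $P:\PST(n+1)\to\ST(n)$, the same observation that only Type~1 trees can realize $\hat\pi$ because $\{0\}$ is a singleton block, and the same appeal to \cref{prop:NumberPST}. Nothing is missing.
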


\subsection{Free Wick polynomials in terms of Schröder trees}

As a final application of the antipode formula in $\tT(\tT_+(\A))$, we present a formula that establishes the connection between the free Wick polynomials \eqref{eq:FreeWick} and Schröder trees. To the best of our knowledge, the formula presented in the following theorem represents a new and previously unexplored result.

\begin{theorem}[Free Wick Polynomials]

\label{thm:WickSch}

Let $(\A,\varphi)$ be a non-commutative probability space and consider the double tensor Hopf algebra $\tT(\tT_+(\A))$. If $\Phi$ is the character on $\tT(\tT_+(\A))$ extending $\varphi$, then the free Wick map $W$ \eqref{eq:Wick} of $\Phi$ satisfies, for any word $w=a_1\cdots a_n\in \A^{\otimes n}$:

\begin{eqnarray*}
   W(w) &=& \Phi^{*-1}(w)\uno + \sum_{t\in \ST(n)} (-1)^{i(t)-1}w_{B_{r}} \prod_{\substack{B\in \pi(t)\\B\neq B_r}} \Phi(w_B)
   \\ &=& \sum_{t\in \ST(n)} (-1)^{i(t)-1} \Big(w_{B_r} 
   - \Phi(w_{B_r})\uno \Big)\prod_{\substack{B\in \pi(t)\\B\neq B_r}} \Phi(w_B),
\end{eqnarray*}
where for each $t\in\ST(n)$, $B_r$ stands for the block in $\pi(t)$ associated to the root of $t$.
\end{theorem}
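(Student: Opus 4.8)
The plan is to evaluate the defining recipe $W = (\id \otimes \Phi^{*-1}) \circ \Delta$ directly on a word $w = a_1 \cdots a_n$, exactly as in the proof of the free Wick polynomial proposition earlier, but now substituting the Schröder-tree formula for $\Phi^{*-1}$ from \cref{cor:PhiInv} rather than its Boolean-cumulant expansion. First I would expand the coproduct as $\Delta(w) = \sum_{B \subseteq [n]} w_B \otimes w^{(B)}$, so that
\begin{equation*}
    W(w) = \sum_{B \subseteq [n]} w_B \otimes \Phi^{*-1}\big(w^{(B)}\big) = \sum_{B \subseteq [n]} w_B \cdot \Phi^{*-1}\big(w_{K_1} | \cdots | w_{K_s}\big),
\end{equation*}
where $K(B) = \{K_1, \ldots, K_s\}$ is the decomposition of $[n] \setminus B$ into connected components. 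Since $\Phi^{*-1}$ is multiplicative, each factor $\Phi^{*-1}(w_{K_j})$ expands via \cref{cor:PhiInv} as $\sum_{t_j \in \ST(|K_j|)} (-1)^{i(t_j)} \Phi_{t_j}(w_{K_j})$.

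The key combinatorial step is to reassemble the data $(B, t_1, \ldots, t_s)$ into a single Schröder tree. I would argue that choosing $B \subseteq [n]$ together with Schröder trees $t_j \in \ST(|K_j|)$ for each connected component $K_j$ of the complement is exactly the data needed to graft the $t_j$ onto a corolla whose root sectors are labelled by $B$, producing a Schröder tree $t \in \ST(n)$ with root block $B_r = B$ and $\pi(t) = \{B\} \cup \bigsqcup_j \pi_{K_j}(t_j)$; this is the same grafting bijection already used in \cref{lem:AuxFree}, now without the primality constraint. Under this bijection $i(t) = 1 + i(t_1) + \cdots + i(t_s)$, so $(-1)^{i(t_1) + \cdots + i(t_s)} = (-1)^{i(t)-1}$, and the product of factors $\prod_j \Phi_{t_j}(w_{K_j})$ becomes $\prod_{B \in \pi(t),\, B \neq B_r} \Phi(w_B)$. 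This rewrites the sum over $(B, t_1, \ldots, t_s)$ as a single sum over $t \in \ST(n)$, yielding
\begin{equation*}
    W(w) = \sum_{t \in \ST(n)} (-1)^{i(t)-1} w_{B_r} \prod_{\substack{B \in \pi(t) \\ B \neq B_r}} \Phi(w_B),
\end{equation*}
which after separating off the $B = \emptyset$ term (giving $\Phi^{*-1}(w)\uno$) is the first displayed equality.

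For the second equality I would show that the correction term $\Phi^{*-1}(w)\uno$ can be absorbed into the main sum by writing $w_{B_r} - \Phi(w_{B_r})\uno$ in place of $w_{B_r}$. The point is that $\sum_{t} (-1)^{i(t)-1} \Phi(w_{B_r}) \prod_{B \neq B_r} \Phi(w_B) = \sum_t (-1)^{i(t)-1} \Phi_t(w) = -(\Phi \circ S)(w) = -\Phi^{*-1}(w)$ by \cref{cor:PhiInv}, so subtracting $\Phi(w_{B_r})\uno$ inside each summand exactly cancels the $\Phi^{*-1}(w)\uno$ term.

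The main obstacle I anticipate is verifying that the grafting map is genuinely a bijection and that the block and sign bookkeeping is consistent, in particular that the root block $B_r$ of the assembled tree is precisely $B$ and that distinct choices of $(B, t_1, \ldots, t_s)$ give distinct trees $t$ with no overcounting — but since this mirrors the bijection of \cref{lem:AuxFree} with the primality hypothesis dropped (which corresponds to removing the constraint $1 \in B$), the argument should go through with only minor adaptation. A secondary subtlety is keeping $w_{B_r}$ as an honest tensor-algebra element on the left while the remaining factors are scalars $\Phi(w_B) \in \mathbb{C}$, so that the product lands in $\tT(\tT_+(\A))$ as intended; this is purely formal but worth stating carefully.
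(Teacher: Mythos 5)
Your proposal is correct and follows essentially the same route as the paper: expand $W=(\id\otimes\Phi^{*-1})\circ\Delta$, apply \cref{cor:PhiInv} multiplicatively to each connected component, reassemble $(B,t_1,\ldots,t_s)$ into a single Schröder tree via the grafting bijection of \cref{lem:AuxFree} without the primality constraint, and absorb the $\Phi^{*-1}(w)\uno$ term using $\Phi_t(w)=\Phi(w_{B_r})\prod_{B\neq B_r}\Phi(w_B)$. The only blemish is presentational: your penultimate display omits the $\Phi^{*-1}(w)\uno$ summand that the following sentence then refers to, but the intended bookkeeping of the $B=\emptyset$ term is clear and matches the paper.
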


\begin{proof}
    By using the definition of the free Wick map $W$ in \cref{eq:Wick} and \cref{cor:PhiInv}, we have for any word $w=a_1\cdots a_n\in \A^{\otimes n}$:
    \begin{eqnarray*}
        W(w) &=& \big(\id\otimes \Phi^{*-1}\big)\circ \Delta (w)
        \\ &=& \sum_{A\subseteq [n]} w_A (\Phi\circ S)(w_{K_1}|\cdots|w_{K_s})
        \\ &=& \Phi^{*-1}(w)\uno +  \sum_{\emptyset \neq A\subseteq [n]} w_A \prod_{j=1}^s \sum_{t_j\in \ST(|K_j|)} (-1)^{i(t_j)} \Phi_{t_j}(w_{K_j}).
    \end{eqnarray*}
    Proceeding in a similar way as in the proof of \cref{lem:AuxFree} we have a bijection between $\ST(n)$ and the set
    $$\big\{(A,t_1,\ldots,t_{|K(A)|})\,:
    \, \emptyset\neq A\subseteq[n],\; t_j\in \ST(|K_j|)\,\mbox{ for }1\leq j\leq |K(A)|\big\},$$
    where, given $t\in \ST(n)$, $A$ is the block of $\pi(t)$ associated to the root of $t$, and for each $1\leq j\leq s$, $t_j\in \ST(|K_j|)$ is the Schröder sub-tree of $t$ determined by the internal vertices of $t$ such that the union of their associated blocks in $\pi(t)$ is $K_j$, as well as all the descendants of such vertices.
    \par  Next, given $t\in\ST(n)$ with corresponding tuple $(B_r,t_1,\ldots,t_s)$, we can use the fact that $\Phi$ is multiplicative in order to obtain
    $$\prod_{j=1}^s \Phi_{t_j}(w_{K_j}) = \prod_{j=1}^s \prod_{B\in\pi_{K_j}(t_j)} \Phi(w_{B}) = \prod_{\substack{B\in\pi(t)\\B\neq B_r}} \Phi(w_B),$$
    where $\pi_{K_j}(t_j)$ stands for the element in $\mathsf{NC}(K_j)$ obtained by applying the unique increasing bijection from $[|K_j|]$ to $K_j$ to every block of $\pi(t_j)$. Hence, we can write
    $$W(w) = \Phi^{*-1}(w)\uno + \sum_{t\in \ST(n)} (-1)^{i(t)-1} w_{B_r} \prod_{\substack{B\in\pi(t)\\ B\neq B_r}} \Phi(w_B),$$where we have also used that $i(t) = 1+i(t_1)+\cdots + i(t_s)$. 
    For the second equation, we use \cref{cor:PhiInv} to express $\Phi^{*-1}(w)$ as a sum in terms of Schröder trees and obtain:
    \begin{eqnarray*}
        W(w) &=&  \sum_{t\in \ST(n)} (-1)^{i(t)} \Phi_t(w)\uno + \sum_{t\in \ST(n)} (-1)^{i(t)-1} w_{B_r} \prod_{\substack{B\in\pi(t)\\ B\neq B_r}} \Phi(w_B)
        \\ &=& \sum_{t\in \ST(n)} (-1)^{i(t)-1}\left(w_{B_r} \prod_{\substack{B\in\pi(t)\\ B\neq B_r}} \Phi(w_B) - \Phi_t(w)\uno\right)
        \\ &=& \sum_{t\in \ST(n)} (-1)^{i(t)-1}\left(w_{B_r} \prod_{\substack{B\in\pi(t)\\ B\neq B_r}} \Phi(w_B) - \Phi\big(w_{B_r}\big)\uno \prod_{\substack{B\in\pi(t)\\ B\neq B_r}} \Phi(w_B)\right)
        \\ &=& \sum_{t\in \ST(n)} (-1)^{i(t)-1} \Big(w_{B_r} - \Phi\big(w_{B_r}\big)\uno\Big)\prod_{\substack{B\in\pi(t)\\ B\neq B_r}} \Phi(w_B),
    \end{eqnarray*}
where we have used that $\Phi$ is multiplicative in the third equality.
\end{proof}

\subsection*{Acknowledgments} Adrián Celestino is supported by the Austrian Science Fund (FWF) grant I 6232-N (WEAVE). Yannic Vargas is supported
by the Austrian Science Fund (FWF) grant I 5788.

\bibliographystyle{alpha}
%\nocite{*}
\bibliography{schroeder}

\end{document}